\theoremstyle{plain}
\newtheorem{theorem}{Theorem}[section]
\newtheorem{lemma}[theorem]{Lemma}
\newtheorem{proposition}[theorem]{Proposition}
\newtheorem{assumption}[theorem]{Assumption}
\theoremstyle{definition}
\newtheorem{definition}[theorem]{Definition}
\theoremstyle{remark}
\newtheorem{remark}[theorem]{Remark}
\newtheorem{example}[theorem]{Example}
\newcommand{\Z}{\mathbb{Z}}
\newcommand{\R}{\mathbb{R}}
\newcommand{\C}{\mathbb{C}}
\newcommand{\T}{\mathbb{T}}
\newcommand{\D}{\mathbb{D}}
\newcommand{\SSS}{\mathbb{S}}
\newcommand{\cpt}{\mathrm{cpt}}
\newcommand{\pt}{\mathrm{pt}}
\newcommand{\id}{\mathrm{id}}
\newcommand{\diag}{\mathrm{diag}}
\newcommand{\tT}{\widetilde{\T}}
\newcommand{\tS}{\widetilde{\SSS}}
\newcommand{\tH}{\widetilde{H}}
\newcommand{\thh}{\tilde{h}}
\newcommand{\CP}{\mathbb{C}\mathrm{P}}
\newcommand{\uC}{\underline{\C}}
\newcommand{\Kar}{\mathrm{Kar}}
\newcommand{\Bloch}{E_B}
\newcommand{\eBloch}{\widetilde{E}_B}
\DeclareMathOperator{\ind}{\mathrm{index}}
\renewcommand{\sf}{\mathrm{sf}}
\DeclareMathOperator{\Ker}{\mathrm{Ker}}
\DeclareMathOperator{\Ad}{\mathrm{Ad}}
\begin{document}
\title[Index theory and second-order bulk-boundary correspondence]
{Index theory and bulk-boundary correspondence for inversion-symmetric second-order topological insulators}

\author[S. Hayashi]{Shin Hayashi}
\address{Department of Mathematical Sciences, Aoyama Gakuin University, 5-10-1 Fuchinobe, Chuo, Sagamihara, Kanagawa 252-5258, Japan}
\address{Advanced Institute for Materials Research, Tohoku University, 2-1-1 Katahira, Aoba, Sendai 980-8577 Japan}
\address{RIKEN Center for Interdisciplinary Theoretical and Mathematical Sciences(iTHEMS), RIKEN, Wako 351-0198, Japan}

\email{{\tt shin.hayashi@math.aoyama.ac.jp}}
\keywords{$K$-theory and index theory, Toeplitz operators on discrete quarter planes, Higher-order topological insulators}
\subjclass[2020]{Primary 19K56; Secondary 15A23, 47B35, 81V99.}

\begin{abstract}
We provide an index-theoretic proof of the bulk-boundary correspondence for two- and three-dimensional second-order topological insulators that preserve inversion symmetry, which are modeled as rectangles and rectangular prism-shaped systems.
Our method uses extensions of the symbols of some Toeplitz operators on discrete quarter planes and computations of topological equivariant $K$-theory groups.
\end{abstract}

\maketitle
\setcounter{tocdepth}{1}
\tableofcontents

\section{Introduction}
In this paper, we discuss the bulk-boundary correspondence for some topological crystalline insulators.
Focusing on inversion-symmetric second-order topological insulators, we provide an index-theoretic proof for the correspondence.

The bulk-boundary correspondence is a general principle that is investigated in condensed matter physics.
For (first-order) topological insulators, topological codimension-one boundary states (edge states and surface states) appear corresponding to some gapped bulk topological invariants \cite{Hat93b,KRSB02,GP13,PSB16}.
Higher-order topological insulators (HOTIs) have higher codimensional boundary states (corner states and hinge states), and the bulk-boundary correspondence is much more generalized as relationships between gapped topological invariants and higher codimensional boundary states \cite{BBH17a,Khalaf18a,Schindler18}.
The codimension corresponds to the order, and our focus on second-order phases leads to discussions on two-dimensional systems with corners and three-dimensional systems with hinges.
HOTIs are divided into two classes \cite{Geier18} in accordance with the dependence of the topology characterizing them on boundary conditions (or lattice terminations).
{\em Extrinsic} HOTIs depend on boundary conditions.
{\em Intrinsic} HOTIs are those that are independent of the imposed boundary conditions and characterized by their bulk properties.
For intrinsic HOTIs, it is well known that some spatial symmetry (as point group symmetry) is needed.
Expanding the studies on topological crystalline insulators \cite{FK07,HPB11,SS14}, many results have been obtained, including their classification \cite{Geier18,Khalaf18b}, the topological invariants characterizing them \cite{BBH17a,Khalaf18a,KSS20}, and the correspondence between topological invariants and higher codimensional boundary states \cite{LB19,TTM20a,TTM20b}.

Mathematical studies of topological insulators were initiated by Bellissard \cite{BvES94}, and Kellendonk--Richter--Schultz-Baldes provided a proof of the bulk-boundary correspondence based on index theory for Toepltiz operators \cite{KRSB02}.
$K$-theory is employed to classify topological insulators \cite{Kit09}, and it has been widely expanded to $K$-theoretic studies of topological crystalline insulators \cite{SS14,Ku15,SSG17,GT19a,OSS19,GKT21}.
Regarding the bulk-boundary correspondence of topological crystalline insulators, Gomi--Thiang conducted an index-theoretic study under glide reflection symmetry \cite{GT19b}.
The bulk-interface correspondence for one-dimensional inversion-symmetric topological insulators was investigated by Thiang--Zhang \cite{TZ23}.
The topological aspects of higher codimensional boundary states were studied in \cite{Hayashi2} based on index theory for Toeplitz operators on discrete quarter planes \cite{Sim67,DH71,CDS72,Dud77b,Pa90}.
In \cite{Hayashi3}, this index theoretic approach was applied to the Benalcazar--Bernevig--Hughes model of higher-order topological insulators \cite{BBH17a}, which involves a single corner and does not require point group symmetry.
The topological invariants introduced there depend on boundary conditions and therefore correspond to extrinsic HOTIs.
By expanding the above method within the 10-fold way classification scheme, an index theory-based classification table for topological corner states was proposed in \cite{Hayashi4}, and the results were consistent with the classification table for extrinsic HOTIs in \cite{Geier18}.
Regarding the index-theoretic approach to intrinsic HOTIs, point group symmetry should be incorporated into the framework.
Such a framework was established by Ojito--Prodan--Stoiber \cite{OPS24a}, where the higher-order bulk-boundary map relating bulk invariants and higher codimensional boundary modes was derived via a spectral sequence argument.
Its nontriviality shows the higher-order bulk-boundary correspondence, which is provided by their space-adiabatic method \cite{OPS24b}.
They confirmed the correspondence across various classes, including three-dimensional inversion-symmetric second-order topological insulators \cite{OPS24a}.

In this paper, we present an alternative approach for the bulk-boundary correspondence of inversion-symmetric second-order topological insulators.
We consider both two- and three-dimensional systems modeled on rectangles and rectangular prisms.
In the two-dimensional case, we assume the presence of chiral symmetry whose symmetry operator anticommutes with the inversion symmetry operator.
In these classes, the classification of HOTIs is known to be $\Z/2$ in \cite{Geier18,Khalaf18b},
and their bulk-gapped topological invariants are provided by symmetry-based indicators \cite{Po17,Khalaf18a,MW18,OW18,OTY21,SO21}.
For the bulk-boundary correspondence of inversion-symmetric second-order topological insulators, we follow the formulation presented in \cite{TTM20a,TTM20b}, and we provide an alternative index-theoretic proof (Theorems~\ref{theorem3D} and \ref{theorem2D}).
For the proof, we expand the discussion in \cite{Hayashi5}, which utilized extensions of symbols of Toeplitz operators on discrete quarter planes (in other words, extensions of bulk Hamiltonians).
As in \cite{Hayashi5}, the key ingredient is the Wiener--Hopf factorization for matrix-valued functions, as developed by Gohberg, Kre{\u \i}n \cite{GK58r,CG81,GKS03}.
Note that the interior of the unit circle on which we extend the symbols was discussed by Graf--Porta in their study of first-order bulk-boundary correspondences \cite{GP13}, which inspires \cite{Hayashi5}.
We also note that, for inversion-symmetric topological insulators, detailed studies were conducted in \cite{HPB11}.
We model rectangles and rectangular prism-shaped systems, although we employ infinite lattices to discuss corners or hinges, as in \cite{Hayashi2}.
To incorporate the inversion symmetry that interchanges different corners or hinges within the framework, we follow \cite{OPS24a} and consider several infinite lattices simultaneously.

In three-dimensional systems, for example, we consider translation-invariant single-particle Hamiltonians on the lattice $\Z^3$ that preserve inversion symmetry.
We assume that the bulk Hamiltonian on the Brillouin torus and our models for four surfaces are gapped at zero energy (Assumptions~\ref{assumption3D} and \ref{assumption2D}).
Under this setup, as in \cite{Hayashi5}, we see that the bulk Hamiltonian on the three-dimensional torus canonically extends to some space preserving the gap.
This extended Hamiltonian preserves an extension of the inversion symmetry and provides an element of some $\Z/2$-equivariant topological $K$-theory group.
We next relate this $K$-class invariant to the symmetry-based indicator and to the number of chiral hinge states for the four hinges of a rectangular prism.
The discussion is based on calculations of equivariant $K$-groups and the maps between them, which leads to the second-order bulk-boundary correspondence.
For two-dimensional cases, we use $K_\pm$-theory \cite{Witten98,AH04} and prove the correspondence in a similar manner.
The two-dimensional and three-dimensional cases are related through the Thom isomorphism.
We start from the three-dimensional case, and the two-dimensional case follows from the relation between the $K_{\Z/2}$- and $K_\pm$-theories.
For the computations of equivariant $K$-theory groups and the maps between them, we mainly follow Gomi \cite{Gomi15}.

One advantage of our approach is the elementary nature of our discussion.
We extend bulk Hamiltonians by using edge- or surface-gapped conditions and take their $K$-class in the {\em topological} equivariant $K$-theory group.
Correspondingly, our computations of $K$-groups and the maps between them are conducted within topological $K$-theory rather than that for $C^*$-algebras.
While computational difficulties persist in applications involving explicit models, as in \cite{Hayashi5}, the Wiener--Hopf factorization offers a strategy for verifying the edge- or surface-gapped condition for explicit model Hamiltonians.
Following this method, nontrivial (mathematical) examples are presented in Sect.~\ref{Sect5}.
Compared with the $C^*$-algebraic framework of Ojito--Prodan--Stoiber \cite{OPS24a}, our discussion is restricted: our models for codimension-one boundaries (edges and surfaces) are compressions of bulk Hamiltonians, and perturbations around the boundary are not allowed.
We also note that the shapes of the systems discussed in this paper (rectangles and rectangular prisms) are restricted in comparison with the general principle of the bulk-boundary correspondence for intrinsic HOTIs in condensed matter physics, and the result is expected to be expanded further to systems with various shapes, such as parallelograms.
These restrictions originate from the reliance on the index formula presented in \cite{Hayashi5}.
Although the relationship between the $C^*$-algebraic framework \cite{OPS24a} and our approach is not clear, in place of the higher differentials of the spectral sequence in \cite{OPS24a}, we lift the $K$-class invariants for both the bulk and corner or hinge invariants in topological $K$-groups.
Regarding the contact between index theory for quarter-plane Toeplitz operators and a topic in noncommutative geometry,
we note that the pullback $C^*$-algebra containing the pair of half-plane Toeplitz operators with the same symbol in \cite{Pa90} is known as the algebra of a quantum three-sphere \cite{BHMS05}.

This paper is organized as follows.
In Sect.~\ref{Sect2}, we collect the necessary results and notations used in this paper.
In Sect.~\ref{Sect3}, we consider three-dimensional systems.
Following \cite{TTM20a}, we formulate the second-order bulk-boundary correspondence for inversion-symmetric topological insulators and provide its proof.
Two-dimensional systems are discussed in Sect.~\ref{Sect4}.
Since the discussion in Sect.~\ref{Sect4} relies on that presented in Sect.~\ref{Sect3}, some results and notations used in Sect.~\ref{Sect4} are also included in Sect.~\ref{Sect3}.
Examples are contained in Sect.~\ref{Sect5}.

\section{Preliminaries}
\label{Sect2}
In this section, we collect the necessary results and notations that are used in this paper.
One more set of preliminaries from \cite{Gomi15} will be required and is contained in Sect.~\ref{Sect3}.

\subsection{Toeplitz operators}
\label{Sect2.1}
Throughout this paper, we write $\T$ for the unit circle in the complex plane and
$\Z/n$ for the cyclic group of order $n$ (written additively).
Let $f \colon \T \to \C$ be a continuous map.
We consider the multiplication operator $M_f$ on $l^2(\Z)$ generated by $f$.
Let $\Z_{\geq 0}$ (resp. $\Z_{\leq 0})$ be the set of nonnegative (resp. nonpositive) integers.
Let $T^+_f $ be the compression of the multiplication operator $M_f$ onto $l^2(\Z_{\geq 0})$, which is the operator defined by $T^+_f \varphi = P_{\geq 0} M_f \varphi$, where $\varphi$ in $l^2(\Z_{\geq 0})$ and $P_{\geq 0}$ denotes the orthogonal projection of $l^2(\Z)$ onto its closed subspace $l^2(\Z_{\geq 0})$.
Similarly, we write $T^-_f$ for the compression of $M_f$ onto $l^2(\Z_{\leq 0})$.
They are called {\em Toeplitz operators}.
Let $f \colon \T^2 \to \C$ be a continuous map and $M_f$ be the multiplication operator on $l^2(\Z^2)$ generated by $f$.
Let $T^1_f$, $T^2_f$, $T^3_f$ and $T^4_f$ be the compressions of $M_f$ onto $l^2(\Z_{\geq 0} \times \Z)$, $l^2(\Z \times \Z_{\geq 0})$, $l^2(\Z_{\leq 0} \times \Z)$ and $l^2(\Z \times \Z_{\leq 0})$, respectively.
They are called {\em half-plane Toeplitz operators}.
We also consider the compressions onto $l^2(\Z_{\geq 0} \times \Z_{\geq 0})$, $l^2(\Z_{\leq 0} \times \Z_{\geq 0})$, $l^2(\Z_{\leq 0} \times \Z_{\leq 0})$ and $l^2(\Z_{\geq 0} \times \Z_{\leq 0})$, which we denote as $T^a_f$, $T^b_f$, $T^c_f$ and $T^d_f$, respectively. They are called {\em quarter-plane Toeplitz operators}.
In what follows, we may consider systems of these operators but simply call them (half-plane, quarter-plane) Toeplitz operators rather than systems of them for simplicity.
According to Douglas--Howe \cite{DH71}, the quarter-plane Toeplitz operator $T_f^a$ is Fredholm if and only if the half-plane Toeplitz operators $T^1_f$ and $T^2_f$ are invertible.
Note that the intersection of two discrete half planes $\Z_{\geq 0} \times \Z$ and $\Z \times \Z_{\geq 0}$ is the discrete quarter plane $\Z_{\geq 0} \times \Z_{\geq 0}$.
Similarly, the quarter-plane Toeplitz operators $T^b_f$, $T^c_f$ and $T^d_f$ are Fredholm if and only if the half-plane Toeplitz operators $(T^2_f, T^3_f)$, $(T^3_f, T^4_f)$ and $(T^4_f, T^1_f)$ are invertible, respectively.

In what follows, although we discuss operators on infinite lattices, we model systems with four edges/surfaces (codimension-one boundaries) and four corners/hinges (codimension-two boundaries), and their numbering schemes are shown in Fig.~\ref{Fig1}.
\begin{figure}
  \centering
  \includegraphics[width=12cm]{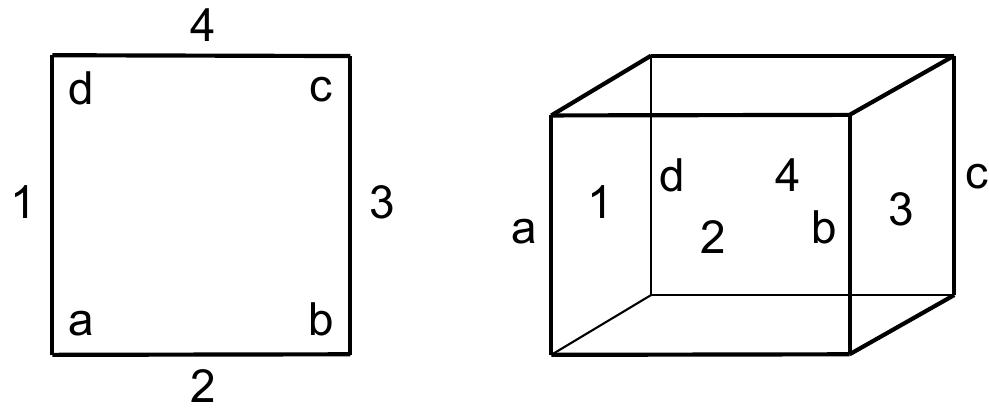}
  \caption{Labeling schemes for four edges and four corners of rectangles (left) and four surfaces and four hinges of rectangular prisms (right)}
    \label{Fig1}
\end{figure}

\subsection{Wiener--Hopf factorizations}
\label{Sect2.2}
We next introduce factorizations of matrix-valued functions called {\em Wiener--Hopf factorizations}.
For the factorizations, we refer the reader to \cite{GK58r,CG81,GKS03}.

Let $D_+ = \{ z \in \C \mid \lvert z \lvert \ < 1\}$ and $D_- = \{ z \in \C \mid \lvert z \lvert \ > 1 \} \cup \{ \infty\}$, which are open disks.
We write $\D_\pm = \T \cup D_\pm$ (the double sign corresponds) for closed disks whose union is the Riemann sphere $\SSS^2 = \C \cup \{ \infty \}$.
For a nonsingular rational matrix function $f \colon \T \to GL(n,\C)$ with poles off $\T$, the following decomposition exists (for the associated algorithm, see \cite{GKS03}):
\begin{equation}
\label{rightWH}
	f = f_- \Lambda f_+,
\end{equation}
where $f_\pm$ and $\Lambda$ are continuous maps $\T \to GL(n,\C)$ satisfying the following.
(a) $\Lambda$ is the diagonal matrix-valued function of the form $\Lambda(z) = \diag(z^{\kappa_1}, \ldots, z^{\kappa_n})$, where $\kappa_1 \geq \ldots \geq \kappa_n$ are nonincreasing sequences of integers called {\em right partial indices}.
(b) $f_+$ admits a continuous extension onto $\D_+$ that is holomorphic on $D_+$ as a nonsingular matrix-valued function.
(c) $f_-$ admits a continuous extension onto $\D_-$ that is holomorphic on $D_-$ as a nonsingular matrix-valued function\footnote{In \cite{PS86}, a similar factorization is referred to as the {\em Birkhoff factorization}.}.
A similar factorization also exists, but the placement of $f_\pm$ is exchanged:
\begin{equation}
\label{leftWH}
	f = f_+ \Lambda f_-.
\end{equation}
We call (\ref{rightWH}) the {\em right} Wiener--Hopf factorization and (\ref{leftWH}) the {\em left} Wiener--Hopf factorization.
The nonincreasing sequences of integers $\kappa_1 \geq \ldots \geq \kappa_n$ in (\ref{leftWH}) are called {\em left} partial indices.
Note that partial indices are independent of the choice of Wiener--Hopf factorization and depend only on the matrix-valued function $f$.
In both the right and left factorizations, when the partial indices are trivial, they are called {\em canonical} factorizations.
For both the right and left Wiener--Hopf factorizations, we use $f^e_+$ and $f^e_-$ to denote the extensions of $f_+$ and $f_-$ in (b) and (c), respectively.
They are continuous maps $f^e_\pm \colon \D_\pm \to GL(n,\C)$, where the double sign corresponds.
There are the following relations between Toeplitz operators and Wiener--Hopf factorizations \cite{CG81}.
\begin{itemize}
\item There exists a right canonical factorization $f = f_- f_+$ if and only if the Toeplitz operator $T^+_f$ on $l^2(\Z_{\geq 0}, \C^n)$ is invertible.
\item There exists a left canonical factorization $f = f_+ f_-$ if and only if the Toeplitz operator $T^-_f$ on $l^2(\Z_{\leq 0}, \C^n)$ is invertible.
\end{itemize}

Index formulas for quarter-plane Toeplitz operators were developed by Coburn--Douglas--Singer \cite{CDS72}, Dudu\v cava \cite{Dud77b}, Park\cite{Pa90}.
Here, we investigate another formula \cite{Hayashi5} that employs extensions of the symbols of quarter-plane Toeplitz operators obtained by using Wiener--Hopf factorizations.
We expand it to an equivariant setup.
According to \cite{Hayashi5}, when $T^+_f$ is invertible, we extend the nonsingular rational matrix function $f$ on $\T$ continuously to $\D_+$ by using a right canonical factorization as follows:
\begin{equation*}
	f^e(z) = f^e_-(\bar{z}^{-1})f^e_+(z),
\end{equation*}
where $z \in \D_+$.
This $f^e$ is a nonsingular matrix-valued function that is independent of the choice of factorization \cite[Lemma~3.1]{Hayashi5}.
Similarly, when $T^-_f$ is invertible, we extend $f$ to $\D_-$ by using a left canonical factorization as follows:
\begin{equation*}
	f^e(z) = f^e_+(\bar{z}^{-1})f^e_-(z),
\end{equation*}
where $z \in \D_-$.
This $f^e$ is also a continuous nonsingular matrix-valued function and is independent of the choice of left canonical factorization, which can be shown as in \cite[Lemma~3.1]{Hayashi5}.
Combining these results, the following lemma is obtained as in \cite[Lemma~3.3]{Hayashi5}.

\begin{lemma}
\label{extension1}
Let $Y$ be a topological space.
Let $f \colon \T \times Y \to GL(n,\C)$ be a continuous map such that for each $y$ in $Y$, $f(y)$ is a rational matrix function on $\T$ with trivial left and right partial indices.
Through a Wiener--Hopf factorization, there canonically associates a continuous map $f^e \colon \SSS^2 \times Y \to GL(n,\C)$ that extends $f$.
\end{lemma}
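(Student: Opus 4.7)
The plan is to build $f^e$ separately on $\D_+ \times Y$ and $\D_- \times Y$ using the right and left canonical factorizations respectively, to check that the two pieces agree on the overlap $\T \times Y$ (where both restrict to $f$), and then to verify joint continuity in $(z,y)$. For each fixed $y \in Y$, the triviality of the right partial indices yields a right canonical factorization $f(\cdot, y) = f_-(\cdot, y) f_+(\cdot, y)$, and setting
\begin{equation*}
    f^e(z, y) = f^e_-(\bar z^{-1}, y)\, f^e_+(z, y), \qquad z \in \D_+,
\end{equation*}
defines a nonsingular matrix-valued extension to $\D_+$. The pointwise statement recorded in \cite[Lemma~3.1]{Hayashi5} guarantees this is independent of the choice of right canonical factorization. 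Symmetrically, the triviality of the left partial indices produces an extension to $\D_-$, again independent of choices, and both extensions restrict to $f(\cdot,y)$ on $\T$. Gluing along $\T$ defines $f^e$ as a map of sets $\SSS^2 \times Y \to GL(n,\C)$ extending $f$.

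The substantive step is joint continuity. Fix $y_0 \in Y$. Invertibility of $T^+_{f(y_0)}$ is equivalent to the triviality of the right partial indices of $f(\cdot,y_0)$, and since $y \mapsto T^+_{f(y)}$ is norm-continuous and the invertible operators form an open set, invertibility persists on a neighborhood $U$ of $y_0$; the same holds for $T^-_{f(y)}$. On $U$, I would extract a continuous local choice of right canonical factors by fixing a normalization — for example, requiring $f^e_+(0, y) = I$, which eliminates the residual constant-matrix ambiguity $(f_-, f_+) \mapsto (f_- c, c^{-1} f_+)$ — and then recovering $f_+(\cdot, y)$, and hence $f_-(\cdot, y) = f(\cdot, y)\, f_+(\cdot, y)^{-1}$, from $f$ and the inverse of $T^+_{f(y)}$. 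This makes $y \mapsto f^e_\pm(\cdot, y)$ continuous into continuous matrix functions on $\D_\pm$, and plugging into the formula above gives continuity of $f^e$ on $\D_+ \times U$. The analogous argument on the $\D_-$ side, together with the agreement of the two pieces with $f$ on $\T \times U$, yields joint continuity of $f^e$ on $\SSS^2 \times U$. Since $y_0$ is arbitrary, $f^e$ is continuous globally.

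The main obstacle is precisely this local continuous selection of canonical factors: the factorization $f = f_- f_+$ is not unique, so one cannot simply invoke pointwise existence to get a continuous family. Either one normalizes the factorization and then appeals to the continuous dependence of $(T^+_{f(y)})^{-1}$ on $y$, or, more abstractly, constructs $f_\pm$ by a Szeg\H{o}/Riesz-type projection that manifestly depends continuously on the parameter. Once local continuity is secured, the canonical (choice-independent) character of the pointwise extension ensures that local definitions glue unambiguously, and the lemma follows.
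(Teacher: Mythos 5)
Your construction of $f^e$ --- right canonical factorization on $\D_+$, left canonical factorization on $\D_-$, independence of the choice of factorization via the residual constant-matrix ambiguity, then gluing along $\T$ --- is exactly the paper's setup, and you correctly isolate joint continuity of $f^e$ on $\SSS^2 \times Y$ as the only substantive point. Note that the paper does not actually prove that step here: it is deferred wholesale to \cite[Lemma~3.3]{Hayashi5}.

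The gap is in your continuity argument. Applying $(T^+_{f(y)})^{-1}$ to constant vectors recovers the Fourier coefficients of (a normalization of) $f_+(\cdot,y)^{-1}$, so norm-continuity of $y \mapsto (T^+_{f(y)})^{-1}$ gives continuity of the factors only in the $H^2$ topology ($l^2$ of Fourier coefficients). That yields joint continuity of $f^e$ on the open sets $D_\pm \times Y$, where the power series converge locally uniformly, but says nothing at the boundary circle $\T \subset \D_\pm$, which is where the content of the lemma lies. The stronger statement you assert --- that $y \mapsto f^e_\pm(\cdot,y)$ is continuous into $C(\D_\pm, GL(n,\C))$ with the sup norm --- does not follow and is false in general: the canonical factors do not depend on the symbol continuously in the uniform norm, because the Riesz projection (equivalently, the conjugate-function operator) is unbounded on $C(\T)$. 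Already for scalar symbols $f = e^u$ with $u$ real, one finds $f^e(re^{i\theta}) = \exp\bigl((P_r * u)(e^{i\theta})\bigr)$, which is manifestly jointly continuous since $\lVert P_r * u \rVert_\infty \leq \lVert u \rVert_\infty$, whereas the individual factors $f_\pm$ carry phases governed by the harmonic conjugate of $u$, which is not controlled by $\lVert u\rVert_\infty$; the continuity of $f^e$ up to $\T$ survives only because these phases cancel in the product $f^e_-(\bar z^{-1})f^e_+(z)$. So your route (a) proves too little, and your route (b), a ``Szeg\H{o}/Riesz-type projection that manifestly depends continuously on the parameter,'' does not exist in the topology you need. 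A correct argument must either exploit the cancellation directly (e.g.\ write $f(\cdot,y) = f_-(\cdot,y_0)\,g(\cdot,y)\,f_+(\cdot,y_0)$ with $g \to 1$ uniformly and control $g^e$ near the identity) or work in a decomposing Banach algebra such as the Wiener algebra, where the factorization is norm-continuous in the symbol --- the latter being what covers the paper's actual applications, in which $f$ is a single multivariable rational matrix function and the parameter dependence is far better than merely sup-norm continuous.
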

Note that if $f$ further takes values in Hermitian matrices, its extension $f^e$ is also a Hermitian matrix-valued function \cite[Lemma~3.5]{Hayashi5}.

We next consider these matrix-valued functions that preserve some symmetry.
Let $\Pi$ and $I$ be $n$-by-$n$ Hermitian unitary matrices.

\begin{lemma}
\label{extension2}
Let $f \colon \T \to GL(n,\C)$ be a rational matrix function with poles off $\T$ of trivial left and right partial indices.
If $f(z)$ anticommutes with $\Pi$ for each $z$ in $\T$, then $f^e(z)$ anticommutes with $\Pi$ for each $z$ in $\SSS^2$.
\end{lemma}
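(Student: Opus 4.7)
The plan is to exploit the fact that conjugation by the constant Hermitian unitary $\Pi$ sends a canonical Wiener--Hopf factorization of $f$ to another canonical Wiener--Hopf factorization of $f$, and then to invoke the independence of $f^e$ on the choice of canonical factorization, as recorded in the discussion preceding Lemma~\ref{extension1} (cf.\ \cite[Lemma~3.1]{Hayashi5}).

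On $\D_+$, I would fix a right canonical factorization $f = f_- f_+$. The anticommutation $\Pi f(z) = -f(z) \Pi$ on $\T$, together with $\Pi^2 = 1$, rewrites as
\begin{equation*}
f = -\Pi f \Pi = -(\Pi f_- \Pi)(\Pi f_+ \Pi) = (-\Pi f_- \Pi)(\Pi f_+ \Pi).
\end{equation*}
Setting $g_-(z) := -\Pi f_-(z) \Pi$ and $g_+(z) := \Pi f_+(z) \Pi$, I would observe that $g_+$ extends continuously to $\D_+$ and holomorphically on $D_+$ as $\Pi f_+^e \Pi$, with values in $GL(n,\C)$; similarly $g_-$ extends to $\D_-$. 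Hence $f = g_- g_+$ is another right canonical factorization. Computing $f^e$ via this factorization and invoking independence yields
\begin{equation*}
f^e(z) = g_-^e(\bar{z}^{-1}) g_+^e(z) = -\Pi f_-^e(\bar{z}^{-1}) f_+^e(z) \Pi = -\Pi f^e(z) \Pi
\end{equation*}
for $z \in \D_+$, which is exactly the claimed anticommutation there.

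The same line of argument applies on $\D_-$: starting from a left canonical factorization $f = f_+ f_-$, the pair $g'_+ := \Pi f_+ \Pi$ and $g'_- := -\Pi f_- \Pi$ is another left canonical factorization, and the corresponding formula $f^e(z) = (g'_+)^e(\bar{z}^{-1}) (g'_-)^e(z) = -\Pi f^e(z) \Pi$ gives the anticommutation on $\D_-$. Since $\SSS^2 = \D_+ \cup \D_-$, this finishes the proof. The only real obstacle is bookkeeping the minus sign---the anticommutation forces exactly one sign change somewhere in each factorization---but conjugation by the constant invertible matrix $\Pi$ preserves holomorphy, invertibility, and the domain conditions defining a canonical factorization, so every other step is automatic.
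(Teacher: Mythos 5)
Your proposal is correct and matches the paper's own proof essentially verbatim: both arguments conjugate a canonical Wiener--Hopf factorization by $\Pi$ (absorbing the sign into one factor), observe that the result is again a canonical factorization with the conjugated extensions, and invoke the independence of $f^e$ from the choice of factorization on each of $\D_+$ and $\D_-$. The only cosmetic difference is that you write $\Pi$ where the paper writes $\Pi^*$, which is immaterial since $\Pi$ is Hermitian unitary.
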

\begin{proof}
By assumption, there exists a right canonical factorization $f = f_- f_+$.
Since $f$ anticommutes with $\Pi$, we have,
\begin{equation*}
	f(z) = -\Pi f(z) \Pi^* = - \Pi f_-(z) \Pi^* \cdot \Pi f_+(z) \Pi^*,
\end{equation*}
where $z \in \T$.
This also is a right canonical factorization since the left and right factors are extended onto $\D_-$ and $\D_+$ by $-\Pi f^e_-(z)\Pi^*$ and $\Pi f^e_+(z) \Pi^*$, respectively.
Since the extension $f^e$ onto $\D_+$ is independent of the choice of right canonical factorization, the following holds for $z$ in $\D_+$:
\begin{equation*}
	f^e(z) = -\Pi f^e_-(\bar{z}^{-1})\Pi^* \cdot \Pi f^e_+(z) \Pi^* = -\Pi f^e(z) \Pi^*.
\end{equation*}
The relation on $\D_-$ follows similarly using left canonical factorization.
\end{proof}

In what follows, we write $\tT$ for the unit circle $\T$ equipped with the involution given by the complex conjugation.
Its $n$-fold product is denoted as $\tT^n$.
Let $\tS^2$ be the Riemann sphere $\SSS^2 = \C \cup \{ \infty \}$ equipped with the involution given by $z \mapsto z^{-1}$, where we set $0^{-1} = \infty$ and $\infty^{-1}=0$.
Note that $\tT$ is a $\Z/2$-subspace of $\tS^2$.

\begin{lemma}
\label{extension3}
Let $Y$ be a $\Z/2$-space. We write $\tau$ for the involution on $Y$.
Let $f \colon \T \times Y \to GL(n,\C)$ be a continuous map such that, for each $y$ in $Y$, $f(y)$ is a rational matrix function on $\T$ with trivial left and right partial indices.
If $f \colon \tT \times Y \to (GL(n,\C), \Ad_I)$ is a $\Z/2$-map; that is, $f$ satisfies
\begin{equation}
\label{lemsym}
	I f(z,y) I^* = f(\bar{z}, \tau(y)),
\end{equation}
for any $(z,y)$ in $\tT \times Y$,
then the extension $f^e$ in Lemma~\ref{extension1} is a $\Z/2$-map $f^e \colon \tS^2 \times Y \to (GL(n,\C), \Ad_I)$; that is, $f^e$
satisfies
\begin{equation*}
	I f^e(z,y) I^* = f^e(z^{-1}, \tau(y)),
\end{equation*}
for any $(z,y)$ in $\tS^2 \times Y$.
\end{lemma}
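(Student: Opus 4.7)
The plan is to mimic the proof of Lemma~\ref{extension2}, exploiting the fact that the extension $f^e$ is canonically determined by $f$ (independent of the choice of Wiener--Hopf factorization). The new feature, compared with Lemma~\ref{extension2}, is that the involution on $\tS^2$ is $z \mapsto z^{-1}$, which exchanges $\D_+$ and $\D_-$; thus the equivariance hypothesis should convert a \emph{right} canonical factorization of $f(\cdot, y)$ into a \emph{left} canonical factorization of $f(\cdot, \tau(y))$, and vice versa.

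Fix $y \in Y$ and choose a right canonical factorization $f(\cdot, y) = f_-(\cdot, y) f_+(\cdot, y)$ with holomorphic extensions $f_\pm^e$ on $\D_\pm$. On $\T$ we have $w^{-1} = \bar w$, so the hypothesis (\ref{lemsym}) rewrites as
\[
f(w, \tau(y)) = I f(w^{-1}, y) I^* = [I f_-(w^{-1}, y) I^*] \cdot [I f_+(w^{-1}, y) I^*].
\]
Since $w \mapsto w^{-1}$ is holomorphic and sends $\D_\pm$ biholomorphically to $\D_\mp$, the first factor admits a holomorphic extension $w \mapsto I f_-^e(w^{-1}, y) I^*$ to $\D_+$, and the second extends holomorphically to $\D_-$ in the same manner. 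This display is therefore a left canonical factorization of $f(\cdot, \tau(y))$.

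I next apply the $\D_-$-extension formula for left canonical factorizations to this new factorization and evaluate at $w = z^{-1}$ for $z \in \D_+$ (so $\bar w = \bar z^{-1}$ and $w^{-1} = z$), obtaining
\[
f^e(z^{-1}, \tau(y)) = [I f_-^e(\bar z^{-1}, y) I^*] \cdot [I f_+^e(z, y) I^*].
\]
On the other hand, the $\D_+$-extension formula applied to the original right canonical factorization gives
\[
I f^e(z, y) I^* = I [f_-^e(\bar z^{-1}, y) f_+^e(z, y)] I^* = [I f_-^e(\bar z^{-1}, y) I^*] \cdot [I f_+^e(z, y) I^*],
\]
so the two expressions agree on $\D_+$. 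A symmetric argument, starting from a left canonical factorization of $f(\cdot, y)$ and producing a right canonical factorization of $f(\cdot, \tau(y))$, handles $z \in \D_-$; on $\T$ the identity is exactly hypothesis (\ref{lemsym}). Since $f^e$ is continuous on $\SSS^2 \times Y$ by Lemma~\ref{extension1}, these pointwise-in-$y$ identities assemble into the claimed $\Z/2$-equivariance of $f^e$ on $\tS^2 \times Y$.

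The main subtlety is recognizing the swap between right and left canonical factorizations induced by the involution: one must note that the holomorphy-preserving map is $w \mapsto w^{-1}$ (not the anti-holomorphic $w \mapsto \bar w$), and that composing with it reverses the roles of the $+$ and $-$ factors, so the left canonical extension formula on $\D_-$ for $f(\cdot, \tau(y))$ matches the $\Ad_I$-conjugate of the right canonical extension formula on $\D_+$ for $f(\cdot, y)$ when the argument is replaced by $z^{-1}$. Once this bookkeeping is clear, the remainder reduces to a termwise comparison as above.
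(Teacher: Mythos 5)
Your proposal is correct and follows essentially the same route as the paper: the hypothesis converts a right canonical factorization of $f(\cdot,y)$ into a left canonical factorization of $f(\cdot,\tau(y))$ (via $w\mapsto w^{-1}$ swapping $\D_+$ and $\D_-$), and comparing the resulting extension formulas, together with the independence of $f^e$ from the choice of factorization, yields the equivariance. The paper phrases the final identity for $z\in\D_-$ rather than $z\in\D_+$, but this is only a change of variables.
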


\begin{proof}
Let $y$ be an element in $Y$.
By assumption, the matrix-valued function $f(y)$ on $\T$ has a right canonical factorization $f(z,y) = f_-(z,y)f_+(z,y)$.
Combined with the symmetry (\ref{lemsym}), we also have the following factorization of $f(\tau(y))$:
\begin{equation*}
	f(z,\tau(y)) = I f(\bar{z}, y)I^*
		= I f_-(z^{-1}, y)I^* \cdot I f_+(z^{-1}, y) I^*,
\end{equation*}
where $z \in \T$.
This is a {\em left} canonical factorization since the left and right factors are extended onto $\D_+$ and $\D_-$ by $I f_-^e(z^{-1}, y)I^*$ and $I f_+^e(z^{-1}, y)I^*$, respectively.
Therefore, for $z$ in $\D_-$, our extension satisfies
\begin{equation*}
	f^e(z,\tau(y)) = I f^e_-(\bar{z}, y)I^* \cdot I f^e_+(z^{-1}, y) I^*
		= I f^e(z^{-1}, y) I^*,
\end{equation*}
which completes the proof.
\end{proof}

\begin{remark}
\label{reminv}
In the following, $\Pi$ and $I$ appears as chiral and inversion symmetry operators, respectively. 
Inversion symmetry relates two Toeplitz operators on different discrete half lines.
Correspondingly, as we can see in the proof, the assumptions of trivial left and right partial indices in Lemma~\ref{extension3} are redundant.
Under the symmetry (\ref{lemsym}), if $f(y)$ has trivial right partial indices (equivalently, the associated Toeplitz operator $T^+_{f(y)}$ is invertible), then $f(\tau(y))$ has trivial left partial indices (the Toeplitz operator $T^-_{f(\tau(y))}$ is invertible), and vice versa.
\end{remark}

\subsection{$K^0_{\Z/2}$- and $K^1_\pm$-groups}
\label{Sect2.3}
In this subsection, we collect the necessary results concerning $K$-theory.
For the computation of $\Z/2$-equivariant $K$-groups in this paper, we define them as the Grothendieck group of the isomorphism classes of finite-rank $\Z/2$-equivariant complex vector bundles \cite{At67,Se68}.
We also introduce $K_{\Z/2}$-theory and $K_\pm$-theory in Karoubi's formulation using pairs of gradations \cite{Karoubi68, Karoubi78}.
Regarding their definitions, we employ the finite-dimensional Karoubi formulation presented in \cite{Gomi23} for this context following \cite{FM13}.
We also introduce some explicit isomorphism between the $K^1_\pm$- and $K^0_{\Z/2}$-groups which will be used in Sect.~\ref{Sect4}.
For the details of these $K$-theory groups, we refer the reader to \cite{Karoubi68,Se68,Karoubi78,AH04,Gomi15,Gomi23}.

Let $Y$ be a compact Hausdorff $\Z/2$-space.
We consider a {\em triple} $(E, \eta_0,\eta_1)$ on $Y$ consisting of a finite-rank $\Z/2$-equivariant Hermitian vector bundle $E$ on $Y$ and self-adjoint involutions $\eta_0$ and $\eta_1$ on $E$ that commute with the $\Z/2$-action.
These involutions are called {\em gradations}.
An isomorphism of triples $f \colon (E, \eta_0,\eta_1) \to (E', \eta'_0,\eta'_1)$ is given by an isomorphism of vector bundles $f \colon E \to E'$ that provides isomorphisms $f \colon (E, \eta_i) \to (E', \eta'_i)$ of graded equivariant vector bundles for $i=0,1$.
The direct sum of these triples is given by $(E, \eta_0, \eta_1) \oplus (E', \eta'_0, \eta'_1) = (E \oplus E', \eta_0 \oplus \eta'_0, \eta_1 \oplus \eta'_1)$.
Let $M_{\Z/2}(Y)$ be the monoid of the isomorphism classes of the triples and $Z_{\Z/2}(Y)$ be its submonoid consisting of the classes of the triples with the form $(E, \eta_0, \eta_1)$ such that $\eta_0$ is homotopic to $\eta_1$ as gradations.
The quotient monoid
\begin{equation*}
	K^0_{\Z/2}(Y)_{\Kar} = M_{\Z/2}(Y) / Z_{\Z/2}(Y),
\end{equation*}
is an abelian group.
We simply write $[E, \eta_0, \eta_1]$ for the class of the triple $(E, \eta_0, \eta_1)$ in $K^0_{\Z/2}(Y)_{\Kar}$.
Let $Y' \subset Y$ be a closed $\Z/2$-subspace.
The relative $K_{\Z/2}$-group $K^0_{\Z/2}(Y, Y')_\Kar$ is defined similarly by using triples $(E, \eta_0, \eta_1)$ that further satisfy the relation $\eta_0|_{Y'} = \eta_1|_{Y'}$.
Homotopies between $\eta_0$ and $\eta_1$ for taking the quotient are taken to be constant on $Y'$.
A relation with the group $K^0_{\Z/2}(Y)$ defined as the Grothendieck group of the isomorphic classes of finite-rank $\Z/2$-equivariant complex vector bundles is given by the isomorphism
\begin{equation}
\label{eq4}
	K^0_{\Z/2}(Y)_{\Kar} \to K^0_{\Z/2}(Y),
\end{equation}
which maps $[E, \eta_0, \eta_1]$ to $\Ker(1 + \eta_0) - \Ker(1 + \eta_1)$.

\begin{remark}
\label{rem2.5}
For the isomorphism (\ref{eq4}), see \cite[Chapter~III, Proposition~5.7]{Karoubi78} and \cite[Proposition~4.21]{Gomi23}, where
an isomorphism is given by assigning $\Ker(1-\eta)$ for the gradation $\eta$.
Our $\Ker(1+\eta)$ is chosen for consistency with the study of topological insulators.
For a bulk-gapped Hamiltonian $H$, the element $\eta = H|H|^{-1}$ is a gradation, and $\Ker(1+\eta)$ corresponds to the valence band.
\end{remark}

We next introduce $K_\pm$-groups.
Following \cite{Gomi23}, we define $K^1_{\pm}$-groups by using a finite-rank {\em $c$-twisted vector bundle with $Cl_{0,1}$-action}, where $c$ is the identity map $\id_{\Z/2}$ on $\Z/2$, which consists of the following data $(E, \eta, \rho, \gamma)$.
\begin{itemize}
\item $E$ is a finite-rank $\Z/2$-equivariant Hermitian vector bundle on $Y$. We write $\rho$ for the $\Z/2$-action\footnote{In what follows, since the $\Z/2$-action $\rho$ is provided by the involution $\rho(1)$ on $E$, where $1$ is the generator of the group $\Z/2$, we may simply write $\rho$ for the involution $\rho(1)$.} on $E$.
\item $\eta$ is a gradation on $E$.
\item $\gamma$ provides an action of the Clifford algebra $Cl_{0,1}$ on $E$; that is, $\gamma$ is a unitary homomorphism on $E$ (lifting the identity on $Y$) satisfying $\gamma^2=1$.
\end{itemize}
The above satisfies $\eta \rho = - \rho \eta$, $\gamma \eta = - \eta \gamma$ and $\gamma \rho = - \rho \gamma$.
We consider the triple $(E, \eta_0, \eta_1)$ such that $E = (E, \rho, \gamma)$, equipped with the gradation $\eta_i$, is a $c$-twisted vector bundle for $i=0,1$.
An isomorphism of triples $f \colon (E, \eta_0, \eta_1) \to (E', \eta'_0, \eta'_1)$ is given by an isomorphism of vector bundles $f \colon E \to E'$ that provides isomorphisms $f \colon (E, \eta_i, \rho, \gamma) \to (E', \eta'_i, \rho', \gamma')$ of the $c$-twisted vector bundles for each $i=0,1$.
Let $M_\pm(Y)$ be the monoid consisting of the isomorphism classes of such triples whose binary operation is provided by the direct sum, and let $Z_\pm(Y)$ be its submonoid consisting of the triples of homotopic gradations.
The quotient monoid
\begin{equation*}
	K^1_{\pm}(Y)_{\Kar} = M_{\pm}(Y) / Z_{\pm}(Y)
\end{equation*}
is an abelian group.
In what follows, we remove the subscript $\mathrm{Kar}$ from our notation for the $K$-groups in Karoubi's formulation, although the representatives of the $K$-groups will be clear from their context.
Note that we have the following isomorphisms \cite{Gomi15}:
\begin{equation}
\label{isom}
	K^n_{\Z/2}(Y \times \tT) \cong K^n_{\Z/2}(Y) \oplus K^{n-1}_{\pm}(Y), \ \ \
	K^n_{\pm}(Y \times \tT) \cong K^n_{\pm}(Y) \oplus K^{n-1}_{\Z/2}(Y).
\end{equation}

\begin{remark}
\label{interval}
Let $J$ be the interval $[-\frac{\pi}{2},\frac{\pi}{2}]$ equipped with the involution given by $\theta \mapsto -\theta$.
In \cite{AH04}, the group $K^n_{\pm}(Y)$ is defined as the $\Z/2$-equivariant $K$-theory group $K^{n+1}_{\Z/2}(Y \times J, Y \times \partial J)$.
For our applications to topological insulators, Karoubi's formulation enables us to provide an element of a $K^1_\pm$-group from pairs of Hamiltonians that preserve some symmetry \cite{Kit09, Thi16, Gomi23}.
\end{remark}

Finally, we briefly mention the Thom isomorphism between $K^1_{\pm}$- and $K^0_{\Z/2}$-groups \cite{AH04,Gomi15}.
This is an equivariant analog of \cite[Chapter~III, Theorem~5.10]{Karoubi78}, and the result will be well known to experts \cite{Karoubi68}.
Since the specific form of the isomorphism is required in Sect.~\ref{Sect4}, the result is briefly contained.

Let $Y$ be a compact Hausdorff $\Z/2$-space. We write its involution as $\tau$.
By using the interval $J$ equipped with the nontrivial $\Z/2$-action in Remark~\ref{interval}, let
\begin{equation}\label{t}
	t_Y \colon K^1_\pm(Y) \to K^0_{\Z/2}(Y \times J, Y \times \partial J)
\end{equation}
be the map constructed as follows.
Let $[E, \eta_0, \eta_1]$ be an element in $K^1_\pm(Y)$.
For $y \in Y$, $\theta \in J$ and $i=0,1$, let
\begin{equation*}
	\zeta_i(y,\theta) = \eta_i(y) \cos \theta - \gamma \sin \theta,
\end{equation*}
which satisfies $\zeta_i^2=1$, and the relation $\zeta_0|_{Y \times \partial J} = \zeta_1|_{Y \times \partial J}$ holds.
Let $\pi \colon Y \times J \to Y$ be the projection.
For the bundle $\pi^*E$, we consider the $\Z/2$-action given by $\chi = \sqrt{-1} \gamma \rho$.
Each $\zeta_i$ is a gradation on $\pi^*E$ and satisfies the following $\Z/2$-symmetry:
\begin{align*}
	\chi \zeta_i(y,\theta) \chi^* &= \chi \eta_i(y) \chi^* \cos \theta - \chi \gamma \chi^* \sin \theta\\
		&= \eta_i(\tau(y)) \cos \theta + \gamma \sin \theta \\
		&= \eta_i(\tau(y)) \cos (-\theta) - \gamma \sin (-\theta) \\
		&= \zeta_i(\tau(y), -\theta).
\end{align*}
We set $t_Y([E, \eta_0, \eta_1]) = [\pi^*E, \zeta_0, \zeta_1]$, which provides a natural group homomorphism.

In what follows, we may simply write $R$ for the representation ring $R(\Z/2) = \Z[t]/(t^2-1)$.
Let us consider the composite of the following maps:
\begin{equation*}
	K^1_\pm(Y) \overset{t_Y}{\longrightarrow} K^0_{\Z/2}(Y \times J, Y \times \partial J)
		\to K^0_{\Z/2}(Y \times J) \cong K^0_{\Z/2}(Y),
\end{equation*}
where the second map is induced by the inclusion $(Y \times J, \varnothing) \to (Y \times J, Y \times \partial J)$.
The composite of the above maps is given by mapping an $\id_{\Z/2}$-twisted vector bundle with a $Cl_{0,1}$-action $(E, \eta, \rho, \gamma)$ to the $\Z/2$-equivariant graded vector bundle $(E, \eta, \chi)$, where $\chi = \sqrt{-1} \gamma \rho$.
If there is a fixed point $y_0$ of the involution on $Y$, combined with the restriction homomorphism
$K^0_{\Z/2}(Y) \to K^0_{\Z/2}(\{y_0\}) = R$,
we obtain the following homomorphism:
\begin{equation}
\label{restriction}
	K^1_\pm(Y) \to R.
\end{equation}
We simply call the map (\ref{restriction}) the {\em restriction homomorphism} in this paper.
By using the identification~(\ref{eq4}), the above restriction homomorphism simply counts the number of irreducible representations of $\Z/2$ on the fixed point of the involution in $Y$, but with a modified $\Z/2$-action; that is, through our restriction homomorphism, we replace the $\Z/2$-action $\rho$ on $E$ with $\chi$.

\begin{remark}
\label{action}
The above $\Z/2$-action $\chi$ commutes with the gradation $\eta$ and anticommutes with the Clifford action $\gamma$.
Once the $\id_{\Z/2}$-twisted vector bundle with the $Cl_{0,1}$-action $(E, \eta, \rho, \gamma)$ is fixed, the correspondence between $\rho$ and $\chi$ is one-to-one.
In this sense, we can define the group $K^1_\pm(Y)$ from quadruples $(E, \eta, \chi, \gamma)$, and within this formulation, the map in (\ref{restriction}) is simply the restriction homomorphism.
In this paper, we define the $K^1_\pm$-group according to \cite{Gomi15} and mention the above modification of the $\Z/2$-action within our restriction homomorphism (\ref{restriction}).
In our applications in Sect.~\ref{Sect4}, we first identify quadruples $(E, \eta, \chi, \gamma)$ from our setup and then adjust the group action $\chi$ to $\rho$ to define an element in the $K^1_\pm$-group.
\end{remark}

\begin{example}
\label{expt}
Let
$\epsilon =
\begin{pmatrix}
0 & 1\\
1 & 0
\end{pmatrix}$,
$\rho =
\begin{pmatrix}
0 & -\sqrt{-1}\\
\sqrt{-1} & 0
\end{pmatrix}$
and
$\gamma =
\begin{pmatrix}
1 & 0\\
0 & -1
\end{pmatrix}$.
The quadruple $(\C^2, \epsilon, \rho, \gamma)$ is an $\id_{\Z/2}$-twisted vector bundle with a $Cl_{0,1}$-action on the point.
In this case, we have $\chi = \sqrt{-1}\gamma \rho = \epsilon$.
Note that $K^0_{\Z/2}(J, \partial J) \cong \Z$ and that $K^1_\pm(\pt) \cong K^2(\pt) \cong \Z$, where $\pt$ denotes the one-point set equipped with the trivial $\Z/2$-action.
The element $[\C^2, -\epsilon, \epsilon] \in K^1_{\pm}(\pt)$ maps to $1-t \in R$ by our restriction homomorphism.
This computation indicates that the $K$-class $[\C^2, -\epsilon, \epsilon]$ generates the group $K^1_{\pm}(\pt)$, and its image under the map $t_{\pt}$ generates the group $K^0_{\Z/2}(J, \partial J)$.
Therefore, the map $t_{\pt}$ is an isomorphism.
\end{example}

The following makes part of the isomorphisms in (\ref{isom}) explicit.
\begin{proposition}
\label{mapt}
The map $t_Y$ in (\ref{t}) is a group isomorphism.
\end{proposition}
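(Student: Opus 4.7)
The plan is to identify $t_Y$ as a natural transformation between two $\Z/2$-equivariant generalized cohomology theories on compact Hausdorff $\Z/2$-spaces and to bootstrap the point-isomorphism of Example~\ref{expt} to arbitrary $Y$. First I would record that $t_Y$ is well-defined and natural: the pointwise formulas $\zeta_i(y,\theta) = \eta_i(y)\cos\theta - \gamma\sin\theta$ and $\chi = \sqrt{-1}\gamma\rho$ commute with pullbacks along $\Z/2$-equivariant maps, respect direct sums, and send homotopies of gradations on $E$ to homotopies of gradations on $\pi^*E$ that are constant on $Y \times \partial J$; the $\Z/2$-symmetry of $\zeta_i$ under $\chi$ is already verified in the preamble of the statement.

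Next I would invoke that both $Y \mapsto K^1_\pm(Y)$ and $Y \mapsto K^0_{\Z/2}(Y \times J, Y \times \partial J)$ satisfy the equivariant Eilenberg--Steenrod axioms (homotopy invariance, excision, the long exact sequence of a $\Z/2$-pair) in Karoubi's formulation, as developed in \cite{Gomi15,Gomi23}, and that $t$ commutes with the associated boundary maps because these are built from the same sort of pointwise Clifford constructions used to define $\zeta_i$. With this Mayer--Vietoris compatibility in place, the proposition reduces to checking that $t$ is an isomorphism on the two $\Z/2$-orbit types. For the fixed orbit this is exactly Example~\ref{expt}, where $t_{\pt}$ is shown to send the generator $[\C^2,-\epsilon,\epsilon]$ of $K^1_\pm(\pt) \cong \Z$ to a generator of $K^0_{\Z/2}(J,\partial J) \cong \Z$. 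For the free orbit $\Z/2$, induction to the trivial subgroup together with the contractibility of $J$ show that both sides vanish, so $t_{\Z/2}$ is trivially an isomorphism.

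The proof is then completed by induction on the $\Z/2$-cells of a finite $\Z/2$-CW approximation of $Y$, applying the five-lemma to the morphism of Mayer--Vietoris sequences induced by attaching a $\Z/2$-cell; continuity of Karoubi $K$-theory under directed limits then extends the result to arbitrary compact Hausdorff $\Z/2$-spaces. The main obstacle is verifying that $t$ commutes with the boundary maps of the two long exact sequences, since on the $K^1_\pm$ side an additional Clifford suspension enters and reshuffles the twisted $\Z/2$-action via $\chi = \sqrt{-1}\gamma\rho$; however, in the Karoubi framework of \cite{Gomi23} the boundary maps admit explicit pointwise descriptions, and the verification reduces to checking that the Clifford generator $\gamma$ used to build $\zeta_i$ intertwines the two suspensions.
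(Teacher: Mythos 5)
Your strategy is viable but genuinely different from the paper's. The paper does not run a cell-by-cell induction at the level of $K$-groups: it observes that both functors are representable, that $t$ induces a $\Z/2$-map between the classifying spaces (following \cite{Mat71b,KP74,AS69,AH04}), and then applies the equivariant Whitehead theorem \cite{Bre67,Mat71a}. This reduces the claim to two checks: for $\Z/2 \times Y$ (equivalently, after forgetting the action), where $t_Y$ is Karoubi's classical Thom isomorphism \cite[Chapter~III, Theorem~5.10]{Karoubi78}; and for trivial actions, where the decompositions $K^1_\pm(Y) \cong K^0(Y) \otimes K^1_\pm(\pt)$ and $K^0_{\Z/2}(Y \times J, Y \times \partial J) \cong K^0(Y) \otimes K^0_{\Z/2}(J, \partial J)$ identify $t_Y$ with $1 \otimes t_{\pt}$, so Example~\ref{expt} finishes the proof. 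Your reduction to orbit types ($\pt$ via Example~\ref{expt}, and the free orbit where both groups vanish for degree reasons) is the cellular shadow of the same dichotomy, and handling the free part by vanishing rather than by Karoubi's theorem is a legitimate shortcut. What the paper's route buys is that the entire burden of compatibility with structure maps is discharged at once by exhibiting $t$ as induced by a single map of classifying spaces, and the conclusion then holds for every compact Hausdorff $Y$ because a genuine $\Z/2$-homotopy equivalence induces bijections on $[Y,-]_{\Z/2}$ with no CW approximation or continuity argument.

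The one place where your write-up falls short of a proof is exactly the step you flag: compatibility of $t$ with the Mayer--Vietoris and boundary maps of the two theories. This is the load-bearing hypothesis of the five lemma and cannot be waved through; in the finite-dimensional Karoubi formulation the boundary map on the $K^1_\pm$ side carries an extra Clifford generator, and one must actually verify that $\gamma$ intertwines the two suspensions (the sign conventions in $\zeta_i = \eta_i \cos\theta - \gamma \sin\theta$ and $\chi = \sqrt{-1}\gamma\rho$ matter here). Likewise, the passage from finite $\Z/2$-CW complexes to arbitrary compact Hausdorff $\Z/2$-spaces needs more than a one-line appeal to continuity: one must realize $Y$ as an inverse limit of finite $\Z/2$-complexes and know that both theories convert this into a direct limit. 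Neither issue is fatal, but both are genuine work that your sketch defers, and the paper's representability argument is precisely the device that makes them unnecessary.
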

\begin{proof}
We follow the discussions in \cite{AH04}.
The classifying space of $K^0_{\Z/2}$-groups are provided in \cite{Mat71b,KP74}.
Following \cite{AS69}, the classifying space for $K^1_{\pm}$-groups can be constructed by starting from the classifying space for $K^0_{\pm}$-groups in \cite{AH04}.
As in \cite[Chapter~III]{Karoubi78}, the map $t$ in (\ref{t}) induces a $\Z/2$-map between these spaces.
Therefore, by the equivariant Whitehead theorem \cite{Bre67,Mat71a}, it is sufficient to show that $t_Y$ is an isomorphism for spaces $Y$ with trivial $\Z/2$-action and $\Z/2 \times Y$ (in other words, forget the $\Z/2$-action on $Y$).

If we forget the $\Z/2$-action on $Y$, then the map
$t_Y \colon K^{1}(Y) \to K^0(Y \times J, Y \times \partial J)$ is shown to be the isomorphism in \cite[Chapter~III, Theorem 5.10]{Karoubi78}.
When the $\Z/2$-action on $Y$ is trivial (that is, $\tau = \id_Y$), we have
\begin{equation*}
K^1_\pm(Y)  \cong K^0(Y) \otimes K^1_\pm(\pt), \ \
K^0_{\Z/2}(Y \times J, Y \times \partial J) \cong K^0(Y) \otimes K^0_{\Z/2}(J, \partial J).
\end{equation*}
Through these isomorphisms, the map $t_Y$ corresponds to $1 \otimes t_{\pt}$.
By Example~\ref{expt}, the map $t_{\pt}$ is an isomorphism.
Therefore, the map $t_Y$ is an isomorphism, which completes the proof.
\end{proof}

\section{Bulk-hinge correspondence}
\label{Sect3}
In this section, we consider three-dimensional systems that preserve inversion symmetry and discuss the bulk-hinge correspondence for second-order topological insulators.

\subsection{Setup}
\label{Sect3.1}
Let $N$ be a positive integer
and $H \colon \T^3 \to M(N,\C)$ be a continuous and Hermitian matrix-valued function.
We write an element of $\tT^3$ as $(z,w,t)$, where $z$, $w$ and $t$ are elements of $\T$.
We assume that for each $t$ in $\T$, the map $H(t) \colon \T^2 \to M(N,\C)$ is a two-variable rational matrix function concerning the variables $z$ and $w$.
Our $H$ is a continuous map to $M(N, \C)$ and there are no poles on $\T^3$.
We may simply write $H$ for the multiplication operator $M_H$ on the Hilbert space $l^2(\Z^3, \C^N)$ generated by the matrix-valued function $H$.
Translation-invariant single-particle Hamiltonians on the lattice $\Z^3$ with finitely many hopping terms correspond to three-variable Laurent polynomial matrix-valued functions on $\T^3$ and are therefore within this class.
From this viewpoint, we call $H$ the bulk Hamiltonian.
Let $I$ be an $N$-by-$N$ Hermitian unitary matrix.
We write an element $\varphi$ in $l^2(\Z^3, \C^N)$ as a family $\{ \varphi_\mathfrak{n} \}_{\mathfrak{n} \in \Z^3}$ of vectors $\varphi_{\mathfrak{n}}$ in $\C^N$ satisfying $\sum_{\mathfrak{n} \in \Z^3}||\varphi_{\mathfrak{n}}||^2 < \infty$.
By abuse of notation, we also write $I$ for the operator on $l^2(\Z^3, \C^N)$ defined by $(I \varphi)_{\mathfrak{n}} = I \varphi_{-\mathfrak{n}}$.
In what follows, we consider Hamiltonians that preserve inversion symmetry.
Correspondingly, we assume that our Hamiltonian $H$ satisfies the relation $I H I^* = H$ as operators on $l^2(\Z^3, \C^N)$.
In terms of matrix-valued functions on $\T^3$, the inversion symmetry is described as follows:
\begin{equation}
\label{inversionHam}
	I H(z, w, t) I^* = H(\bar{z}, \bar{w}, \bar{t}),
\end{equation}
where $(z,w,t) \in \T^3$.
For $i=1,2,3,4$ and $j=a,b,c,d$, let $H^i(t) = T^i_{H(t)}$ and $H^j(t) = T^j_{H(t)}$ for each $t$ in $\T$, that is, the half-plane and quarter-plane Toeplitz operators of symbol $H(t)$, respectively.
These operators are our models for four surfaces and four hinges of rectangular prism-shaped systems (see Fig.~\ref{Fig1}).

We next assume that these inversion-symmetric Hamiltonians have a spectral gap at zero energy\footnote{In what follows, the word {\em gapped} always means {\em gapped at zero energy}. The following Theorem~\ref{theorem3D} also holds when the spectrum of $H$ is contained in $\R_{>0}$ or $\R_{<0}$, but the conclusion is trivial.}.
Correspondingly, we assume that zero is not contained in the spectrum of $H$; in other words, our matrix-valued function $H$ takes values in the general linear group $GL(N,\C)$.
Under this assumption, we introduce a gapped bulk topological invariant, which is called the symmetry-based indicator.
For such a gapped bulk Hamiltonian, its {\em Bloch bundle} is a vector bundle $\Bloch$ on $\T^3$ whose fiber at a point $(z,w,t)$ in $\T^3$ is given as follows:
\begin{equation}
\label{Bloch}
	 (\Bloch)_{(z,w,t)} = \bigcup_{\mu < 0} \Ker (H(z,w,t) - \mu).
\end{equation}
Owing to the presence of inversion symmetry, the Bloch bundle is a $\Z/2$-equivariant vector bundle on $\tT^3$.
Let $E$ be a finite-rank $\Z/2$-equivariant complex vector bundle on $\tT^3$, and let $\Gamma^3_i$ ($i= 1,\ldots, 8$) be the eight fixed points of the involution on $\tT^3$.
For each $\Gamma^3_i$, there is a restriction homomorphism from $K^0_{\Z/2}(\tT^3)$ to $K^0_{\Z/2}(\{\Gamma^3_i\})$.
Through this map, the $K$-class of the bundle $E$ in $K^0_{\Z/2}(\tT^3)$ is mapped to an element in $R = \Z[t]/(t^2-1)$.
We denote this element as $n_+(\Gamma^3_i)1 + n_-(\Gamma^3_i)t$, where $n_\pm(\Gamma^3_i)$ are nonnegative integers.

\begin{remark}
\label{remark3.1}
If we take the Bloch bundle $\Bloch$ for the bundle $E$, the nonnegative integers $n_{\pm}(\Gamma^3_i)$ count the number of occupied states with even (+) and odd (-) parities with respect to the inversion symmetry operator $I$ at the time-reversal invariant momentum $\Gamma^3_i$.
\end{remark}

Following \cite{Po17,Khalaf18a,TTM20a}, we introduce a topological invariant for inversion-symmetric bulk-gapped Hamiltonians.
\begin{definition}[Symmetry-based indicator]
\label{defind3D}
Let $E$ be a finite-rank $\Z/2$-equivariant complex vector bundle on $\tT^3$.
We define
\begin{equation*}
	\mu_{3D}(E) = - \sum_{i=1}^8 n_-(\Gamma^3_i) \ \mod 4,
 \end{equation*}
which induces the group homomorphism $\mu_{3D} \colon K^0_{\Z/2}(\tT^3) \to \Z/4$.
\end{definition}
In what follows, we may simply write $\mu_{3D}(H)$ for $\mu_{3D}(\Bloch)$.

\begin{lemma}
\label{symindeven}
The map
$\mu_{3D} \colon K^0_{\Z/2}(\tT^3) \to \Z/4$
takes values of $0$ and $2$.
\end{lemma}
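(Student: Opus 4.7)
The plan is to reduce the evenness of $\mu_{3D}$ to the standard two-dimensional parity identity for inversion-symmetric line bundles on $\tT^2$ (as in \cite{HPB11}) applied to horizontal slices of $\tT^3$. Since $\mu_{3D}$ is a group homomorphism and $\{0,2\}$ is the unique index-$2$ subgroup of $\Z/4$, it suffices to show $\sum_{i=1}^{8} n_-(\Gamma^3_i) \equiv 0 \pmod{2}$ for every class $[E] \in K^0_{\Z/2}(\tT^3)$.

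First, I would reduce to the case of a line bundle by passing to the determinant. At each TRIM $\Gamma^3_i$, the involution acts on the one-dimensional fiber $(\det E)_{\Gamma^3_i}$ as multiplication by $(-1)^{n_-(E|_{\Gamma^3_i})}$, so $n_-(E|_{\Gamma^3_i}) \equiv n_-(\det E|_{\Gamma^3_i}) \pmod{2}$. It therefore suffices to prove the claim for an arbitrary (virtual) $\Z/2$-equivariant line bundle $L$ on $\tT^3$.

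Next, I would split the eight TRIMs into the two subsets determined by $t_0 = \pm 1$; each is the image of the four TRIMs of $\tT^2$ under the slices $\iota_{\pm} \colon \tT^2 \hookrightarrow \tT^3$, $(z,w) \mapsto (z,w,\pm 1)$. Because $\iota_{+}$ and $\iota_{-}$ are homotopic through the third coordinate, the pullbacks $\iota_{\pm}^* L$ are isomorphic non-equivariant line bundles on $\tT^2$ and share a common first Chern number $c(L)$. Applying the two-dimensional parity formula to each slice yields $\sum_{j=1}^{4} n_-(L|_{\iota_{\pm}(\Gamma^2_j)}) \equiv c(L) \pmod{2}$, and summing the two slices gives
\begin{equation*}
\sum_{i=1}^{8} n_-(L|_{\Gamma^3_i}) \equiv 2\, c(L) \equiv 0 \pmod{2},
\end{equation*}
as required.

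The main subtlety is that the 2D parity formula is usually stated for the occupied-band bundle of an actual Hamiltonian, not for virtual line bundles. However, both sides of the identity $c_1(L) \equiv \sum_j n_-(L|_{\Gamma^2_j}) \pmod{2}$ are additive under direct sums of equivariant line bundles on $\tT^2$, so the identity extends by $\Z$-linearity to all of $K^0_{\Z/2}(\tT^2)$; alternatively, one can verify it on a set of generators of $K^0_{\Z/2}(\tT^2)$ obtained from the decomposition~(\ref{isom}).
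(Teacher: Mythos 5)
Your argument is correct, but it takes a genuinely different route from the paper. The paper's proof is a one-line table check: the twelve generators of $K^0_{\Z/2}(\tT^3)$ have already been listed (Sect.~\ref{Sect3.3.4}), their restrictions to the eight fixed points are tabulated in Table~\ref{table3}, and one reads off that each generator has an even number of $t$'s in its row; since $\mu_{3D}$ is a homomorphism, this settles the claim. You instead avoid the three-dimensional generators entirely: you reduce to the determinant line bundle (or, equivalently, just use that both sides of the two-dimensional identity $c_1 \equiv \sum_j n_- \pmod 2$ are homomorphisms $K^0_{\Z/2}(\tT^2) \to \Z/2$, which makes the determinant step redundant), slice $\tT^3$ at $t = \pm 1$, and observe that the two slices carry the same non-equivariant first Chern number because $\iota_+ \simeq \iota_-$, so the eight parities sum to $2c_1 \equiv 0$. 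I checked the 2D parity identity on the six generators $\uC_0, \uC_1, H, \uC_1H, L_1, L_2$ of $K^0_{\Z/2}(\tT^2)$ from Sect.~\ref{Sect3.3.3} and it holds (e.g.\ $H$ has $n_-=1$ only at $(1,1)$ and $c_1=1$; $L_1$ has $n_-=1$ at the two points with $z=1$ and $c_1=0$), so your argument is complete provided you carry out that verification rather than merely citing \cite{HPB11}; note the generators you need are those of Sect.~\ref{Sect3.3.3}, not ones extracted from the decomposition~(\ref{isom}). What your approach buys is a conceptual explanation of the evenness (each $t$-slice contributes the same Chern number mod $2$) and independence from the full computation of $K^0_{\Z/2}(\tT^3)$; what the paper's approach buys is brevity, since Table~\ref{table3} is needed anyway for Lemma~\ref{02}. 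One stylistic caveat: ``virtual line bundle'' is not meaningful as written --- the correct reduction is that the parity is additive, hence it suffices to treat actual bundles, and then to pass to $\det E$.
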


\begin{proof}
Once the generators of the group $K^0_{\Z/2}(\tT^3)$ are specified, it is sufficient to compute their symmetry-based indicators,
and the result follows from \cite[Figure~1]{Gomi15} (also included as Table~\ref{table3} in this paper).
\end{proof}
By Lemma~\ref{symindeven}, we obtain the following group homomorphism:
\begin{equation}
\label{1/2}
\frac{1}{2}\mu_{3D} \colon K^0_{\Z/2}(\tT^3) \to \Z/2.
\end{equation}

To discuss topological hinge states, we additionally assume the following surface gapped condition.

\begin{assumption}[Spectral gap condition]
\label{assumption3D}
For each $i=1,2,3,4$ and $t$ in $\T$, the half-plane Toeplitz operators $H^i(t)$ are invertible.
\end{assumption}

Assumption~\ref{assumption3D} is satisfied if a spectral gap is present for both the bulk Hamiltonian and our four surface models\footnote{
It follows from Assumption~\ref{assumption3D} that the matrix-valued function $H$ takes values in invertible matrices.
We begin with the bulk gapped condition and then address the surface gapped condition for Remark~\ref{rem4.6}.}.
Furthermore, under Assumption~\ref{assumption3D}, our models for four hinges, $\{ H^j(t) \}_{t \in \T}$ ($j=a,b,c,d$), form a family of self-adjoint Fredholm operators \cite{DH71,Pa90}.
We denote their spectral flow as $\sf_j(H) \in \Z$, where $j=a,b,c,d$.
The spectral flow accounts for the number of chiral hinge states located at each of the four hinges \cite{Hayashi2}.
The inversion symmetry interchanges hinges ($a$ with $c$ and $b$ with $d$), reversing the directions of the hinges.
For example, $\{ H^a(t) \}_{t \in \T}$ is identified with $\{ H^c(\bar{t}) \}_{t \in \T}$.
This symmetry leads to the following relations:
\begin{equation}
\label{inversionsf}
	\sf_a(H) = - \sf_c(H), \ \ \sf_b(H) = - \sf_d(H).
\end{equation}
Following \cite{TTM20a}, we formulate the bulk-hinge correspondence as follows:

\begin{theorem}[Bulk-hinge correspondence]
\label{theorem3D}
Under Assumption~\ref{assumption3D}, the following equality holds:
\begin{equation*}
	\frac{1}{2}\mu_{3D}(H) = \sf_a(H) + \sf_b(H) \ \mod 2.
\end{equation*}
\end{theorem}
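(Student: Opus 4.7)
The strategy is to present both sides of the stated equality as images of a single $K$-theory class $[\eBloch]\in K^0_{\Z/2}(\tS^2\times\tS^2\times\tT)$ under two homomorphisms to $\Z/2$, and then to verify the equality of these homomorphisms by computing the target group and checking on generators. For the extension step, Assumption~\ref{assumption3D} together with Remark~\ref{reminv} guarantees the required invertibility of all four half-plane Toeplitz operators $H^1,H^2,H^3,H^4$ uniformly in $t$. Following the two-step extension procedure of \cite{Hayashi5} (iterating Lemma~\ref{extension1}, first in the $z$-variable using $H^1$ and $H^3$ and then in the $w$-variable using $H^2$ and $H^4$), and propagating Hermiticity and inversion equivariance by Lemmas~\ref{extension2} and~\ref{extension3}, I obtain a continuous, Hermitian, invertible matrix function
\begin{equation*}
  H^e\colon \tS^2\times\tS^2\times\tT\to GL(N,\C)
\end{equation*}
that intertwines the involution $(z,w,t)\mapsto(z^{-1},w^{-1},\bar t)$ via $\Ad_I$. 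Its negative-eigenspace bundle defines the desired $\Z/2$-equivariant class $[\eBloch]$ extending $\Bloch$.

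For the bulk side, the equivariant inclusion $\tT^3\hookrightarrow\tS^2\times\tS^2\times\tT$ contains all eight fixed points $\Gamma^3_i$, so composing restriction with the map $\tfrac{1}{2}\mu_{3D}$ from~(\ref{1/2}) yields a homomorphism
\begin{equation*}
  \phi_{\mathrm{bulk}}\colon K^0_{\Z/2}(\tS^2\times\tS^2\times\tT)\to\Z/2
\end{equation*}
with $\phi_{\mathrm{bulk}}([\eBloch])=\tfrac{1}{2}\mu_{3D}(H)$. For the hinge side, the index formula of \cite{Hayashi5} expresses the Fredholm index of a quarter-plane Toeplitz operator $T^a_f$ (under invertibility of $T^1_f,T^2_f$) as a Chern-type pairing against the extension of its symbol to $\tS^2\times\tS^2$. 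Parametrizing by $t\in\tT$ promotes this identity to a spectral-flow formula for $\{H^a(t)\}_{t\in\tT}$, and the relations~(\ref{inversionsf}) pair it with the analogous formula at the inversion partner $c$. The analogous construction for the orbit of $(0,\infty)$ produces $\sf_b(H)$. Summing the two and reducing modulo two gives a second homomorphism
\begin{equation*}
  \phi_{\mathrm{hinge}}\colon K^0_{\Z/2}(\tS^2\times\tS^2\times\tT)\to\Z/2
\end{equation*}
satisfying $\phi_{\mathrm{hinge}}([\eBloch])=\sf_a(H)+\sf_b(H)\bmod 2$.

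The theorem then reduces to the universal identity $\phi_{\mathrm{bulk}}=\phi_{\mathrm{hinge}}$. Using Gomi's computational framework \cite{Gomi15} together with the splittings~(\ref{isom}) applied iteratively in the three factors, I compute $K^0_{\Z/2}(\tS^2\times\tS^2\times\tT)$ and pick a finite set of generators compatible with the product structure. For each generator I exhibit a small-rank representative Hamiltonian, evaluate $\tfrac{1}{2}\mu_{3D}$ at the eight symmetry-invariant momenta together with the relevant quarter-plane spectral flows directly, and check that the resulting $\Z/2$-values agree. Equality on generators then forces $\phi_{\mathrm{bulk}}([\eBloch])=\phi_{\mathrm{hinge}}([\eBloch])$, which is the claimed identity.

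The main obstacle is the $K$-theoretic identification in the hinge step: lifting the scalar, non-equivariant index formula of \cite{Hayashi5} to a parametrized spectral-flow statement compatible with the $\Z/2$-action implementing inversion, and certifying that the natural invariant attached to the orbit of $(0,0)$ in $\tS^2\times\tS^2$ really recovers $\sf_a(H)$ rather than some other combination of the four hinge spectral flows. The fact that only the mod-two sum $\sf_a+\sf_b$ appears is reflected in the equivariant cell structure, which groups the hinges into the two orbits $\{a,c\}$ and $\{b,d\}$; making this bookkeeping precise is where most of the technical work lies. Once this identification is in place, the verification on generators is a finite, model-by-model calculation.
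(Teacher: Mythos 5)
Your overall architecture --- extend the bulk Hamiltonian using the surface-gap condition, take the resulting $\Z/2$-equivariant $K$-class on an enlarged space, and compare two homomorphisms to $\Z/2$ on generators --- is the same as the paper's. But there is a genuine gap at the very first step: the claimed extension to the \emph{full} product $\tS^2\times\tS^2\times\tT$ does not exist in general, and if it did the theorem would be vacuous. Applying Lemma~\ref{extension1} in the $z$-variable (using invertibility of $H^1(t),H^3(t)$) gives an extension over $\SSS^2\times\T\times\T$, and applying it in the $w$-variable gives one over $\T\times\SSS^2\times\T$; these agree on $\T^3$ and glue to an extension over $X\times\tT$, where $X=(\D_+\times\T)\cup_{\T^2}(\D_-\times\T)\cup_{\T^2}(\T\times\D_+)\cup_{\T^2}(\T\times\D_-)$ is a \emph{proper} subspace of $\SSS^2\times\SSS^2$. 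To go further and cover $\D_\pm\times\D_\pm$ you would need the $z$-extended symbol to retain trivial partial indices in $w$ for $z$ in the \emph{open} disk, which Assumption~\ref{assumption3D} does not give you --- and the obstruction to this is exactly the hinge invariant. Concretely: if $\eBloch$ extended over $\SSS^2\times\SSS^2\times\tT$, its restriction to $\SSS^3_{\pm\pm}\times\T=\partial(\D_\pm\times\D_\pm)\times\T$ would bound, so all four second Chern numbers, hence all four spectral flows $\sf_j(H)$, would vanish; one can also check that $\tfrac12\mu_{3D}$ kills the image of $K^0_{\Z/2}(\tS^2\times\tS^2\times\tT)$ in $K^0_{\Z/2}(\tT^3)$. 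Your statement would then only apply to Hamiltonians with both sides zero, excluding the nontrivial example of Sect.~\ref{Sect5.2} where $\tfrac12\mu_{3D}(H')=1$.

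The fix is to work on $X\times\tT$ throughout. The bulk side is the restriction $K^0_{\Z/2}(X\times\tT)\to K^0_{\Z/2}(\tS^2\times\tT^2)\to K^0_{\Z/2}(\tT^3)\xrightarrow{\frac12\mu_{3D}}\Z/2$, and the hinge side is the forgetful map to $K^0(X\times\T)$ followed by restriction to the four three-spheres $\SSS^3_{\pm\pm}\times\T$ and second Chern numbers. The bookkeeping you correctly flag as the ``main obstacle'' --- which linear combination of the four hinge spectral flows each localized invariant computes --- is resolved in the paper by computing $K^1(X)\cong\Z^3$ via excision from three small oriented $3$-balls inside the solid tori and establishing the formula $(p,q,r)\mapsto(p+q,-q+r,-r,-p)$ for the four restrictions (Lemmas~\ref{mape} and~\ref{lem3.20}), together with the identification of $c_2\circ f_1(\eBloch)$ with $(\sf_a,\sf_b,\sf_c,\sf_d)$ from \cite{Hayashi5}. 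The conclusion then comes not from checking representative Hamiltonians generator by generator, but from the commutativity of the middle square of Fig.~\ref{diagram3D}: writing $\eBloch=\sum a_ix_i$ in the generators of $K^0_{\Z/2}(X\times\tT)\cong\Z^{13}$ and $f_1(\eBloch)=(\alpha_1,\alpha_2,\alpha_3,p,q,r)$, one gets $a_{11}+2a_{12}=p+r$, whence $\tfrac12\mu_{3D}(H)=a_{11}\equiv p+r=\sf_a(H)+\sf_b(H)\bmod 2$. Without replacing $\tS^2\times\tS^2$ by $X$, the proposal cannot be repaired.
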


By Theorem~\ref{theorem3D}, when a bulk-surface gapped inversion-symmetric Hamiltonian satisfies $\frac{1}{2}\mu_{3D}(H) = 1$, then at least $\sf_a(H) = - \sf_c(H)$ or $\sf_b(H) = -\sf_d(H)$ is nontrivial, and topological hinge states appear.

For the proof of Theorem~\ref{theorem3D}, the idea is to use the extension of the bulk Hamiltonian $H$ as in \cite{Hayashi5} under our four surface gapped conditions (Assumption~\ref{assumption3D}).
The key diagram is shown in Fig.~\ref{diagram3D}, where the discussion is reduced to the computation of (equivariant) topological $K$-theory groups and the maps between them.

\subsection{Key diagram}
\label{Sect3.2}
For our matrix-valued function $H$, we first consider $(w,t)$ in $\T^2$ as a parameter and fix it.
The continuous map $H(w,t) \colon \T \to GL(N,\C)$ is then a (single-variable) rational matrix function.
By Assumption~\ref{assumption3D}, the half-plane Toeplitz operators $H^1(t)$ and $H^3(t)$ are invertible for any $t$ in $\T$; therefore, the Toeplitz operators $T^+_{H(w,t)}$ and $T^-_{H(w,t)}$ are invertible for any $(w,t)$ in $\T^2$.
Therefore, $H(w,t)$ has trivial left and right partial indices.
By Lemma~\ref{extension1}, a continuous map $H^e \colon \SSS^2 \times \T^2 \to GL(N,\C)$ that extends $H$ on $\T^3$ is canonically associated.
Since our Hamiltonian $H$ preserves inversion symmetry (\ref{inversionHam}), by Lemma~\ref{extension3}, the extension $H^e$ satisfies the relation $IH^e(z,w,t)I^* = H^e(z^{-1}, \bar{w}, \bar{t})$ for any $(z,w,t)$ in $\SSS^2 \times \T^2$.
Similarly, by the invertibility of $H^2(t)$ and $H^4(t)$ in Assumption~\ref{assumption3D}, the extension $H^e \colon \T \times \SSS^2 \times \T \to GL(N,\C)$ of $H$ that satisfies $IH^e(z,w,t)I^* = H^e(\bar{z}, w^{-1}, \bar{t})$ for any $(z,w,t)$ in $\T \times \SSS^2 \times \T$ is canonically associated.
Since $H$ takes values in Hermitian matrices, its extension $H^e$ is also a Hermitian matrix-valued function.
Let us introduce the following space:
\begin{equation}
\label{X}
	X = (\D_+ \times \T) \underset{\T^2}{\cup}
		(\D_- \times \T) \underset{\T^2}{\cup}
			(\T \times \D_+)	 \underset{\T^2}{\cup}
				(\T \times \D_-).
\end{equation}
Note that the space $X$ is a subspace of $\SSS^2 \times \SSS^2$.
By using the variables $z$ and $w$ in $\SSS^2$, we write $(z,w)$ for the elements in $X$.
We equip $X$ with the involution given by $(z,w) \mapsto (z^{-1}, w^{-1})$.
Note that the torus $\tT^2$ is a $\Z/2$-subspace of $X$.
Summarizing the above discussion using the $\Z/2$ space $X$, starting from the inversion-symmetric Hamiltonian $H$ satisfying our Assumption~\ref{assumption3D}, there is a canonically associated nonsingular matrix-valued continuous map $H^E$ that extends $H$:
\begin{equation*}
	H^E \colon X \times \tT \to GL(N,\C),
\end{equation*}
which takes values in Hermitian matrices.
This extended Hamiltonian $H^E$ satisfies the following $\Z/2$-symmetry:
\begin{equation*}
	I H^E(z, w, t) I^* = H^E(z^{-1}, w^{-1}, \bar{t}),
\end{equation*}
where $(z,w,t) \in X \times \tT$.
Associated with this extended Hamiltonian $H^E$, there is a finite-rank $\Z/2$-equivariant complex vector bundle $\eBloch$ on $X \times \tT$, which is defined as in (\ref{Bloch}).
As $H^E$ is an extension of $H$, this bundle $\eBloch$ is an extension of the Bloch bundle $\Bloch$ on $\tT^3$ to $X \times \tT$.
This bundle $\eBloch$ provides an element of the group $K^0_{\Z/2}(X \times \tT)$.

We consider the following diagram in Fig.~\ref{diagram3D} containing the group $K^0_{\Z/2}(X \times \tT)$.
The maps $f_i$ are forgetful maps, and $\phi_1$, $\varphi_1$ and $\psi_1$ are restriction homomorphisms induced by the inclusions $\tT^3 \subset \tS^2 \times \tT \times \tT \subset X \times \tT$.
To introduce the bottom-left vertical map $c_2$, we consider the following four spaces:
\begin{equation}
\label{Spm}
	\SSS_{\pm\pm}^3 = (\D_{\pm} \times \T) \underset{\T^2}{\cup} (\T \times \D_{\pm}),
\end{equation}
where double signs correspond.
Note that each of the spaces $\SSS_{\pm\pm}^3$ is a subspace of $X$ and is homeomorphic to the three-sphere.
The map $c_2$ is the composite of the map provided by the direct sum of the four restriction homomorphisms,
\begin{equation*}
	K^0(X \times \T) \to K^0(\SSS^3_{++} \times \T) \oplus K^0(\SSS^3_{-+} \times \T) \oplus K^0(\SSS^3_{--} \times \T) \oplus K^0(\SSS^3_{+-} \times \T),
\end{equation*}
and the map given by taking the second Chern number for each group $K^0(\SSS_{\pm\pm} \times \T)$.

\begin{figure}[htbp]
\centering
\begin{tabular}{c}
\xymatrix{
K^0_{\Z/2}(X \times \tT) \ar[r]^{\phi_1 \hspace{2mm}} \ar[d]^{f_1}
	& K^0_{\Z/2}(\tS^2 \times \tT^2) \ar[r]^{\hspace{5mm} \psi_1} \ar[d]^{f_2}
		& K^0_{\Z/2}(\tT^3) \ar[r]^{\hspace{5mm} \frac{1}{2}\mu_{3D}}
			& \Z/2\\
K^0(X \times \T) \ar[r]^{\varphi_1 \hspace{2mm}} \ar[d]^{c_2}
	& K^0(\SSS^2 \times \T^2) & & \\
\Z^4 & & &}
\end{tabular}
\caption{Key diagram for Theorem~\ref{theorem3D}}
\label{diagram3D}
\end{figure}

Since $\psi_1 \circ \phi_1$ is the restriction homomorphism and the bundle $\eBloch$ is the extension of the bundle $\Bloch$, the image of the $K$-class of $\eBloch$ in $K^0_{\Z/2}(X \times \tT)$ via the horizontal maps in Fig.~\ref{diagram3D} is as follows:
\begin{equation}
\label{mu3D}
	\frac{1}{2}\mu_{3D} \circ \psi_1 \circ \phi_1 (\eBloch) = \frac{1}{2}\mu_{3D}(\Bloch) = \frac{1}{2}\mu_{3D}(H).
\end{equation}
For the vertical maps, the image under the map $c_2 \circ f_1$ is a four-tuple of spectral flows of the families of quarter-plane Toeplitz operators \cite{Hayashi5}, that is,
\begin{equation}
\label{sf}
	c_2 \circ f_1 (\eBloch) = (\sf_a(H), \sf_b(H), \sf_c(H), \sf_d(H)),
\end{equation}
holds.
In what follows, we compute the maps in Fig.~\ref{diagram3D} in Sects.~\ref{Sect3.3} and \ref{Sect3.4} and discuss their relation in Sect.~\ref{Sect3.5}, which provides the proof of Theorem~\ref{theorem3D}.

\subsection{Computation of top horizontal maps}
\label{Sect3.3}
In this subsection, we focus on the top horizontal maps shown in Fig.~\ref{diagram3D}, which account for bulk topological invariants (\ref{mu3D}).
We compute equivariant $K$-groups, their generators, and the maps between them.
The necessary calculations were conducted in \cite{Gomi15}, which we mainly follow.
We collect the results in Sect.~\ref{Sect3.3.1} to Sect.~\ref{Sect3.3.4}, and Table~\ref{table3} is taken from \cite{Gomi15}.

\subsubsection{$\Z/2$-equivariant vector bundles}
\label{Sect3.3.1}
We start with discussions of $\Z/2$-equivariant vector bundles.
We write $\C_0$ and $\C_1$ for the trivial and nontrivial one-dimensional complex representations of $\Z/2$, respectively.
For a $\Z/2$-space $Y$ and a complex representation $V$ of $\Z/2$, we may simply write $\underline{V}$ for the product $\Z/2$-equivariant vector bundle $Y \times V$ on $Y$ when the base space is clear from the context.

We identify the Riemann sphere $\SSS^2$ with the complex projective line $\CP^1$.
The involution on $\tS^2$ corresponds to the involution $\tau_{\CP^1}([z:w]) = [w : z]$ on $\CP^1$.
Let $T$ be the tautological bundle on $\CP^1$.
Its fiber $T_{[z:w]}$ consists of the complex line in $\C^2$ through $(z,w) \in \C^2$ and the origin,
and the involution $(v_1,v_2) \mapsto (-v_2, -v_1)$ on $\C^2$ induces the involution on $T$.
Equipped with this involution, $T$ is a $\Z/2$-equivariant complex line bundle on $\tS^2 = (\CP^1, \tau_{\CP^1})$.
We write $\tH$ for the dual bundle of $T$, which is the hyperplane bundle on $\CP^1$, and its first Chern number is $1$.
$\tH$ has a $\Z/2$-equivariant vector bundle structure induced by the involution on $T$.
In what follows, we take $-1$ as the base point of $\tT$.
Let
\begin{equation}
\label{q}
q \colon \tT \times \tT \to \tT \times \tT / \tT \vee \tT,
\end{equation}
be the quotient map.
We identify the quotient space with $\tS^2$ such that $q(1,1)$ is $1$ in $\tS^2$ and the other $\Z/2$-fixed points on the torus map to $-1$ in $\tS^2$.
Let $H = q^*\tH$, which is a $\Z/2$-equivariant line bundle on $\tT^2$.

\subsubsection{$K^0_{\Z/2}(\tT)$}
\label{Sect3.3.2}
Let $L = \T \times \C$ be the product line bundle on $\T$.
For the total space $L$, we consider the involution given by $(z,v) \mapsto (\bar{z}, -zv)$.
Then, $L \to \tT$ is a $\Z/2$-equivariant line bundle.
By \cite[Lemma~4.3]{Gomi15}, the group $K^0_{\Z/2}(\tT)$ is isomorphic to $\Z^3$, which is generated by $\uC_0$, $\uC_1$ and $L$.
We have the following ring homomorphism:
\begin{equation*}
F_1 \colon K^0_{\Z/2}(\tT) \to K^0(\T) \oplus R_{1} \oplus R_{-1},
\end{equation*}
which is given by $F_1(x) = (f(x); x|_1, x|_{-1})$, where $R_i = R$ for $i=\pm1$.
The map $F_1$ is injective.
By \cite[Lemma~4.3]{Gomi15}, the following relation\footnote{In equation (\ref{eq1}) and the following, we may omit $\otimes$ for brevity; for example, $\uC_1 L$ denotes $\uC_1 \otimes L$.} holds in $K^0_{\Z/2}(\tT)$:
\begin{equation}
\label{eq1}
	\uC_1(\uC_0 - L) = - (\uC_0 - L).
\end{equation}

\subsubsection{$K^0_{\Z/2}(\tT^2)$}
\label{Sect3.3.3}
Let $\pi_i \colon \tT^2 \to \tT$ be the $i$-th projection and $L_i = \pi_i^*L$, where $i=1,2$.
By \cite[Lemma~4.4]{Gomi15}, the group $K^0_{\Z/2}(\tT^2)$ is isomorphic to $\Z^6$, which is generated by
$\{ \uC_0, \uC_1, H, \uC_1H, L_1, L_2\}$.
Among the several relations in $K^0_{\Z/2}(\tT^2)$ presented in \cite[Lemma~4.4]{Gomi15}, we note the following:
\begin{equation}
\label{eq2}
	(\uC_0 - L_1)(\uC_1 - L_2) = (\uC_0 - \uC_1)(\uC_0 - H).
\end{equation}

In \cite[Lemma~4.4]{Gomi15}, the Mayer--Vietoris sequence is employed to compute the group $K^0_{\Z/2}(\tT^2)$.
We contain the following result for later use.
Let
\begin{equation}
\label{UV}
\begin{aligned}
	U &= \bigl\{ \exp (2\pi \sqrt{-1} \theta) \in \tT \ | \ \tfrac{1}{4} \leq \theta \leq \tfrac{3}{4} \bigl\},\\
	V &= \bigl\{ \exp (2\pi \sqrt{-1} \theta) \in \tT \ | -\tfrac{1}{4} \leq \theta \leq \tfrac{1}{4} \bigl\},
\end{aligned}
\end{equation}
be $\Z/2$-subspaces of $\tT$ satisfying $U \cup V = \tT$.
Let $\varphi_{\Z/2} \colon K^0_{\Z/2}(\tT^2) \to K^0_{\Z/2}(\tT \times U) \oplus K^0_{\Z/2}(\tT \times V)$ be the map in the Mayer--Vietoris sequence for $\{ \tT \times U,  \tT \times V \}$.
Note that $K^0_{\Z/2}(\tT \times U)$ and $K^0_{\Z/2}(\tT \times V)$ are isomorphic to $K^0_{\Z/2}(\tT)$.
The images of the generators of the group $K^0_{\Z/2}(\tT^2)$ under the map $\varphi_{\Z/2}$ are computed via composition with the injection $F_1 \oplus F_1$, and the result is as follows:
\begin{align*}
	\varphi_{\Z/2}(\uC_0) &= (\uC_0, \uC_0),&
	\varphi_{\Z/2}(\uC_1) &= (\uC_1, \uC_1),&
	\varphi_{\Z/2}(H) &= (\uC_0, L), \\
	\varphi_{\Z/2}(\uC_1 H) &= (\uC_1, \uC_0 + \uC_1 - L),&
	\varphi_{\Z/2}(L_1) &= (L, L),&
	\varphi_{\Z/2}(L_2) &= (\uC_0, \uC_1).
\end{align*}

\subsubsection{$K^0_{\Z/2}(\tT^3)$}
\label{Sect3.3.4}
Let $\pi_i \colon \tT^3 \to \tT$ be the $i$-th projection and $L_i = \pi_i^*L$ for $i=1,2,3$.
We also write $H_{ij} = (\pi_i, \pi_j)^*H$ for $1 \leq i < j \leq 3$.
By \cite[Lemma~4.5]{Gomi15}, the group $K^0_{\Z/2}(\tT^3)$ is isomorphic to $\Z^{12}$, whose generators are given as follows:
\begin{equation*}
\{ \uC_0, \uC_1, H_{12}, H_{23}, H_{13}, \uC_1H_{12}, \uC_1H_{23}, \uC_1H_{13}, L_1, L_2, L_3, H_{12}L_3 \}.
\end{equation*}
Let us consider the following ring homomorphism:
\begin{equation*}
F_3 : K^0_{\Z/2}(\tT^3) \longrightarrow K^0(\T^3) \oplus  \bigoplus_{(i, j, k) = (\pm 1, \pm 1, \pm 1)} R_{(i, j, k)},
\end{equation*}
given by $F_3(x) = (f(x), x|_{(i,j,k)})$, where $f \colon K^0_{\Z/2}(\tT^3) \to K^0(\T^3)$ is the forgetful map and $R_{(i, j, k)} = R$.
The map $F_3$ is injective, and the following holds in $K^0_{\Z/2}(\tT^3)$:
\begin{equation}
\label{eq3}
	(\uC_0 - H_{12})(\uC_0 - L_3) = (\uC_0 - H_{13})(\uC_0 - L_2) = (\uC_0 - H_{23})(\uC_0 - L_1),
\end{equation}
since their images under the map $F_3$ are $(0 ; 2(1-t), 0,0,0,0,0,0,0)$ in the notation in Table~\ref{table3}.

\begin{table}
\caption{The values of $F_3$ for $K^0_{\Z/2}(\tT^3)$ (from \cite[Fig.1]{Gomi15})}
\label{table3}
\begin{tabular}{c@{}|c|@{}c@{}c@{}c@{}c|@{}c@{}c@{}c@{}c}
$K^0_{\Z/2}$ &
$K^0$ &
$\scriptstyle{(1, 1, 1)}$ & $\scriptstyle{(-1, 1, 1)}$ &
$\scriptstyle{(1, -1, 1)}$ & $\scriptstyle{(-1, -1, 1)}$ &
$\scriptstyle{(1, 1, -1)}$ & $\scriptstyle{(-1, 1, -1)}$ &
$\scriptstyle{(1,-1,-1)}$ & $\scriptstyle{(-1, -1, -1)}$ \\
\hline
$\uC_0$ & $\uC$ &
1 & 1 & 1 & 1 &
1 & 1 & 1 & 1 \\
$\uC_1$ & $\uC$ &
$t$ & $t$ &$ t$ & $t$ &
$t$ & $t$ & $t$ & $t$ \\
\hline
$H_{12}$ & $H_{12}$ &
$t$ & 1 & 1 & 1 &
$t$ & 1 & 1 & 1 \\
$H_{23}$ & $H_{23}$ &
$t$ & $t$ & 1 & 1 &
1 & 1 & 1 & 1 \\
$H_{13}$ & $H_{13}$ &
$t$ & 1 & $t$ & 1 &
1 & 1 & 1 & 1 \\
\hline
$\uC_1H_{12}$ & $H_{12}$ &
1 & $t$ & $t$ & $t$ &
1 & $t$ & $t$ & $t$ \\
$\uC_1H_{23}$ & $H_{23}$ &
1 & 1 & $t$ & $t$ &
$t$ & $t$ & $t$ & $t$ \\
$\uC_1H_{13}$ & $H_{13}$ &
1 & $t$ & 1 & $t$ &
$t$ & $t$ & $t$ & $t$ \\
\hline
$L_1$ & $\uC$ &
$t$ & 1 & $t$ & 1 &
$t$ & 1 & $t$ & 1 \\
$L_2$ & $\uC$ &
$t$ & $t$ & 1 & 1 &
$t$ & $t$ & 1 & 1 \\
$L_3$ & $\uC$ &
$t$ & $t$ & $t$ & $t$ &
1 & 1 & 1 & 1 \\
\hline
$H_{12}L_3$ & $H_{12}$ &
1 & $t$ & $t$ & $t$ &
$t$ & 1 & 1 & 1\\
\end{tabular}
\end{table}

\subsubsection{$K^0_{\Z/2}(\tS^2 \times \tT^2)$}
\label{Sect3.3.5}
To compute the group $K^0_{\Z/2}(\tS^2 \times \tT^2)$, we start from the group $K^n_{\Z/2}(\tS^2)$.

\begin{lemma}
\label{lem3.7}
The group $K^0_{\Z/2}(\tS^2)$ is isomorphic to $\Z^4$, whose generators are given by $\{ \uC_0, \uC_1, \tH, \uC_1 \tH \}$.
We also have that $K^1_{\Z/2}(\tS^2) = 0$.
\end{lemma}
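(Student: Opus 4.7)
The plan is to compute $K^*_{\Z/2}(\tS^2)$ using a Mayer--Vietoris sequence adapted to an invariant hemisphere decomposition, and then identify the generators through restriction to the two fixed points of the involution combined with a forgetful-map comparison.

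First, I realize the involution $z \mapsto z^{-1}$ on $\tS^2$ as the $\pi$-rotation of $\SSS^2 \subset \R^3$ about the axis through the fixed points $\pm 1$. This lets me decompose $\tS^2 = \tilde D_+ \cup \tilde D_-$, where $\tilde D_\pm$ are the two closed hemispheres bounded by the great circle perpendicular to the rotation axis. Each $\tilde D_\pm$ is $\Z/2$-invariant and equivariantly contractible onto the fixed point it contains, so $K^0_{\Z/2}(\tilde D_\pm) = R$ and $K^1_{\Z/2}(\tilde D_\pm) = 0$. The intersection is the bounding great circle with antipodal (free) action, whose quotient is $\SSS^1$, giving $K^n_{\Z/2}(\tilde D_+ \cap \tilde D_-) = K^n(\SSS^1) = \Z$ for $n = 0, 1$. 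Since every $\Z/2$-representation pulls back to a trivial complex vector bundle on a free $\SSS^1$, each restriction $R \to \Z$ is the augmentation $a + bt \mapsto a + b$, and the Mayer--Vietoris sequence reduces to
\begin{equation*}
0 \to \Z \xrightarrow{\delta} K^0_{\Z/2}(\tS^2) \xrightarrow{\rho} R \oplus R \xrightarrow{\alpha} \Z \to K^1_{\Z/2}(\tS^2) \to 0,
\end{equation*}
with $\alpha$ the difference of augmentations. Surjectivity of $\alpha$ yields $K^1_{\Z/2}(\tS^2) = 0$, and the left portion gives $K^0_{\Z/2}(\tS^2) \cong \Z \oplus \ker(\alpha) \cong \Z^4$.

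Second, I evaluate $\rho$ on the four proposed classes. With the involution $(v_1, v_2) \mapsto (-v_2, -v_1)$ on $\C^2$, the tautological line $T_{[1:1]} = \C(1,1)$ is multiplied by $-1$ while $T_{[1:-1]} = \C(1,-1)$ is fixed, so $\tH|_{+1} = t$ and $\tH|_{-1} = 1$. This yields
\begin{equation*}
\rho(\uC_0) = (1,1),\quad \rho(\uC_1) = (t,t),\quad \rho(\tH) = (t,1),\quad \rho(\uC_1 \tH) = (1,t).
\end{equation*}
A short check shows $\{(1,1), (t,t), (t,1)\}$ is a $\Z$-basis of $\ker \alpha$, while $(1,t) = (1,1) + (t,t) - (t,1)$; consequently $y := \tH + \uC_1 \tH - \uC_0 - \uC_1$ lies in $\ker \rho = \mathrm{im}(\delta) \cong \Z$.

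Finally, to show $y$ generates $\mathrm{im}(\delta)$ rather than a proper multiple thereof, I compare with the non-equivariant Mayer--Vietoris sequence for $\SSS^2$ via the forgetful map. Non-equivariantly the connecting map $K^1(\SSS^1) \to K^0(\SSS^2)$ sends the generator to the Bott element of $\tilde{K}^0(\SSS^2) \cong \Z$, and the forgetful map on $K^1$ of the free $\SSS^1$ is pullback along the double cover $\SSS^1 \to \SSS^1$, hence multiplication by $2$. By naturality, $\delta(1)$ forgets to twice the Bott element; so does $y$ (direct calculation in $K^0(\SSS^2)$). Since the forgetful map is injective on $\mathrm{im}(\delta) \cong \Z$, this forces $y = \delta(1)$, and hence $\{\uC_0, \uC_1, \tH, \uC_1 \tH\}$ is a $\Z$-basis of $K^0_{\Z/2}(\tS^2) \cong \Z^4$. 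The main obstacle is this last identification: bare restriction data places $y$ only in the $\Z$-summand $\mathrm{im}(\delta)$, and ruling out the possibility that $y$ is a higher multiple of the generator needs the non-equivariant comparison and the degree-$2$ behavior of the double cover on $K^1$.
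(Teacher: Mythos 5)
Your proof is correct, but it takes a genuinely different route from the paper's. The paper's own argument is essentially a one-line reduction: using the presentation $\tS^2 = \tT\times\tT/\tT\vee\tT$ from \eqref{q}, both groups are read off from the six-term exact sequence of the pair $(\tT\times\tT,\tT\vee\tT)$, with all the input (the groups $K^0_{\Z/2}(\tT)\cong\Z^3$ and $K^0_{\Z/2}(\tT^2)\cong\Z^6$, their generators, and the relation \eqref{eq1}) imported from Sects.~\ref{Sect3.3.2}--\ref{Sect3.3.3}, i.e.\ from Gomi's computations. Your argument is instead self-contained: an invariant hemisphere Mayer--Vietoris decomposition, the augmentation description of the restrictions to the free great circle, and a forgetful-map comparison with the non-equivariant sequence. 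All the individual steps check out: the involution $z\mapsto z^{-1}$ is indeed the $\pi$-rotation fixing $\pm 1$ and acting antipodally on the perpendicular great circle; the fiber computation $\tH|_{1}=t$, $\tH|_{-1}=1$ agrees with the paper's Lemma~\ref{lemma1}; and $\{(1,1),(t,t),(t,1)\}$ is a basis of $\ker\alpha$ (determinant $1$). The step that genuinely needs care is exactly the one you isolate --- showing $y=\tH+\uC_1\tH-\uC_0-\uC_1$ generates $\ker\rho=\mathrm{im}(\delta)$ rather than being a proper multiple of the generator --- and your degree-two argument handles it correctly: $f(y)=2(\tH-1)$ while $f(\delta(1))=\pm 2(\tH-1)$ because the forgetful map on $K^1$ of the free circle is pullback along the double cover, so $y=\pm\delta(1)$. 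As a consistency check, $k^*y=L+\uC_1L-\uC_0-\uC_1=0$ in $K^0_{\Z/2}(\tT)$ by \eqref{eq1}, so $y$ is precisely the class killed by restriction to the equator, as your connecting-map description predicts. What your approach buys is independence from the torus computations; what the paper's buys is brevity and the built-in compatibility of the chosen generators with those of $K^0_{\Z/2}(\tT^2)$ used later in Lemma~\ref{psi1}.
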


\begin{proof}
Note that $\tS^2 = \tT \times \tT / \tT \vee \tT$, as in (\ref{q}).
Based on Sects.~\ref{Sect3.3.2} and \ref{Sect3.3.3}, the result follows from the six-term exact sequence for the pair $(\tT \times \tT, \tT \vee \tT)$.
\end{proof}

Let $\pi_{12}$ and $\pi_3$ be projections from $\tS^2 \times \tT$ to $\tS^2$ and $\tT$, respectively.
We set $\tH_{12} = \pi_{12}^* \tH$ and $L_3 = \pi_3^* L$.

\begin{lemma}
\label{lem3.8}
The group $K^0_{\Z/2}(\tS^2 \times \tT)$ is isomorphic to $\Z^6$, whose generators are given by $\{ \uC_0, \uC_1, \tH_{12}, \uC_1 \tH_{12}, L_3, \tH_{12} L_3 \}$.
We also have that $K^1_{\Z/2}(\tS^2 \times \tT) =0$.
\end{lemma}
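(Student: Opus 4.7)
The plan is to reduce Lemma~\ref{lem3.8} to Lemma~\ref{lem3.7} via the K\"unneth-type splitting~(\ref{isom}) applied to $Y = \tS^2$, which gives
\begin{equation*}
K^n_{\Z/2}(\tS^2 \times \tT) \;\cong\; K^n_{\Z/2}(\tS^2) \;\oplus\; K^{n-1}_\pm(\tS^2).
\end{equation*}
Lemma~\ref{lem3.7} already supplies $K^0_{\Z/2}(\tS^2) \cong \Z^4$ (with generators $\uC_0, \uC_1, \tH, \uC_1 \tH$) and $K^1_{\Z/2}(\tS^2) = 0$, so the essential new input is the computation of $K^\ast_\pm(\tS^2)$.

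To compute $K^\ast_\pm(\tS^2)$ I would apply the six-term exact sequence of the pair $(\tS^2 \times J,\, \tS^2 \times \partial J)$ from Remark~\ref{interval}. Since $J$ is equivariantly contractible, $K^\ast_{\Z/2}(\tS^2 \times J) \cong K^\ast_{\Z/2}(\tS^2)$, while the free $\Z/2$-action on $\partial J$ makes the diagonal action on $\tS^2 \times \partial J$ free, so $K^\ast_{\Z/2}(\tS^2 \times \partial J) \cong K^\ast(\SSS^2)$, which is $\Z^2$ in degree $0$ and trivial in degree $1$. The restriction $K^0_{\Z/2}(\tS^2) \to K^0(\SSS^2)$ then coincides with the forgetful map, sending $\uC_0, \uC_1 \mapsto 1$ and $\tH, \uC_1 \tH \mapsto H$, and is therefore surjective with kernel $\Z^2$ spanned by $\uC_0 - \uC_1$ and $\tH - \uC_1 \tH$. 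The six-term sequence consequently collapses to $K^1_\pm(\tS^2) \cong \Z^2$ and $K^0_\pm(\tS^2) = 0$, and substituting into the splitting produces $K^0_{\Z/2}(\tS^2 \times \tT) \cong \Z^6$ and $K^1_{\Z/2}(\tS^2 \times \tT) = 0$.

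It remains to identify the listed generators. The four classes $\uC_0, \uC_1, \tH_{12}, \uC_1 \tH_{12}$ are the pullbacks of the generators of $K^0_{\Z/2}(\tS^2)$ along the projection onto the $\tS^2$ factor, and they account for the first summand. For the second summand, the splitting realizes $K^{-1}_\pm(Y)$ as the kernel of the restriction to a $\Z/2$-fixed basepoint on the $\tT$-factor, and on the one-point base the generator of $K^{-1}_\pm(\pt)$ is represented inside $K^0_{\Z/2}(\tT)$ by $L - \uC_0$ (cf.\ Sect.~\ref{Sect3.3.2}). Pulling this up over $\tS^2$ and combining with the two generators of $K^1_\pm(\tS^2)$ produces lifts of the two missing classes; after adjusting each by the corresponding pullback from $\tS^2$, one arrives at the advertised representatives $L_3$ and $\tH_{12} L_3$.

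The main technical obstacle will be the bookkeeping of this splitting -- specifically, checking that the six named classes really form a basis rather than merely a generating set, and that the lifts assigned to the $K^1_\pm(\tS^2)$ summand are correctly identified. I would verify this by combining the restrictions to the four $\Z/2$-fixed points $\{\pm 1\} \times \{\pm 1\} \subset \tS^2 \times \tT$ with the forgetful image in $K^0(\SSS^2 \times \T) \cong \Z^2$, exactly as in the proofs of Lemma~\ref{lem3.7} and the discussion in Sect.~\ref{Sect3.3}; the resulting homomorphism $\Z^6 \to R^{\oplus 4} \oplus \Z^2$ is readily checked to have rank $6$, matching the abstract count from the splitting.
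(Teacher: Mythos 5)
Your route is genuinely different from the paper's. The paper runs the Mayer--Vietoris sequence for the cover $\{\tS^2\times U,\tS^2\times V\}$ of $\tS^2\times\tT$ (with $U,V$ the two half-circles of (\ref{UV})): since $K^1_{\Z/2}(\tS^2)=0$ and $K^*_{\Z/2}(\tS^2\times(U\cap V))\cong K^*(\SSS^2)$, the sequence degenerates, $\Delta_{\Z/2}$ is onto, $K^1_{\Z/2}(\tS^2\times\tT)=0$, and $K^0_{\Z/2}(\tS^2\times\tT)$ is identified with $\Ker\Delta_{\Z/2}\cong\Z^6$, whose explicit generators are then matched with the six listed classes. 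You instead invoke the splitting (\ref{isom}) and compute $K^{*}_\pm(\tS^2)$ from the six-term sequence of $(\tS^2\times J,\tS^2\times\partial J)$; your identification of the restriction to $\tS^2\times\partial J$ with the forgetful map $K^0_{\Z/2}(\tS^2)\to K^0(\SSS^2)$ is correct, and the resulting computation $K^{-1}_\pm(\tS^2)\cong\Ker f\cong\Z^2$, $K^0_\pm(\tS^2)=0$ does give $\Z^6$ and $0$ as claimed. This is a legitimate alternative; note only that it outsources the hard content to the splitting (\ref{isom}) quoted from Gomi, whereas the paper's Mayer--Vietoris argument is self-contained at this step (and, in effect, reproves the relevant instance of (\ref{isom}), cf.\ Remark~\ref{rem3.14}).

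The gap is in your final verification of the generators. Checking that the map $\Z^6\to R^{\oplus 4}\oplus\Z^2$ spanned by the six classes has rank $6$ only shows that they are linearly independent, i.e.\ that they generate a \emph{finite-index} subgroup of $K^0_{\Z/2}(\tS^2\times\tT)\cong\Z^6$; it cannot rule out index $2$, say, and so does not prove they form a set of generators. (The paper avoids this by exhibiting $K^0_{\Z/2}(\tS^2\times\tT)$ as $\Ker\Delta_{\Z/2}$ with an explicit basis and checking the six classes map exactly onto that basis under the injection $\varphi_{\Z/2}$.) Your sketch of the fix is in the right direction but needs to be made precise: having split off $K^0_{\Z/2}(\tS^2)=\Z\{\uC_0,\uC_1,\tH_{12},\uC_1\tH_{12}\}$, you must show that $L_3-\uC_0$ and $\tH_{12}(L_3-\uC_0)$, which do lie in the kernel $N$ of restriction to $\tS^2\times\{-1\}$, actually generate $N\cong K^{-1}_\pm(\tS^2)$. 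This can be done by pulling back along $(\tS^2\times J,\tS^2\times\partial J)\to(\tS^2\times\tT,\tS^2\times\{-1\})$ and composing with the injection $K^0_{\Z/2}(\tS^2\times J,\tS^2\times\partial J)\hookrightarrow K^0_{\Z/2}(\tS^2)$ from your own six-term sequence: since $L|_{1}\cong\C_1$ and $L|_{-1}\cong\C_0$, the two classes land on $\uC_1-\uC_0$ and $\uC_1\tH-\tH$, which is a basis of $\Ker f$. With that step supplied, your argument is complete.
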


\begin{proof}
By using $U$ and $V$ in (\ref{UV}), we set $U' = \tS^2 \times U$ and $V' = \tS^2 \times V$.
Let us consider the Mayer--Vietoris exact sequence for $\{ U', V' \}$.
By Lemma~\ref{lem3.7}, we obtain the following exact sequence:
\begin{equation*}
	0 \to K^0_{\Z/2}(\tS^2 \times \tT) \overset{\varphi_{\Z/2}}{\longrightarrow} K^0_{\Z/2}(U') \oplus K^0_{\Z/2}(V') \overset{\Delta_{\Z/2}}{\longrightarrow} K^0_{\Z/2}(U' \cap V') \to K^1_{\Z/2}(\tS^2 \times \tT) \to 0.
\end{equation*}
Note that $K^0_{\Z/2}(U')$ and $K^0_{\Z/2}(V')$ are isomorphic to $K^0_{\Z/2}(\tS^2)$.
The group $K^0_{\Z/2}(U' \cap V')$ is isomorphic to $K^0(\SSS^2) \cong \Z^2$, which is generated by $\uC$ and $\tH$.
The map $\Delta_{\Z/2}$ is surjective, and its kernel is isomorphic to $\Z^6$, which computes $K^n_{\Z/2}(\tS^2 \times \tT)$.
The kernel of $\Delta_{\Z/2}$ is generated by the following elements:
\begin{equation*}
	(\uC_0, \uC_0), \ (\uC_1, \uC_1), \ (\tH, \tH), \ (\uC_1 \tH, \uC_1 \tH), \ (\uC_0, \uC_1), \ (\tH, \uC_1 \tH),
\end{equation*}
The elements of the group $K^0_{\Z/2}(\tS^2 \times \tT)$ in the statement map to the above pairs via the map $\varphi_{\Z/2}$ and therefore provide the generators of the group $K^0_{\Z/2}(\tS^2 \times \tT)$.
\end{proof}

Let $\pi_{12}$, $\pi_3$ and $\pi_4$ be projections from $\tS^2 \times \tT \times \tT$ to $\tS^2$ and the second and third factors of $\tT$, respectively.
We set $\tH_{12} = \pi_{12}^* \tH$, $H_{34} = (\pi_3, \pi_4)^*H$ and $L_i = \pi_i^* L$ for $i=3,4$.

\begin{lemma}
\label{S2S1S1}
The group $K^0_{\Z/2}(\tS^2 \times \tT \times \tT)$ is isomorphic to $\Z^{12}$, whose generators are given by
\begin{equation}
\label{generators2}
	\{ \uC_0, \uC_1, \tH_{12}, \uC_1\tH_{12}, H_{34}, \uC_1H_{34}, L_3, L_4,
	\tH_{12} L_3, \tH_{12} L_4, \tH_{12} H_{34}, \uC_1 \tH_{12} H_{34}\}. \hspace{-2mm}
\end{equation}
We also have that
$K^1_{\Z/2}(\tS^2 \times \tT \times \tT) = 0$.
\end{lemma}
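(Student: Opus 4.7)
The plan is to adapt the Mayer--Vietoris strategy of Lemma~\ref{lem3.8} to our setting. Using the $\Z/2$-subspaces $U, V \subset \tT$ from (\ref{UV}), I would decompose the last $\tT$-factor of $X = \tS^2 \times \tT \times \tT$ by setting $U' = \tS^2 \times \tT \times U$ and $V' = \tS^2 \times \tT \times V$. Since each of $U$ and $V$ is $\Z/2$-equivariantly contractible to its unique fixed point, Lemma~\ref{lem3.8} yields $K^0_{\Z/2}(U') \cong K^0_{\Z/2}(V') \cong K^0_{\Z/2}(\tS^2 \times \tT) \cong \Z^6$ and $K^1_{\Z/2}(U') = K^1_{\Z/2}(V') = 0$. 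On $U' \cap V' = \tS^2 \times \tT \times \{ \sqrt{-1}, -\sqrt{-1} \}$ the involution swaps the two endpoints freely, so projection to either one gives $K^n_{\Z/2}(U' \cap V') \cong K^n(\SSS^2 \times \T)$, which by K\"unneth is $\Z^2$ in both degrees: for $n=0$ generated by $\uC$ and $\tH$, and for $n=1$ by classes pulled from $K^1(\T)$.

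Assembling these inputs into the six-term Mayer--Vietoris sequence gives
\begin{equation*}
0 \to K^1(\SSS^2 \times \T) \to K^0_{\Z/2}(X) \xrightarrow{\varphi_{\Z/2}} K^0_{\Z/2}(U') \oplus K^0_{\Z/2}(V') \xrightarrow{\Delta_{\Z/2}} K^0(\SSS^2 \times \T) \to K^1_{\Z/2}(X) \to 0,
\end{equation*}
where $\Delta_{\Z/2}$ is the difference of the two restrictions to $U' \cap V'$. First I would check that $\Delta_{\Z/2}$ is surjective: the classes $\uC_0$ and $\tH_{12}$ in $K^0_{\Z/2}(\tS^2 \times \tT)$ restrict to $\uC$ and $\tH$ in $K^0(\SSS^2 \times \T)$, so placing each on the $U'$-coordinate alone produces preimages of the generators. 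This forces $K^1_{\Z/2}(X) = 0$ and gives $\ker(\Delta_{\Z/2}) \cong \Z^{10}$. Since this kernel is free abelian, the short exact sequence $0 \to \Z^2 \to K^0_{\Z/2}(X) \to \Z^{10} \to 0$ splits, so $K^0_{\Z/2}(X)$ is free of rank $12$.

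To verify the listed classes generate the group, I would compute the image of each under $\varphi_{\Z/2}$ using the restriction formulas of Sects.~\ref{Sect3.3.2}--\ref{Sect3.3.4}. The six classes $\{ \uC_0, \uC_1, \tH_{12}, \uC_1\tH_{12}, L_3, \tH_{12}L_3 \}$ are pullbacks from $\tS^2 \times \tT$ along the projection forgetting the last $\tT$-factor, so they land in the diagonal image of $K^0_{\Z/2}(\tS^2 \times \tT)$ inside $K^0_{\Z/2}(U') \oplus K^0_{\Z/2}(V')$ and recover the six diagonal generators. The remaining six, $\{ H_{34}, \uC_1 H_{34}, L_4, \tH_{12}L_4, \tH_{12}H_{34}, \uC_1\tH_{12}H_{34} \}$, involve the new $\tT$-factor; using the Sect.~\ref{Sect3.3.3} formulas $\varphi_{\Z/2}(L) = (L,L)$, $\varphi_{\Z/2}(H) = (\uC_0, L)$, $\varphi_{\Z/2}(\uC_1 H) = (\uC_1, \uC_0 + \uC_1 - L)$, together with the tensor structure, one tabulates their images and checks that they generate, together with the first six, the whole of $K^0_{\Z/2}(X)$.

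The main obstacle is bookkeeping the coboundary: unlike in Lemma~\ref{lem3.8}, here $K^1_{\Z/2}(U' \cap V') \cong \Z^2$ is nontrivial and its image injects into $K^0_{\Z/2}(X)$ as a rank-$2$ summand that is invisible through $\varphi_{\Z/2}$ alone. Two of the generators involving $L_4$ or $H_{34}$ must therefore account for this summand, while the other four supply off-diagonal elements of $\ker(\Delta_{\Z/2})$. Disentangling the two roles requires careful comparison of the restrictions with the clutching construction of the coboundary, but it is a finite tabulation once the restriction formulas of the previous subsections are in place.
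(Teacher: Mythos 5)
Your decomposition, the resulting six-term Mayer--Vietoris sequence, the surjectivity of $\Delta_{\Z/2}$, and the rank count $12 = 2 + 10$ all agree with the paper's proof, and computing $\varphi_{\Z/2}$ on the twelve listed classes is the right first step. The gap sits exactly where you flag the ``main obstacle,'' and your proposed remedy does not work as stated. Showing that the twelve classes surject onto $\Ker\Delta_{\Z/2}\cong\Z^{10}$ only proves they generate $K^0_{\Z/2}(\tS^2\times\tT\times\tT)$ modulo the summand $\xi_{\Z/2}\bigl(K^1_{\Z/2}(U'\cap V')\bigr)\cong\Z^2$; to finish you must show that the kernel of the map from the free group $K\cong\Z^{12}$ on your classes onto $\Ker\Delta_{\Z/2}$ maps \emph{onto} that $\Z^2$, not merely into a finite-index subgroup. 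No amount of tabulating restriction formulas can decide this, since by exactness every element of that summand restricts to zero on both $U'$ and $V'$ --- it is invisible to $\varphi_{\Z/2}$ by construction, as you yourself observe. Moreover it is not ``two of the generators involving $L_4$ or $H_{34}$'' that account for the summand: the relevant kernel is generated by the combinations $-\uC_0-\uC_1+H_{34}+\uC_1H_{34}$ and $-\tH_{12}-\uC_1\tH_{12}+\tH_{12}H_{34}+\uC_1\tH_{12}H_{34}$, in which $L_4$ and $\tH_{12}L_4$ do not appear at all.

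The paper closes this gap by comparing with the nonequivariant Mayer--Vietoris sequence through the forgetful maps. One computes the images of the two combinations above in $K^0(\SSS^2\times\T^2)$ and finds they span $2\Z(\uC-H_{34})\oplus 2\Z\,\tH_{12}(\uC-H_{34})$; one checks this coincides with the image of $K^1_{\Z/2}(U'\cap V')$ under the composite of the forgetful map with the nonequivariant coboundary $\xi$, and since that composite is injective, the two combinations generate the entire coboundary summand. The five lemma then yields that $\iota\colon K\to K^0_{\Z/2}(\tS^2\times\tT\times\tT)$ is an isomorphism. The factor of $2$ appearing in this intermediate computation shows the index question you deferred is not vacuous, so some version of this comparison (or a direct computation of the connecting map) is genuinely required to complete your argument.
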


\begin{proof}
The proof proceeds as in \cite[Lemma~4.4]{Gomi15}.
Let $K$ be the free abelian group generated by the twelve elements in (\ref{generators2}), and let $\iota \colon K \to K^0_{\Z/2}(\tS^2 \times \tT \times \tT)$ be the natural homomorphism.
By using $U$ and $V$ in (\ref{UV}), let $U'' = \tS^2 \times \tT \times U$ and $V'' = \tS^2 \times \tT \times V$.
By Lemma~\ref{lem3.8}, the Mayer--Vietoris sequence for the pair $\{ U'', V'' \}$ leads to the following exact sequence:
\begin{align*}
0 \to K^1_{\Z/2}(U'' \cap V'') \overset{\xi_{\Z/2}}{\longrightarrow} &K^0_{\Z/2}(\tS^2 \times \tT \times \tT) \overset{\varphi_{\Z/2}}{\longrightarrow} K^0_{\Z/2}(U'') \oplus K^0_{\Z/2}(V'')\\
	&\overset{\Delta_{\Z/2}}{\longrightarrow} K^0_{\Z/2}(U'' \cap V'') \to K^1_{\Z/2}(\tS^2 \times \tT \times \tT) \to 0.
\end{align*}
Note that $K^0_{\Z/2}(U'')$ and $K^0_{\Z/2}(V'')$ are isomorphic to $K^0_{\Z/2}(\tS^2 \times \tT) \cong \Z^6$.
The group $K^0_{\Z/2}(U'' \cap V'')$ is isomorphic to $K^0(\SSS^2 \times \T) \cong \Z^2$, which is generated by $\uC$ and $\tH_{12}$.
The map $\Delta_{\Z/2}$ is surjective; therefore, $K^1_{\Z/2}(\tS^2 \times \tT \times \tT) = 0$.
The kernel of $\Delta_{\Z/2}$ is isomorphic to $\Z^{10}$.
By using the results contained in Sect.~\ref{Sect3.3.3}, we can compute the map $\varphi_{\Z/2} \circ \iota$.
For example, we have
\begin{gather*}
	\varphi_{\Z/2} \circ \iota(\tH_{12}) = (\tH_{12}, \tH_{12}), \ \ \
	\varphi_{\Z/2} \circ \iota(H_{34}) = (\uC_0, L_3),\\
	\varphi_{\Z/2} \circ \iota(L_4) = (\uC_0, \uC_1),\ \ \
	\varphi_{\Z/2} \circ \iota(\tH_{12}H_{34}) = (\tH_{12}, \tH_{12}L_3).
\end{gather*}
As a result, we see that the map $\varphi_{\Z/2} \circ \iota$ is a surjection from $K$ to $\Ker \Delta_{\Z/2}$, and its kernel $K'$ is isomorphic to $\Z^2$ generated by the following elements:
\begin{equation*}
	-\uC_0 - \uC_1 + H_{34} + \uC_1 H_{34}, \ \ -\tH_{12} - \uC_1 \tH_{12} + \tH_{12} H_{34} + \uC_1 \tH_{12} H_{34}.
\end{equation*}
We consider the following commutative diagram of exact rows:
\[\xymatrix{
0 \ar[r]& K' \ar@{^{(}->}[r] \ar[d]_{\iota'} & K \ar[r]^{\varphi_{\Z/2} \circ \iota \hspace{3mm}} \ar[d]^{\iota} & \Ker \Delta_{\Z/2} \ar[r] \ar@{=}[d] & 0\\
0 \ar[r]& K^1_{\Z/2}(U'' \cap V'') \ar[r]^{\xi_{\Z/2}} \ar[d]_{f_{U'' \cap V''}} & K^0_{\Z/2}(\tS^2 \times \tT^2) \ar[r]^{\hspace{2mm}\varphi_{\Z/2}} \ar[d]^{f_{\SSS^2 \times \T^2}} & \Ker \Delta_{\Z/2} \ar[r] \ar[d]^f & 0\\
	& K^1(U'' \cap V'') \ar[r]^{\xi} & K^0(\SSS^2 \times \T^2) \ar[r]^{\hspace{5mm}\varphi} & \Ker \Delta \ar[r] & 0
}\]
where the map $\iota'$ is induced from $\iota$, the third row comes from the Mayer--Vietoris sequence for (nonequivariant) $K$-theory, and $f_{U'' \cap V''}$, $f_{\SSS^2 \times \T^2}$ and $f$ are forgetful maps.
By computing images of the generators of $K'$, we find that
\begin{align*}
	\xi \circ f_{U'' \cap V''}(\iota'(K')) &= f_{\SSS^2 \times \T^2} \circ \iota (K')\\
		&= 2\Z (\uC - H_{34}) \oplus 2\Z \tH_{12}(\uC - H_{34})\\
		&= \xi \circ f_{U'' \cap V''} (K^1_{\Z/2}(U'' \cap V'')).
\end{align*}
Since $\xi \circ f_{U'' \cap V''}$ is injective, we have that $\iota'(K') = K^1_{\Z/2}(U'' \cap V'')$.
Since $K^1_{\Z/2}(U'' \cap V'')$ is isomorphic to $\Z^2$, the map $\iota'$ is bijective.
By the five lemma, $\iota$ is bijective.
\end{proof}

The group $K^n_{\Z/2}(\tT \times \tS^2 \times \tT)$ is computed similarly.
The result is as follows.
\begin{lemma}
\label{S1S2S1}
The group $K^0_{\Z/2}(\tT \times \tS^2 \times \tT)$ is isomorphic to $\Z^{12}$, whose generators are given by
\begin{equation*}
	\{ \uC_0, \uC_1, \tH_{23}, \uC_1\tH_{23}, H_{14}, \uC_1H_{14}, L_1, L_4, \tH_{23} L_1, \tH_{23} L_4, \tH_{23} H_{14}, \uC_1 \tH_{23} H_{14} \}.
\end{equation*}
We also have that $K^1_{\Z/2}(\tT \times \tS^2 \times \tT) = 0$.
\end{lemma}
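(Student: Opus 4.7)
The plan is to repeat the Mayer--Vietoris argument from the proof of Lemma~\ref{S2S1S1}, now applied to a cover of the third factor of $\tT \times \tS^2 \times \tT$. First I would set $U'' = \tT \times \tS^2 \times U$ and $V'' = \tT \times \tS^2 \times V$, where $U$ and $V$ are the $\Z/2$-subspaces of $\tT$ defined in (\ref{UV}). Since $U$ and $V$ are $\Z/2$-equivariantly contractible to the fixed points $-1$ and $1$ respectively, both $U''$ and $V''$ are $\Z/2$-equivariantly homotopy equivalent to $\tT \times \tS^2$, which is $\Z/2$-homeomorphic to $\tS^2 \times \tT$ via the swap. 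Hence Lemma~\ref{lem3.8} yields $K^0_{\Z/2}(U'') \cong K^0_{\Z/2}(V'') \cong \Z^6$ and $K^1_{\Z/2}(U'') = K^1_{\Z/2}(V'') = 0$. The intersection $U'' \cap V''$ is $\Z/2$-homeomorphic to $\tT \times \tS^2 \times \Z/2$ with the free swap action on the last factor, so $K^n_{\Z/2}(U'' \cap V'') \cong K^n(\T \times \SSS^2)$, which is $\Z^2$ for both $n=0$ and $n=1$.

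Plugging these into the Mayer--Vietoris sequence gives
\begin{align*}
0 \to K^1_{\Z/2}(U'' \cap V'') \overset{\xi_{\Z/2}}{\longrightarrow} &K^0_{\Z/2}(\tT \times \tS^2 \times \tT) \overset{\varphi_{\Z/2}}{\longrightarrow} K^0_{\Z/2}(U'') \oplus K^0_{\Z/2}(V'')\\
	&\overset{\Delta_{\Z/2}}{\longrightarrow} K^0_{\Z/2}(U'' \cap V'') \to K^1_{\Z/2}(\tT \times \tS^2 \times \tT) \to 0.
\end{align*}
Exhibiting explicit preimages of the two generators $\uC$ and $\tH_{23}$ of $K^0(\T \times \SSS^2)$ under $\Delta_{\Z/2}$ (for instance $(\uC_0, 0)$ and $(\tH_{23}, 0)$) shows that $\Delta_{\Z/2}$ is surjective, which forces $K^1_{\Z/2}(\tT \times \tS^2 \times \tT) = 0$ and identifies $\Ker \Delta_{\Z/2}$ as a free abelian group of rank $10$.

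Next, let $K$ be the free abelian group on the twelve candidate generators and $\iota \colon K \to K^0_{\Z/2}(\tT \times \tS^2 \times \tT)$ the natural homomorphism. Using the restriction formulas from Sects.~\ref{Sect3.3.2}--\ref{Sect3.3.4}, I would compute $\varphi_{\Z/2} \circ \iota$ on each candidate and verify that its image spans $\Ker \Delta_{\Z/2}$, with kernel $K' \cong \Z^2$ generated, in analogy with the proof of Lemma~\ref{S2S1S1}, by
\begin{equation*}
	-\uC_0 - \uC_1 + H_{14} + \uC_1 H_{14}, \qquad
	-\tH_{23} - \uC_1 \tH_{23} + \tH_{23} H_{14} + \uC_1 \tH_{23} H_{14}.
\end{equation*}
The proof is then completed by the five lemma applied to the commutative diagram comparing the equivariant and nonequivariant Mayer--Vietoris sequences via the forgetful maps (mirroring the three-row diagram at the end of the proof of Lemma~\ref{S2S1S1}). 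The induced map $\iota' \colon K' \to K^1_{\Z/2}(U'' \cap V'')$ is bijective because both groups inject into $K^0(\T^2 \times \SSS^2)$ through the composition of the forgetful map and the nonequivariant Bott connecting map $\xi$, with coincident images $2\Z(\uC - H_{14}) \oplus 2\Z \tH_{23}(\uC - H_{14})$.

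The main obstacle is the routine but tedious bookkeeping in the third step: verifying that the twelve candidates restrict to an exhaustive spanning set for $\Ker \Delta_{\Z/2}$ without missing relations requires patiently evaluating each candidate on $U''$, $V''$, and $U'' \cap V''$ using the formulas for how $L$ and $H$ restrict to the fixed points of $\tT$. However, since the template of Lemma~\ref{S2S1S1} applies essentially verbatim, the only genuine new input is the identification $K^0_{\Z/2}(\tT \times \tS^2) \cong K^0_{\Z/2}(\tS^2 \times \tT)$ via the factor swap, after which the argument proceeds by pattern matching.
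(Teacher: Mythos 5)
Your proposal is correct and follows exactly the route the paper intends: the paper's own proof of this lemma is the single sentence ``computed similarly'' (to Lemma~\ref{S2S1S1}), and your argument is precisely that Mayer--Vietoris template transplanted via the swap $\tT \times \tS^2 \cong \tS^2 \times \tT$, with the correct analogues $H_{34} \rightsquigarrow H_{14}$, $\tH_{12} \rightsquigarrow \tH_{23}$ throughout, including the rank count $6+6-2=10$ for $\Ker\Delta_{\Z/2}$, the two relation elements spanning $K'$, and the five-lemma conclusion.
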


\subsubsection{The map $\psi_1$}
\label{Sect3.3.6}
We next compute the map
$\psi_1 = (k \times 1)^* \colon K^0_{\Z/2}(\tS^2 \times \tT^2) \to K^0_{\Z/2}(\tT^3)$
starting from the following lemma.
Let $k \colon \tT \to \tS^2$ be the inclusion.
\begin{lemma}
\label{lemma1}
The map $k^* \colon K^0_{\Z/2}(\tS^2) \to K^0_{\Z/2}(\tT)$ satisfies $k^*(\tH) = L$.
\end{lemma}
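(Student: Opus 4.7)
The approach is an explicit line-bundle computation. First, I identify the inclusion $k$: under $\tS^2 \cong \CP^1$ via $z \leftrightarrow [z:1]$, the relation $z^{-1} = \bar{z}$ on the unit circle shows that the involution $[z:w] \mapsto [w:z]$ restricts to $z \mapsto \bar{z}$ on $k(\tT)$, so $k$ is the $\Z/2$-equivariant embedding $\tT \to \CP^1$, $z \mapsto [z:1]$. The plan is then to trivialize $k^*T$ using a nonvanishing section, transfer the $\Z/2$-action to the trivialization, dualize to obtain $k^*\tH$, and finally construct an explicit equivariant gauge identifying $k^*\tH$ with $L$.

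Concretely, the section $s(z) = (z,1) \in \C^2$ is holomorphic and nonvanishing on the affine chart $\{w \neq 0\}$, which contains $k(\tT)$; it therefore trivializes $k^*T \cong \tT \times \C$ via $\lambda \cdot s(z) \leftrightarrow (z,\lambda)$. I then compute the bundle involution in this trivialization: the $\C^2$-involution $(v_1,v_2) \mapsto (-v_2,-v_1)$ sends $s(z) = (z,1)$ to $(-1,-z)$, and because $z\bar{z} = 1$ on $\T$ this equals $-z \cdot s(\bar{z})$. Hence the action on $k^*T \cong \tT \times \C$ is $(z,\lambda) \mapsto (\bar{z}, -z\lambda)$. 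Passing to the dual by the usual convention $(\sigma^\vee \xi)(v) = \xi(\sigma v)$, the induced action on $k^*\tH \cong \tT \times \C$ becomes $(z,a) \mapsto (\bar{z}, -\bar{z}\, a)$.

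It remains to compare with $L$, whose involution is $(z,v) \mapsto (\bar{z}, -zv)$. A $\Z/2$-equivariant line bundle isomorphism $\phi \colon k^*\tH \to L$ covering the identity is encoded by a nowhere-vanishing continuous function $\phi \colon \tT \to \C^\times$ intertwining the two actions, i.e.\ satisfying $-z\,\phi(z) = -\bar{z}\,\phi(\bar{z})$, which rearranges to $\phi(\bar{z}) = z^2 \phi(z)$. The candidate $\phi(z) = \bar{z}$ works, since both sides equal $z$, and it is plainly nonvanishing on $\tT$. This gives a $\Z/2$-equivariant line bundle isomorphism $k^*\tH \cong L$ and therefore $k^*[\tH] = [L]$ in $K^0_{\Z/2}(\tT)$.

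The main obstacle here is purely bookkeeping: correctly tracking how the $\C^2$-involution descends to a trivialization of the tautological bundle, then transferring this cleanly to the dual, and finally identifying the right gauge function to intertwine with the definition of $L$. No deeper $K$-theoretic machinery is required.
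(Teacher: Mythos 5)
Your proof is correct, but it takes a genuinely different route from the paper. The paper's argument works at the level of $K$-classes: it computes the fibers of $\tH$ at the two fixed points ($\tH_1 \cong \C_1$, $\tH_{-1} \cong \C_0$), notes the underlying nonequivariant bundle is trivial, and concludes $k^*(\tH) = L$ because the homomorphism $F_1 \colon K^0_{\Z/2}(\tT) \to K^0(\T) \oplus R_1 \oplus R_{-1}$ is injective (a computational input from \cite{Gomi15}). You instead construct an explicit $\Z/2$-equivariant isomorphism of line bundles $k^*\tH \cong L$, and your bookkeeping checks out: the section $s(z)=(z,1)$ does trivialize $k^*T$, the $\C^2$-involution sends $s(z)$ to $-z\,s(\bar z)$ on the unit circle, dualizing turns the cocycle $-z$ into $-\bar z$, and the gauge $\phi(z)=\bar z$ satisfies the intertwining relation $\phi(\bar z)=z^2\phi(z)$. (As a sanity check, your formula $(z,a)\mapsto(\bar z,-\bar z a)$ gives $\C_1$ at $z=1$ and $\C_0$ at $z=-1$, matching the fiber computation in the paper's proof.) Your argument proves slightly more --- an actual equivariant bundle isomorphism rather than equality in the $K$-group --- and avoids invoking the injectivity of $F_1$; the paper's argument is shorter given that $F_1$ is already set up and used repeatedly elsewhere in Sect.~3.3.
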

\begin{proof}
At the $\Z/2$-fixed points on $\tS^2$, the fibers of the bundle $\tH$ are $\tH_1 \cong \C_1$ and $\tH_{-1} \cong \C_0$.
Therefore, we have that $F_1 (k^*(\tH)) = (\underline{\C}; t, 1)$, which is the same as $F_1(L)$ (see the table in \cite[Sect.~4.2]{Gomi15}).
Since $F_1$ is injective, $k^*(\tH) = L$ holds.
\end{proof}

In what follows, we write $\thh_{ij} = \uC_0 - \tH_{ij}$, $h_{ij} = \uC_0 - H_{ij}$ and $l_j = \uC_0 - L_j$ to simplify the notations.

\begin{lemma}
\label{psi1}
The map $\psi_1$ satisfies
\begin{align*}
\psi_1(1) &= 1, &
\psi_1(t) &= t, &
\psi_1(\thh_{12}) &= l_1,
\\
\psi_1(t \thh_{12}) &= -l_1, &
\psi_1(h_{34}) &= h_{23}, &
\psi_1(t h_{34}) &= t h_{23},
\\
\psi_1(l_3) &= l_2, &
\psi_1(l_4) &= l_3, &
\psi_1(\thh_{12}l_3) &= (1-t)h_{12},
\\
\psi_1(\thh_{12}l_4) &= (1-t)h_{13}, &
\psi_1(\thh_{12} h_{34}) &= h_{12}l_3, &
\psi_1(t \thh_{12} h_{34}) &= -h_{12} l_3.
\end{align*}
\end{lemma}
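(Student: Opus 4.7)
Since $\psi_1 = (k \times \id_{\tT^2})^*$ with $k \colon \tT \to \tS^2$ the inclusion, $\psi_1$ is a ring homomorphism, so the plan is to evaluate it on each of the twelve generators listed in Lemma~\ref{S2S1S1}. The three ingredients are (a) naturality of pullback with respect to the coordinate projections from $\tS^2 \times \tT^2$ and $\tT^3$; (b) Lemma~\ref{lemma1} in the form $k^*\tH = L$; and (c) the relations (\ref{eq1}) and (\ref{eq3}) in $K^0_{\Z/2}(\tT^3)$, with the injectivity of $F_3$ from Sect.~\ref{Sect3.3.4} available as backup.

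Naturality combined with Lemma~\ref{lemma1} immediately yields $\psi_1(\tH_{12}) = L_1$, $\psi_1(H_{34}) = H_{23}$, $\psi_1(L_3) = L_2$, $\psi_1(L_4) = L_3$, and $\psi_1(\uC_i) = \uC_i$ for $i=0,1$. Subtracting from $\uC_0$ gives the formulas for $\psi_1(\thh_{12})$, $\psi_1(h_{34})$, $\psi_1(l_3)$, and $\psi_1(l_4)$. The entries carrying the coefficient $t = \uC_1$ all follow from the identity $(1+t) l = 0$ in $K^0_{\Z/2}(\tT)$ derived from (\ref{eq1}), which pulls back to $t l_1 = -l_1$ and $t l_3 = -l_3$ on $\tT^3$; these produce $\psi_1(t \thh_{12}) = t l_1 = -l_1$ and $\psi_1(t h_{34}) = t h_{23}$ by multiplicativity, and, once the corresponding quadratic entry is in place, $\psi_1(t \thh_{12} h_{34}) = t \, h_{12} l_3 = -h_{12} l_3$.

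The three quadratic entries reduce, via multiplicativity of $\psi_1$, to three identities in $K^0_{\Z/2}(\tT^3)$: (i) $l_1 h_{23} = h_{12} l_3$, (ii) $l_1 l_2 = (1-t) h_{12}$, and (iii) $l_1 l_3 = (1-t) h_{13}$. Identity (i) is precisely the rightmost equality of (\ref{eq3}). Identities (ii) and (iii) are not among the given relations, and the plan is to verify them by invoking the injectivity of $F_3$. In both cases the forgetful image vanishes (since $f(l_i)=0$ and $f(1-t)=0$ in $K^0(\T^3)$), while a direct reading of Table~\ref{table3} shows that both sides evaluate to $(1-t)^2 = 2(1-t)$ at $(1,1,\pm 1)$ and to $0$ at the other six fixed points for (ii), and analogously to $2(1-t)$ at $(1,\pm 1,1)$ and zero elsewhere for (iii).

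The main, if mild, obstacle lies exactly in identities (ii) and (iii): the products $l_1 l_2$ and $l_1 l_3$ are not among the twelve generators of $K^0_{\Z/2}(\tT^3)$, so a nontrivial re-expression in the generating set is required, and this rewriting is precisely what the eight-fixed-point check through $F_3$ controls. Once these two identities are verified, the rest of the lemma reduces to routine bookkeeping of a ring-homomorphism applied to twelve monomials.
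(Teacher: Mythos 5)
Your proposal is correct and follows the paper's proof in all essentials: naturality of the pullback along $k \times 1$ together with Lemma~\ref{lemma1} to handle the linear generators, relation (\ref{eq1}) for the factors of $t$, and relation (\ref{eq3}) for $\psi_1(\thh_{12}h_{34}) = l_1 h_{23} = h_{12}l_3$. The one place you diverge is the pair of identities $l_1 l_2 = (1-t)h_{12}$ and $l_1 l_3 = (1-t)h_{13}$: the paper obtains the general relation $l_i l_j = (1-t)h_{ij}$ in $K^0_{\Z/2}(\tT^3)$ by pulling back relation (\ref{eq2}) along $(\pi_i,\pi_j)^*$, whereas you verify the two instances you need directly through the injection $F_3$ and Table~\ref{table3} (both sides give $0$ in $K^0(\T^3)$ and $2(1-t)$ at exactly the fixed points $(1,1,\pm1)$, resp.\ $(1,\pm1,1)$). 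Your fixed-point computation is accurate and is exactly the kind of argument the paper itself uses to justify (\ref{eq3}), so this substitution is harmless; citing (\ref{eq2}) just packages the same check into a previously established relation.
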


\begin{proof}
Let us consider the following commutative diagram of ring homomorphisms:
\[\xymatrix{
K^0_{\Z/2}(\tS^2 \times \T^2) \ar[r]^{\hspace{3mm} \psi_1} & K^0_{\Z/2}(\tT^3) \\
K^0_{\Z/2}(\tS^2) \ar[u]^{\pi_{12}^*} \ar[r]^{k^*} & K^0_{\Z/2}(\tT) \ar[u]_{\pi_1^*}.
}\]
According to Lemma~\ref{lemma1}, $\psi_1(\thh_{12}) = l_1$ holds.
By considering the ring homomorphism $(\pi_i,\pi_j)^* \colon K^0_{\Z/2}(\tT^2) \to K^0_{\Z/2}(\tT^3)$ for $1 \leq i < j \leq 3$ and (\ref{eq2}), we obtain the relation $l_i l_j = (1-t)h_{ij}$ in $K^0_{\Z/2}(\tT^3)$.
By using (\ref{eq1}) and (\ref{eq3}), we have
\begin{align*}
	\psi_1(t \thh_{12}) &= tl_1 = -l_1,\\
	\psi_1(\thh_{12}l_3) &= \psi_1(\thh_{12})\psi_1(l_3) = l_1 l_2 = (1-t)h_{12},\\
	\psi_1(\thh_{12}h_{34}) &= \psi_1(\thh_{12}) \psi_1(h_{34}) = l_1 h_{23} = h_{12}l_3.
\end{align*}
The other results follow in a similar manner.
\end{proof}

The map $\psi_2 := (1 \times k \times 1)^* \colon K^0_{\Z/2}(\tT \times \tS^2 \times \tT) \to K^0_{\Z/2}(\tT^3)$ is computed in a similar way, and the result is as follows.

\begin{lemma}
\label{psi2}
The map $\psi_2$ satisfies
\begin{align*}
\psi_2(1) &= 1, &
\psi_2(t) &= t, &
\psi_2(\thh_{23}) &= l_2,
\\
\psi_2(t \thh_{23}) &= -l_2, &
\psi_2(h_{14}) &= h_{13}, &
\psi_2(t h_{14}) &= t h_{13},
\\
\psi_2(l_1) &= l_1, &
\psi_2(l_4) &= l_3, &
\psi_2(\thh_{23}l_1) &= (1-t)h_{12}, \\
\psi_2(\thh_{23}l_4) &= (1-t)h_{23}, &
\psi_2(\thh_{23} h_{14}) &= h_{12}l_3, &
\psi_2(t \thh_{23} h_{14}) &= -h_{12}l_3.
\end{align*}
\end{lemma}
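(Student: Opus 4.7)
The proof plan is to imitate the proof of Lemma~\ref{psi1} verbatim, simply transported from the ``outer'' position of the $\tS^2$-factor to the ``middle'' position, since $\psi_2$ differs from $\psi_1$ only in which $\tT$-factor of $\tT^3$ we identify via $k$ with the Riemann sphere.

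The first step is to set up the analogue of the commutative diagram used in Lemma~\ref{psi1}, namely
\[\xymatrix{
K^0_{\Z/2}(\tT \times \tS^2 \times \tT) \ar[r]^{\hspace{5mm}\psi_2} & K^0_{\Z/2}(\tT^3)\\
K^0_{\Z/2}(\tS^2) \ar[u]^{\pi_{\tS^2}^*} \ar[r]^{k^*} & K^0_{\Z/2}(\tT) \ar[u]_{\pi_2^*},
}\]
where the vertical projection on the left is to the middle factor and the vertical projection on the right is to the second $\tT$-factor of $\tT^3$. Applied to the generator $\tH$ of $K^0_{\Z/2}(\tS^2)$, Lemma~\ref{lemma1} gives $k^*(\tH) = L$, and since $\tH_{23} = \pi_{\tS^2}^*\tH$, chasing the diagram yields $\psi_2(\tH_{23}) = L_2$, equivalently $\psi_2(\thh_{23}) = l_2$. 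This is the key ``seed'' computation.

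The remaining values are then obtained by multiplicativity and bookkeeping of factors. Concretely, under $\psi_2$ the first $\tT$-factor becomes the first $\tT$-factor of $\tT^3$ and the last $\tT$-factor becomes the third one, so $\psi_2(L_1) = L_1$, $\psi_2(L_4) = L_3$, and $\psi_2(H_{14}) = H_{13}$ by functoriality of Whitney product of pullbacks. Combining these with the seed $\psi_2(\thh_{23}) = l_2$ and the relation (\ref{eq1}) in the form $tl_2 = -l_2$ yields $\psi_2(t\thh_{23}) = -l_2$ and $\psi_2(t\thh_{23}h_{14}) = -l_2h_{13}$, and so on.

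To rewrite the mixed products in the canonical generating set of $K^0_{\Z/2}(\tT^3)$ one uses the identity $l_il_j = (1-t)h_{ij}$, which follows from (\ref{eq2}) by pulling back along the projection $(\pi_i,\pi_j)\colon \tT^3 \to \tT^2$. This gives $\psi_2(\thh_{23}l_1) = l_2l_1 = (1-t)h_{12}$ and $\psi_2(\thh_{23}l_4) = l_2l_3 = (1-t)h_{23}$. Finally, the relation (\ref{eq3}), namely $h_{13}l_2 = h_{12}l_3$, converts $\psi_2(\thh_{23}h_{14}) = l_2h_{13}$ into $h_{12}l_3$, matching the stated formula. The only place where one must be careful is the reindexing of the subscripts as the middle $\tS^2$ collapses to the middle circle of $\tT^3$; this bookkeeping, rather than any new geometric input, is the main (and essentially the only) obstacle, and it is completely mechanical once the seed computation $\psi_2(\thh_{23}) = l_2$ is in place.
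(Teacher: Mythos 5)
Your proposal is correct and follows exactly the route the paper intends: the paper's proof of this lemma is simply the remark that $\psi_2=(1\times k\times 1)^*$ is ``computed in a similar way'' to Lemma~\ref{psi1}, and your argument is precisely that transported computation — the seed $\psi_2(\thh_{23})=l_2$ from Lemma~\ref{lemma1} via the analogous commutative square, multiplicativity, $tl=-l$ from (\ref{eq1}), $l_il_j=(1-t)h_{ij}$ from (\ref{eq2}), and $h_{13}l_2=h_{12}l_3$ from (\ref{eq3}). All the index bookkeeping checks out.
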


\subsubsection{The map $\phi_1$}
\label{Sect3.3.7}
For the map $\phi_1$ in Fig.~\ref{diagram3D}, the equivariant $K$-groups of the $\Z/2$-space $X \times \tT$ are as follows:

\begin{lemma}
\label{equivXT}
$K^0_{\Z/2}(X \times \tT) \cong \Z^{13}, \ \ K^1_{\Z/2}(X \times \tT) \cong \Z.$
\end{lemma}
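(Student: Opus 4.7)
The plan is to apply an equivariant Mayer--Vietoris sequence to the closed decomposition $X = A \cup B$, where
\begin{equation*}
A = (\D_+ \times \T) \underset{\T^2}{\cup} (\D_- \times \T) \cong \tS^2 \times \tT, \qquad B = (\T \times \D_+) \underset{\T^2}{\cup} (\T \times \D_-) \cong \tT \times \tS^2,
\end{equation*}
are closed $\Z/2$-invariant subspaces of $X$ (inherited from the ambient $\tS^2 \times \tS^2$) with $A \cap B = \tT^2$. After crossing with $\tT$, the intersection becomes $\tT^3$, and the two restriction maps appearing in the sequence are precisely $\psi_1$ and $\psi_2$ already computed in Lemmas~\ref{psi1} and~\ref{psi2}. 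By Lemmas~\ref{S2S1S1} and~\ref{S1S2S1}, both $K^0_{\Z/2}(A \times \tT)$ and $K^0_{\Z/2}(B \times \tT)$ are free of rank $12$, and their degree-one counterparts vanish; using $K^1_{\Z/2}(\tT^3) = 0$, which follows from a Mayer--Vietoris argument analogous to \cite[Lemma~4.5]{Gomi15}, the six-term sequence collapses to the short exact sequence
\begin{equation*}
0 \to K^0_{\Z/2}(X \times \tT) \to \Z^{24} \xrightarrow{\psi_1 - \psi_2} K^0_{\Z/2}(\tT^3) \to K^1_{\Z/2}(X \times \tT) \to 0.
\end{equation*}

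Next I would reduce the lemma to an integer linear algebra problem: determining the image of $\psi_1 - \psi_2$ inside $K^0_{\Z/2}(\tT^3) \cong \Z^{12}$. Using the explicit values in Lemmas~\ref{psi1} and~\ref{psi2}, the identity $t l_i = -l_i$ from~(\ref{eq1}), and the relation $h_{12} l_3 = h_{13} l_2 = h_{23} l_1$ implicit in~(\ref{eq3}), the image is generated by the eleven elements
\begin{equation*}
1, \ t, \ l_1, \ l_2, \ l_3, \ h_{13}, \ t h_{13}, \ h_{23}, \ t h_{23}, \ h_{12} l_3, \ (1-t) h_{12}.
\end{equation*}
Expanding in the basis of Table~\ref{table3} one checks that these eleven elements are $\Z$-linearly independent, hence the image is free of rank $11$. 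Since $K^0_{\Z/2}(X \times \tT)$ is the kernel of a map from the free abelian group $\Z^{24}$, it is automatically free, and a rank count yields $K^0_{\Z/2}(X \times \tT) \cong \Z^{13}$.

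The main obstacle is to verify that the cokernel $K^1_{\Z/2}(X \times \tT)$ is genuinely $\Z$ and not, say, $\Z$ with a factor of $\Z/2$: the image contains $(1-t) H_{12} = H_{12} - t H_{12}$ but neither $H_{12}$ nor $t H_{12}$ separately. I would dispatch this by showing in the basis of Table~\ref{table3} that every other basis element of $K^0_{\Z/2}(\tT^3)$ (namely $H_{13}$, $\uC_1 H_{13}$, $H_{23}$, $\uC_1 H_{23}$, $L_1$, $L_2$, $L_3$) already lies in the image individually, while the element $h_{12} l_3$ combined with~(\ref{eq3}) identifies $[H_{12} L_3]$ with $[H_{12}]$ and $(1-t) h_{12}$ identifies $[t H_{12}]$ with $[H_{12}]$ in the cokernel. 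Consequently the cokernel is cyclic of infinite order, generated by $[H_{12}]$, giving $K^1_{\Z/2}(X \times \tT) \cong \Z$.
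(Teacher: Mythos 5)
Your proposal is correct and follows essentially the same route as the paper: the same closed decomposition $X \times \tT = (\tS^2 \times \tT \times \tT) \cup (\tT \times \tS^2 \times \tT)$ with intersection $\tT^3$, the same Mayer--Vietoris sequence collapsing to a four-term exact sequence via Lemmas~\ref{S2S1S1}, \ref{S1S2S1} and $K^1_{\Z/2}(\tT^3)=0$, and the kernel/cokernel of $\psi_1-\psi_2$ computed from Lemmas~\ref{psi1} and \ref{psi2}. The paper leaves the rank-$11$ image computation and the torsion-freeness of the cokernel implicit, whereas you carry them out explicitly; your verification that the image is the free group on the eleven listed elements and that the cokernel is cyclic on $[H_{12}]$ is accurate.
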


\begin{proof}
Note that
$X \times \tT = (\tS^2 \times \tT \times \tT) \cup (\tT \times \tS^2 \times \tT)$
where $\tS^2 \times \tT \times \tT$ and $\tT \times \tS^2 \times \tT$ are $\Z/2$-subspaces of $X \times \tT$.
We apply the Mayer--Vietoris exact sequence to compute the groups.
\[\xymatrix{
K^0_{\Z/2}(X \times \tT) \ar[r]^{\phi_1 \oplus \phi_2 \hspace{2cm}}& K^0_{\Z/2}(\tS^2 \times \tT \times \tT) \oplus K^0_{\Z/2}(\tT \times \tS^2 \times \tT) \ar[r]^{\hspace{2.2cm}\psi_1 - \psi_2} & K^0_{\Z/2}(\tT^3) \ar[d]
\\
K^1_{\Z/2}(\tT^3) \ar[u] &  K^1_{\Z/2}(\tS^2 \times \tT \times \tT) \oplus K^1_{\Z/2}(\tT \times \tS^2 \times \tT) \ar[l] & K^1_{\Z/2}(X \times \tT) \ar[l]
}\]
Note that $K^1_{\Z/2}(\tT^3) = 0$ \cite[Proposition~4.1]{Gomi15}.
By Lemmas~\ref{S2S1S1}, \ref{S1S2S1}, \ref{psi1} and \ref{psi2}, we obtain the result.
\end{proof}

In the proof of Lemma~\ref{equivXT}, the map $\phi_1 \oplus \phi_2$ is injective.
When we identify the group $K^0_{\Z/2}(X \times \tT)$ as the subgroup of $K^0_{\Z/2}(\tS^2 \times \tT \times \tT) \oplus K^0_{\Z/2}(\tT \times \tS^2 \times \tT)$ through the map $\phi_1 \oplus \phi_2$, a set of generators for the group $K^0_{\Z/2}(X \times \tT)$ is given by:
\begin{align*}
x_1 &= (1,1), &
x_2 &= (t, t), &
x_3 &= (\thh_{12}, l_1),
\\
x_4 &= ((1+t)\thh_{12},0), &
x_5 &= (l_3, \thh_{23}), &
x_6 &= (0, (1+t)\thh_{23}),
\\
x_7 &= (\thh_{12}l_3, l_1 \thh_{23}), &
x_8 &= (l_4, l_4), &
x_9 &= (\thh_{12} l_4, (1-t)h_{14}),
\\
x_{10} &= ((1-t)h_{34}, \thh_{23}l_4), &
x_{11} &= (\thh_{12}h_{34}, -t \thh_{23}h_{14}), & &
\\
x_{12} &= ((1+t)\thh_{12}h_{34},0),
& x_{13} &= (0, (1+t) \thh_{23}h_{14}). &
\end{align*}

\begin{remark}
\label{rem3.14}
Note that the group $K^n_{\Z/2}(X)$ is also computed by the Mayer--Vietoris sequence, and the results are $K^0_{\Z/2}(X) \cong \Z^7$ and $K^1_{\Z/2}(X) \cong \Z$.
Through the decomposition in (\ref{isom}),
\begin{equation*}
	K^0_{\Z/2}(X \times \tT) \cong K^0_{\Z/2}(X \times \{ -1 \}) \oplus K^0_{\Z/2}(X \times \tT, X \times \{-1\})
		\cong K^0_{\Z/2}(X) \oplus K^1_\pm(X).
\end{equation*}
The elements $x_1, \ldots, x_7$ correspond to the generators of $K^0_{\Z/2}(X)$, and $x_8, \ldots, x_{13}$ correspond to those of $K^1_\pm(X)$. We will use them in Sect.~\ref{Sect4}.
\end{remark}

\subsubsection{Top horizontal maps}
Combined with the above results, we compute the composite of the top horizontal maps in Fig.~\ref{diagram3D}.

\begin{lemma}
\label{image3D}
The images of each of the generators $x_i$ of $K^0_{\Z/2}(X \times \tT)$ under the map $\psi_1 \circ \phi_1$ are as follows.
\begin{align*}
\psi_1 \circ \phi_1 (x_1) &= 1, &
\psi_1 \circ \phi_1 (x_2) &= t, &
\psi_1 \circ \phi_1 (x_3) &= l_1,
\\
\psi_1 \circ \phi_1 (x_5) &= l_2, &
\psi_1 \circ \phi_1 (x_7) &= (1-t)h_{12}, &
\psi_1 \circ \phi_1 (x_8) &= l_3,
\\
\psi_1 \circ \phi_1 (x_9) &= (1-t)h_{13}, &
\psi_1 \circ \phi_1 (x_{10}) &= (1-t)h_{23}, &
\psi_1 \circ \phi_1 (x_{11}) &= h_{12} l_3,
\end{align*}
and $\psi_1 \circ \phi_1 (x_i) = 0$ for $i=4,6,12,13$.
\end{lemma}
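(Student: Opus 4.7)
The plan is direct substitution. In the proof of Lemma~\ref{equivXT}, the map $\phi_1 \oplus \phi_2 \colon K^0_{\Z/2}(X \times \tT) \to K^0_{\Z/2}(\tS^2 \times \tT \times \tT) \oplus K^0_{\Z/2}(\tT \times \tS^2 \times \tT)$ was shown to be injective, and the generators $x_1, \ldots, x_{13}$ of $K^0_{\Z/2}(X \times \tT)$ were presented as explicit pairs under this identification. Consequently, $\phi_1(x_i)$ is simply the first coordinate of the pair $x_i$, which can be read off directly.

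Having done that, the remaining task is to evaluate $\psi_1$ on each first coordinate. First I would dispose of the two trivial cases: for $x_6 = (0, (1+t)\thh_{23})$ and $x_{13} = (0, (1+t)\thh_{23}h_{14})$, the first coordinate vanishes, so $\psi_1 \circ \phi_1(x_i) = 0$. Next, the entries $x_1, x_2, x_3, x_5, x_7, x_8, x_9, x_{11}$ have a first coordinate appearing verbatim in the table of Lemma~\ref{psi1}, so the claimed values are immediate.

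The three remaining generators $x_4$, $x_{10}$, $x_{12}$ require combining two rows of Lemma~\ref{psi1} using additivity and the ring structure. Explicitly,
\begin{align*}
\psi_1 \circ \phi_1(x_4) &= \psi_1(\thh_{12}) + \psi_1(t\thh_{12}) = l_1 + (-l_1) = 0,\\
\psi_1 \circ \phi_1(x_{10}) &= \psi_1(h_{34}) - \psi_1(t h_{34}) = h_{23} - t h_{23} = (1-t)h_{23},\\
\psi_1 \circ \phi_1(x_{12}) &= \psi_1(\thh_{12} h_{34}) + \psi_1(t\thh_{12} h_{34}) = h_{12} l_3 + (-h_{12} l_3) = 0.
\end{align*}

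There is no substantive obstacle: all of the work is already encoded in Lemmas~\ref{equivXT} and \ref{psi1}, and the proof is essentially an exercise in careful bookkeeping. If anything, the only subtle point is to remember that the displayed generators $x_i$ are elements of the \emph{subgroup} $K^0_{\Z/2}(X \times \tT) \hookrightarrow K^0_{\Z/2}(\tS^2 \times \tT \times \tT) \oplus K^0_{\Z/2}(\tT \times \tS^2 \times \tT)$, so that taking the first coordinate computes $\phi_1$ exactly (no correction term is needed).
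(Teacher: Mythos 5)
Your proposal is correct and is essentially identical to the paper's own proof: the paper likewise observes that, since the generators $x_i$ are presented through the injection $\phi_1 \oplus \phi_2$, the map $\phi_1$ is just the first projection, and then invokes Lemma~\ref{psi1}. Your explicit verification of the cases $x_4$, $x_{10}$, $x_{12}$ via additivity is a correct elaboration of what the paper leaves implicit.
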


\begin{proof}
Since we take the generators $x_i$ of $K^0_{\Z/2}(X \times \tT)$ through the injection $\phi_1 \oplus \phi_2$, the map $\phi_1$ corresponds to the first projection for $x_i$.
The result then follows from Lemma~\ref{psi1}.
\end{proof}

\begin{lemma}
\label{02}
For the generators $x_i$ of the group $K^0_{\Z/2}(X \times \tT)$, we have,
\begin{equation*}
	\mu_{3D} \circ \psi_1 \circ \phi_1 (x_{11}) = 2,
\end{equation*}
and $\mu_{3D} \circ \psi_1 \circ \phi_1 (x_i) = 0$ for $i \neq 11$.
\end{lemma}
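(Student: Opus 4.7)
The plan is to read off each image $\psi_1 \circ \phi_1(x_i)$ from Lemma~\ref{image3D} and then evaluate $\mu_{3D}$ on it by restricting to the eight fixed points $\Gamma^3_j$ of the involution on $\tT^3$ via Table~\ref{table3}, extracting the coefficient of $t$ in each $R$-value, and summing as in Definition~\ref{defind3D}. For the four indices $i \in \{4,6,12,13\}$, Lemma~\ref{image3D} already gives $\psi_1 \circ \phi_1(x_i) = 0$ (the first-factor entries of these $x_i$ contain the factor $1+t$, and $(1+t)l_j = 0$ in $K^0_{\Z/2}(\tT^3)$ by relation~(\ref{eq1})), so those cases are automatic.

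Next I would record the $\mu_{3D}$-values of the elementary building blocks. From Table~\ref{table3}, $l_j$ restricts to $1-t$ at the four fixed points whose $j$-th coordinate is $+1$ and to $0$ elsewhere, and $h_{ij}$ restricts to $1-t$ at the two fixed points whose $i$-th and $j$-th coordinates are both $+1$ and to $0$ elsewhere. Thus $\mu_{3D}(l_j) = -(4\cdot(-1)) = 4 \equiv 0 \pmod 4$, while $\mu_{3D}(1) = 0$ and $\mu_{3D}(t) = -8 \equiv 0 \pmod 4$. Using the identity $(1-t)^2 = 2(1-t)$ in $R = \Z[t]/(t^2-1)$, I get $\mu_{3D}((1-t)h_{ij}) = -(2\cdot(-2)) = 4 \equiv 0 \pmod 4$ for any $i < j$. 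Together these handle $x_1, x_2, x_3, x_5, x_7, x_8, x_9, x_{10}$.

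The remaining nontrivial case is $\psi_1 \circ \phi_1(x_{11}) = h_{12} l_3$. The supports of $h_{12}$ (the two fixed points with first and second coordinates $+1$) and of $l_3$ (the four fixed points with third coordinate $+1$) intersect in the single fixed point $(1,1,1)$. There, the restriction is $(1-t)^2 = 2 - 2t$, so $n_-((1,1,1)) = -2$ and $n_-$ vanishes at every other fixed point. Hence $\mu_{3D}(\psi_1 \circ \phi_1(x_{11})) = -(-2) = 2$.

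The argument requires no conceptual novelty — every step is a finite lookup in Table~\ref{table3} followed by arithmetic in $R$. The only genuine risk is bookkeeping: one must correctly identify, for each product $h_{ij}l_k$ or $(1-t)h_{ij}$, exactly which fixed points survive in the support, and apply the reduction $t^2 = 1$ when extracting $n_-$. Once this is done systematically across the nine explicit images in Lemma~\ref{image3D}, the lemma follows.
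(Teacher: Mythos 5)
Your proposal is correct and follows essentially the same route as the paper: the paper likewise evaluates $\mu_{3D}$ by computing the images of the elements in Lemma~\ref{image3D} under $F_3$ (i.e., their restrictions to the eight fixed points read off from Table~\ref{table3}), obtaining $2(1-t)$ at two fixed points for $(1-t)h_{ij}$ (hence $\mu_{3D}=4\equiv 0$) and $2(1-t)$ at the single fixed point $(1,1,1)$ for $h_{12}l_3$ (hence $\mu_{3D}=2$). Your bookkeeping of supports and the reduction $(1-t)^2=2(1-t)$ in $R$ all check out against the table.
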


\begin{proof}
The result is obtained by computing images of the elements in Lemma~\ref{image3D} under the map $F_3$.
For example, we have
\begin{align*}
	F_3((1-t)h_{12}) &= F_3(h_{12})  - F_3(th_{12})\\
		&= (h_{12} ; 1-t, 0,0,0, 1-t, 0,0,0) - (h_{12}; t-1, 0,0,0, t-1, 0,0,0)\\
		&= (0; 2(1-t), 0,0,0, 2(1-t), 0,0,0).
\end{align*}
Therefore, $\mu_{3D} \circ \psi_1 \circ \phi_1 (x_7) = \mu_{3D}((1-t)h_{12}) = 4 = 0$ in $\Z/4$.
By the computation of $F_3(h_{12} l_3)$ in Sect.~\ref{Sect3.3.4}, we have that $\mu_{3D} \circ \psi_1 \circ \phi_1 (x_{11}) = \mu_{3D}(h_{12} l_3) = 2$.
The images for the other generators are computed similarly (see Table~\ref{table3}).
\end{proof}

By Lemma~\ref{02}, the map $\frac{1}{2}\mu_{3D}$ in (\ref{1/2}) is surjective.

\subsection{Computation of other maps}
\label{Sect3.4}
We next focus on the other maps shown in Fig.~\ref{diagram3D}.
A part of this discussion is also used in Sect.~\ref{Sect4}, and the necessary results are included.
We first discuss some (nonequivariant) $K$-groups.

\begin{lemma}
\label{KX}
	$K^0(X) \cong \Z^3$, $K^1(X) \cong \Z^3$.
\end{lemma}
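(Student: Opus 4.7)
The plan is to apply the (nonequivariant) Mayer--Vietoris sequence to a two-piece cover of $X$ obtained by consolidating the four building blocks. Set
\begin{equation*}
U = (\D_+ \times \T) \underset{\T^2}{\cup} (\D_- \times \T) = \SSS^2 \times \T,
\quad
V = (\T \times \D_+) \underset{\T^2}{\cup} (\T \times \D_-) = \T \times \SSS^2,
\end{equation*}
so that $X = U \cup V$ and $U \cap V = \T \times \T$. Both $U$ and $V$ are subspaces of $X$; enlarge them to open neighborhoods if needed to run Mayer--Vietoris.

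By the Künneth theorem for (complex) $K$-theory, using $K^0(\SSS^2) = \Z^2$, $K^1(\SSS^2) = 0$, $K^0(\T) = K^1(\T) = \Z$, one gets
\begin{equation*}
K^0(U) \oplus K^0(V) \cong \Z^4, \quad K^1(U) \oplus K^1(V) \cong \Z^4,
\quad K^0(\T^2) \cong \Z^2, \quad K^1(\T^2) \cong \Z^2.
\end{equation*}
I would take explicit generators: $\{[\underline{\C}], \pi_1^*\tH\}$ for $K^0(U)$ (and similarly for $V$); in degree one, $\{[\underline{\C}] \cdot \beta, \pi_1^*\tH \cdot \beta\}$, where $\beta$ is the generator of $K^1(\T)$; for $K^*(\T^2)$, the natural Künneth generators $[\underline{\C}]$ and a Bott-type class in $\widetilde{K}^0(\T^2)$, together with $\pi_1^*\beta, \pi_2^*\beta$ in degree one.

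The key step is to compute the restriction homomorphisms $K^n(U) \oplus K^n(V) \to K^n(U \cap V)$. Since $\tH|_{\T} $ is trivial (the equator in $\SSS^2$ carries only trivial line bundles), in degree zero the map collapses all four generators onto $\pm [\underline{\C}]_{\T^2}$, so its image is $\Z \cdot [\underline{\C}]$, its cokernel is $\widetilde{K}^0(\T^2) \cong \Z$, and its kernel is $\Z^3$. In degree one, the same restriction sends the two generators coming from $U$ to $\pm \pi_2^*\beta$ and the two from $V$ to $\pm \pi_1^*\beta$, so the map is surjective with kernel $\Z^2$.

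Feeding this into the six-term exact sequence
\begin{equation*}
\cdots \to K^1(\T^2) \to K^0(X) \to K^0(U) \oplus K^0(V) \to K^0(\T^2) \to K^1(X) \to K^1(U)\oplus K^1(V) \to K^1(\T^2) \to \cdots
\end{equation*}
yields short exact sequences
\begin{equation*}
0 \to \mathrm{coker}(K^1\text{-restr.}) \to K^0(X) \to \mathrm{ker}(K^0\text{-restr.}) \to 0,
\end{equation*}
\begin{equation*}
0 \to \mathrm{coker}(K^0\text{-restr.}) \to K^1(X) \to \mathrm{ker}(K^1\text{-restr.}) \to 0,
\end{equation*}
which become $0 \to 0 \to K^0(X) \to \Z^3 \to 0$ and $0 \to \Z \to K^1(X) \to \Z^2 \to 0$. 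Since these outer groups are free, both sequences split, giving $K^0(X) \cong \Z^3$ and $K^1(X) \cong \Z^3$.

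The only mildly delicate point is the bookkeeping of the restriction maps (in particular, verifying that $\tH|_\T$ is trivial and identifying the Bott generator in $\widetilde{K}^0(\T^2)$ as not lying in the image), but once the Künneth splittings are in place this is a routine check.
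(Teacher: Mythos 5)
Your proposal is correct and follows essentially the same route as the paper, which likewise computes $K^n(X)$ from the Mayer--Vietoris sequence for the closed cover $\{\SSS^2\times\T,\ \T\times\SSS^2\}$ with intersection $\T^2$; your bookkeeping of the restriction maps (triviality of $\tH|_\T$, surjectivity in degree one, cokernel $\widetilde{K}^0(\T^2)\cong\Z$ in degree zero) fills in exactly the details the paper leaves implicit. The resulting extensions $0\to 0\to K^0(X)\to\Z^3\to 0$ and $0\to\Z\to K^1(X)\to\Z^2\to 0$ give the stated groups, as you say.
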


\begin{proof}
Since $X = \SSS^2 \times \T \cup \T \times \SSS^2$, the result is obtained from the Mayer--Vietoris exact sequence for the pair $\{ \SSS^2 \times \T, \T \times \SSS^2\}$.
\end{proof}

By Lemma~\ref{KX}, it follows that
\begin{equation}
\label{XtimesT}
	K^0(X \times \T) \cong K^0(X) \oplus K^1(X) \cong \Z^6.
\end{equation}
To compute the maps in Fig.~\ref{diagram3D}, it is convenient to fix an identification of $K^0(X \times \T)$ with $\Z^6$.
We use the above decomposition (\ref{XtimesT}) and fix the identifications of $K^0(X)$ and $K^1(X)$ with $\Z^3$.
For the group $K^0(X)$, a part of the Mayer--Vietoris exact sequence for $\{ \SSS^2 \times \T, \T \times \SSS^2 \}$ provides an injection $K^0(X) \to K^0(\SSS^2 \times \T) \oplus K^0(\T \times \SSS^2)$.
By using this injection, we identify $K^0(X)$ with its image and take the generators of $K^0(X)$ as $(1,1)$, $(\thh_{12}, 0)$ and $(0, \thh_{23})$.
Here $\thh_{12} = \uC - H_{12}$ follows the notation in Lemma~\ref{lem3.8}, and $\thh_{23} = \uC - H_{23}$, where $H_{23} = \pi_{23}^* \tH$ for the projection $\pi_{23} \colon \T \times \SSS^2 \to \SSS^2$.
In this way, we identify $K^0(X)$ with $\Z^3$.
We next discuss $K^1(X)$.
On the Riemann sphere $\SSS^2 = \C \cup \{ \infty \}$, we consider the orientation induced by its complex structure.
The disks $\D_+$ and $\D_-$ are also oriented accordingly as subspaces of $\SSS^2$.
We equip $\T$ with a counterclockwise orientation and the spaces $\D_\pm \times \T$ and $\T \times \D_\pm$ with product orientations.
We also equip the spaces $\SSS^3_{\pm, \pm}$ with the boundary orientation of $\D_\pm \times \D_\pm$, where the double signs correspond.
Note that the boundary orientation on $\partial \D_+ = \T$ is counterclockwise, but that on $\partial \D_-$ is clockwise.
Correspondingly, our orientation on $\T \times \D_+$ is opposite to the subspace orientation of $\T \times \D_+ \subset \SSS^3_{-+}$.
Note that the space $X$ is the union of the four solid tori (\ref{X}).
In the interior of $\D_+ \times \T$, we take a small open three-ball $B_1$ with a subspace orientation.
The quotient map $X \to X/ (X \setminus B_1) \cong \overline{B}_1 / \partial \overline{B}_1$ induces the homomorphism $e_1 \colon K^1(\overline{B}_1, \partial \overline{B}_1) \to K^1(X)$.
Similarly, we take small open three-balls $B_2$ and $B_3$ in the interiors of $\T \times \D_+$ and $\D_- \times \T$ with subspace orientations, respectively, and define the map $e_i \colon K^1(\overline{B}_i, \partial \overline{B}_i) \to K^1(X)$ for $i=2,3$.

\begin{lemma}
\label{mape}
The map $e = \sum_{i=1}^3 e_i \colon \oplus_{i=1}^3 K^1(\overline{B}_i, \partial \overline{B}_i) \to K^1(X)$ is an isomorphism.
\end{lemma}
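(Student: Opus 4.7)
The plan is to use the cofiber sequence $X \hookrightarrow \SSS^2 \times \SSS^2 \to \bigvee_{\epsilon_1, \epsilon_2 \in \{+,-\}} \SSS^4_{\epsilon_1\epsilon_2}$ coming from the decomposition of $\SSS^2 \times \SSS^2 \setminus X$ into the four open 4-cells $\D^\circ_{\epsilon_1} \times \D^\circ_{\epsilon_2}$. Applying $\tilde{K}^*$ and using $\tilde{K}^1(\SSS^2 \times \SSS^2) = 0$, the long exact sequence produces an injection $\delta \colon K^1(X) \hookrightarrow \tilde{K}^0\bigl(\bigvee_4 \SSS^4\bigr) \cong \Z^4$ whose image is the kernel of the pullback $\tilde{K}^0\bigl(\bigvee_4 \SSS^4\bigr) \to \tilde{K}^0(\SSS^2 \times \SSS^2) \cong \Z^3$. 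Since each quotient $\SSS^2 \times \SSS^2 \to \SSS^4_{\epsilon_1\epsilon_2}$ is a degree-one map (with $\SSS^4_{\epsilon_1\epsilon_2}$ oriented by the product orientation on $\D_{\epsilon_1} \times \D_{\epsilon_2}$), all four Bott classes pull back to the same top generator of $\tilde{K}^0(\SSS^2 \times \SSS^2)$, so $K^1(X) \cong \{(x_{\epsilon_1\epsilon_2}) \in \Z^4 : \sum x_{\epsilon_1\epsilon_2} = 0\}$, a free group of rank three consistent with Lemma~\ref{KX}.

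Next, I would identify $\delta \circ e_i$ as restriction to the four 3-spheres. Because the connecting map for the pair $(\D_{\epsilon_1} \times \D_{\epsilon_2}, \SSS^3_{\epsilon_1\epsilon_2})$ realizes the Bott isomorphism $K^1(\SSS^3) \cong \tilde{K}^0(\SSS^4)$, the $\SSS^4_{\epsilon_1\epsilon_2}$-component of $\delta(c)$ is simply $c|_{\SSS^3_{\epsilon_1\epsilon_2}}$. For the generator $\alpha$ of $K^1(\overline{B}_i, \partial \overline{B}_i) = \tilde{K}^1(\SSS^3)$, the class $e_i(\alpha)$ is supported near $B_i$, so this restriction equals $\pm \alpha$ precisely when $B_i \subset \SSS^3_{\epsilon_1\epsilon_2}$ and is zero otherwise, with the sign coming from comparing the subspace orientation of $B_i$ to the orientation that $\SSS^3_{\epsilon_1\epsilon_2}$ induces on the embedded solid torus containing $B_i$.

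The final step is the orientation bookkeeping using $\partial(A \times B) = \partial A \times B + (-1)^{\dim A} A \times \partial B$ together with the conventions (fixed before Lemma~\ref{mape}) that $\partial \D_+$ is counterclockwise and $\partial \D_-$ is clockwise. Each ball $B_i$ lies in a single solid torus and hence on exactly two of the four 3-spheres; one inclusion comes out orientation-preserving and the other orientation-reversing, because $\partial \D_+$ and $\partial \D_-$ carry opposite orientations. A direct computation yields $\delta(e_1(\alpha)) = (1, -1, 0, 0)$, $\delta(e_2(\alpha)) = (1, 0, -1, 0)$, and $\delta(e_3(\alpha)) = (0, 0, 1, -1)$ in the ordering $(++, +-, -+, --)$. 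In the basis $\{(1, -1, 0, 0), (0, 1, -1, 0), (0, 0, 1, -1)\}$ of the kernel $\{\sum x = 0\}$ the matrix of $\delta \circ e$ becomes $\bigl(\begin{smallmatrix} 1 & 1 & 0 \\ 0 & 1 & 0 \\ 0 & 0 & 1 \end{smallmatrix}\bigr)$, which is invertible over $\Z$; hence $\delta \circ e$ is an isomorphism, and since $\delta$ is injective, so is $e$. The main obstacle is this sign bookkeeping --- granted, the rest of the argument is a routine rank count via the cofiber sequence.
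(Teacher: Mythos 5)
Your proof is correct, and your signs match the paper's: reordering your quadruples from $(++,+-,-+,--)$ to the paper's ordering $(++,-+,--,+-)$ gives $\delta(e_1(\alpha))=(1,0,0,-1)$, $\delta(e_2(\alpha))=(1,-1,0,0)$, $\delta(e_3(\alpha))=(0,1,-1,0)$, which is exactly the map $W_3(p,q,r)=(p+q,-q+r,-r,-p)$ of Lemma~\ref{lem3.20} on the standard basis. The heart of both arguments is the same --- detect $K^1(X)$ by restriction to the boundary three-spheres $\SSS^3_{\epsilon_1\epsilon_2}$ and check that a $3\times 3$ integer matrix is invertible --- but you justify the injectivity of the detection map differently. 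The paper composes $e$ with the restriction $i$ to three of the four spheres, reads off surjectivity of $i$ from the explicit matrix, and then quotes $K^1(X)\cong\Z^3$ from the Mayer--Vietoris computation of Lemma~\ref{KX} to conclude that $i$ is bijective. You instead run the long exact sequence of the pair $(\SSS^2\times\SSS^2,X)$, using $K^1(\SSS^2\times\SSS^2)=0$ to get the injection $\delta$ outright and to identify its image as $\{\textstyle\sum x_{\epsilon_1\epsilon_2}=0\}\subset\Z^4$; this is self-contained (it reproves rather than quotes Lemma~\ref{KX}) and treats the four spheres symmetrically, at the cost of an excision/naturality argument identifying the components of $\delta$ with restrictions followed by the Bott map. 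One small remark: for this lemma alone the sign bookkeeping you single out as the main obstacle is not actually load-bearing, since the support pattern of the $e_i$ (each ball lies on exactly two of the four spheres) already forces the matrix to be triangular with units on the diagonal after discarding one coordinate; the precise signs only become essential later, for $W_3$ and the identification with the four corner and hinge indices, and yours are consistent with the paper's there as well.
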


\begin{proof}
Let $i_{\pm \pm} \colon \SSS_{\pm\pm}^3 \hookrightarrow X$ be the inclusion where double signs correspond (see (\ref{X}) and (\ref{Spm})), and
let us consider the following maps:
\begin{equation}
\oplus_{i=1}^3 K^1(\overline{B}_i, \partial \overline{B}_i) \overset{e}{\to}
	K^1(X) \overset{i}{\to}
		K^1(\SSS^3_{++}) \oplus K^1(\SSS^3_{-+}) \oplus K^1(\SSS^3_{--})
\end{equation}
where $i = i_{++}^* \oplus i_{+-}^* \oplus i_{--}^*$.
By using the orientations of $B_i$ and $\SSS^3_{\pm \pm}$, both groups $\oplus_{i=1}^3 K^1(\overline{B}_i, \partial \overline{B}_i)$ and $K^1(\SSS^3_{++}) \oplus K^1(\SSS^3_{-+}) \oplus K^1(\SSS^3_{--})$ are identified with $\Z^3$ via the three-dimensional winding numbers.
Through these identifications, the map $i \circ e$ is computed as follows\footnote{Note that our open three-ball $B_i$ is contained in some solid torus, and that each of our three-spheres $\SSS^3_{\pm\pm}$ is the union of two solid tori. The orientation of the three-ball $B_i$ can be the same as or opposite to that of the three-sphere $\SSS^3_{\pm\pm}$, which leads to a positive or negative sign in (\ref{sign}), respectively.}:
\begin{equation}
\label{sign}
	i \circ e (p,q,r) = (p+q, -q+r, -r),
\end{equation}
where $p,q,r \in \Z$, which is a bijection.
Since $i$ is surjective and $K^1(X)$ is isomorphic to $\Z^3$ by Lemma~\ref{KX}, the map $i$ is bijective.
Therefore, $e$ is also bijective.
\end{proof}

We identify $K^1(X)$ with $\Z^3$ through the isomorphism $e$,
and combining this with the decomposition (\ref{XtimesT}), we identify $K^0(X \times \T)$ with $\Z^6$.

Let $W_3 \colon K^1(X) \to \Z^4$ be the composite of the following maps:
\begin{equation*}
	K^1(X) \overset{i'}{\to} K^1(\SSS^3_{++}) \oplus K^1(\SSS^3_{-+}) \oplus K^1(\SSS^3_{--}) \oplus K^1(\SSS^3_{+-}) \overset{\cong}{\longrightarrow} \Z^4.
\end{equation*}
where the first map is provided by restriction homomorphisms $i' = i_{++}^* \oplus i_{-+}^* \oplus i_{--}^* \oplus i_{+-}^*$ and the second isomorphism is given by taking three-dimensional winding numbers for each of the four odd $K$-groups of the three-sphere.
Furthermore, let $\varphi'_1 \colon K^1(X) \to K^1(\SSS^2 \times \T)$ be the restriction homomorphism.
Through the isomorphism
$K^1(\SSS^2 \times \T) \cong K^1(\T) \oplus K^1_\cpt(\R^2 \times \T) \cong \Z \oplus \Z$,
the group $K^1(\SSS^2 \times \T)$ is identified with $\Z^2$.

\begin{lemma}
\label{lem3.20}
Let $p,q,r \in \Z$.
Based on the above identifications, the maps $W_3$ and $\varphi'_1$ are written as follows.
\begin{enumerate}
\item $W_3(p,q,r) = (p+q, -q+r, -r, -p)$.
\item $\varphi'_1(p,q,r) = (0,p+r)$.
\end{enumerate}
\end{lemma}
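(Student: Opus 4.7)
The plan is to trace what happens to each of the three local generators $e_i$ of $K^1(X)$ from Lemma~\ref{mape}, each supported in a small three-ball $B_i$, under the restriction maps in the statement, using the orientation bookkeeping already carried out in the proof of Lemma~\ref{mape} as a template.

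For part (1), first observe that $B_1 \subset \D_+ \times \T$ lies in $\SSS^3_{++}$ and $\SSS^3_{+-}$; $B_2 \subset \T \times \D_+$ lies in $\SSS^3_{++}$ and $\SSS^3_{-+}$; and $B_3 \subset \D_- \times \T$ lies in $\SSS^3_{-+}$ and $\SSS^3_{--}$. Only $B_1$ is contained in $\SSS^3_{+-}$, so the fourth coordinate of $W_3(e(p,q,r))$ is a signed multiple of $p$ alone. To fix the sign, I would apply the product rule $\partial(A \times B) = \partial A \times B + (-1)^{\dim A} A \times \partial B$ together with $\partial \D_- = -\T$ to compare the two induced orientations on $\D_+ \times \T$: as part of $\partial(\D_+ \times \D_+) = \SSS^3_{++}$ it is the canonical product orientation, whereas as part of $\partial(\D_+ \times \D_-) = \SSS^3_{+-}$ it is reversed. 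Since the first coordinate of $W_3$ already contains $+p$, the fourth coordinate must be $-p$, giving $W_3(p,q,r) = (p+q, -q+r, -r, -p)$.

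For part (2), note that $\SSS^2 \times \T = (\D_+ \times \T) \cup (\D_- \times \T)$ sits inside $X$ as the complement of the solid tori $\T \times \D_\pm$, so $B_1, B_3 \subset \SSS^2 \times \T$ while $B_2$ is disjoint from it, whence $\varphi'_1 e_2 = 0$. The classes $\varphi'_1 e_1$ and $\varphi'_1 e_3$ are represented by local generators in disjoint three-balls in $\SSS^2 \times \T$ whose subspace orientations agree with the product orientation; each therefore contributes $+1$ to the three-dimensional winding number in $K^1_\cpt(\R^2 \times \T) \subset K^1(\SSS^2 \times \T)$. Since any class with support disjoint from $\{\infty\} \times \T$ lies in the relative summand $K^1_\cpt(\R^2 \times \T)$, its projection to the $K^1(\T)$ summand vanishes, yielding $\varphi'_1(p,q,r) = (0, p + r)$.

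The main obstacle is the orientation comparison in part (1); once the sign conventions are settled, the rest amounts to a direct comparison of supports and pullbacks.
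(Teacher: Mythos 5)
Your proposal is correct and follows essentially the same route as the paper, whose proof of this lemma simply reduces it to the support-and-orientation bookkeeping already set up for the map $e$ in Lemma~\ref{mape}: you track which solid torus contains each ball $B_i$, compare the product orientation with the boundary orientation of $\D_\pm \times \D_\pm$ (respectively with the orientation of $\SSS^2 \times \T$), and note that a class supported in a small ball dies in the $K^1(\T)$ summand. The orientation comparison giving $-p$ in the fourth slot and the identification of which $B_i$ meet $\SSS^2 \times \T$ are exactly the points the paper's terse proof leaves implicit, and you have filled them in correctly.
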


\begin{proof}
The group $K^1(X)$ is identified with $\Z^3$ via the isomorphism $e$.
The results then follow as in Lemma~\ref{mape}.
\end{proof}

Similarly, following the notations in Lemma~\ref{S2S1S1}, we take $1$, $\thh_{12}$, $h_{34}$ and $\thh_{12}h_{34}$ as the generators of the group $K^0(\SSS^2 \times \T^2) \cong \Z^4$.
Each element of this group can then be represented by a four-tuple of integers.

\begin{lemma}
\label{lem3.21}
Let $\alpha_i$, $p,q,r \in \Z$.
Based on the above identifications, the maps $c_2$, $\varphi_1$ and $f_2 \circ \phi_1$ are written as follows.
\begin{enumerate}
\item $c_2(\alpha_1, \alpha_2, \alpha_3, p,q,r) = (p+q, -q+r, -r, -p)$.
\item $\varphi_1(\alpha_1, \alpha_2, \alpha_3, p,q,r) = (\alpha_1, \alpha_2, 0,p+r)$.
\item $f_2 \circ \phi_1 \bigl(\sum_{i=1}^{13}a_i x_i \bigl) = (a_1 + a_2, a_3 + 2 a_4, 0, a_{11} + 2a_{12})$.
\end{enumerate}
\end{lemma}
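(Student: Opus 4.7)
My plan is to compute each of the three maps along the K\"unneth-type splitting $K^0(Y \times \T) \cong K^0(Y) \oplus K^1(Y)$ that underlies the identifications of both $K^0(X \times \T)$ with $\Z^6$ and of $K^0(\SSS^2 \times \T^2)$ with $\Z^4$. Part~(3) will fall out from direct substitution, while parts~(1) and~(2) will reduce to Lemma~\ref{lem3.20} once I verify that $c_2$ and $\varphi_1$ respect the two splittings.

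For part~(3), I would first observe that the generators $x_i$ of $K^0_{\Z/2}(X \times \tT)$ were introduced (after Lemma~\ref{equivXT}) through the injection $\phi_1 \oplus \phi_2$, so $\phi_1$ simply reads off the first entry of each listed pair. The forgetful map $f_2$ sends $\uC_1$ to $\uC$ (so $t \mapsto 1$) and $L$ to the trivial line bundle (so $l_j \mapsto 0$), whence $1+t \mapsto 2$ and $1-t \mapsto 0$. Substituting these rules into the thirteen explicit expressions for $\phi_1(x_i)$ and collecting coefficients with respect to $1, \thh_{12}, h_{34}, \thh_{12}h_{34}$ yields precisely $(a_1+a_2,\,a_3+2a_4,\,0,\,a_{11}+2a_{12})$; the contributions from $x_5,\dots,x_{10},x_{13}$ vanish because each carries a factor $l_j$ or $1-t$.

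For part~(2), the inclusion $\SSS^2 \times \T^2 \hookrightarrow X \times \T$ is of the product form $(\SSS^2 \times \T \hookrightarrow X) \times \id_\T$, so $\varphi_1$ respects the splitting $K^0(Y \times \T) \cong K^0(Y) \oplus K^1(Y)$ on both sides. On the $K^0(X)$ summand $\varphi_1$ is the nonequivariant restriction $K^0(X) \to K^0(\SSS^2 \times \T)$; on our three generators $(1,1)$, $(\thh_{12},0)$, $(0,\thh_{23})$ it picks off the first factor and produces $(\alpha_1,\alpha_2)$ with zero coefficient of $h_{34}$. On the $K^1(X)$ summand $\varphi_1$ coincides with the map $\varphi_1'$ of Lemma~\ref{lem3.20}(2), whose output $(0,p+r) \in K^1(\SSS^2 \times \T)$, paired with the standard generator of $K^1(\T)$, contributes $0$ to the $h_{34}$-coordinate and $p+r$ to the $\thh_{12}h_{34}$-coordinate.

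For part~(1), I would first verify that $c_2$ vanishes on the $K^0(X)$ summand: any such class restricts on $\SSS^3_{\pm\pm} \times \T$ to the pullback of a class from the three-dimensional space $\SSS^3_{\pm\pm}$, whose second Chern number vanishes for dimensional reasons. Hence $c_2$ factors through the $K^1(X) \cong \Z^3$ summand, and on a class of the form $\beta \otimes u$ (with $\beta \in K^1(\SSS^3_{\pm\pm})$ and $u$ the generator of $K^1(\T)$) the second Chern number on $\SSS^3_{\pm\pm} \times \T$ equals the three-dimensional winding number of $\beta$. This identifies $c_2$ restricted to $K^1(X)$ with $W_3$ of Lemma~\ref{lem3.20}(1), giving $(p+q,-q+r,-r,-p)$. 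The most delicate step is the sign-check: the orientation conventions on $\D_\pm$, $\T$, and $\SSS^3_{\pm\pm}$ fixed before Lemma~\ref{mape} were chosen precisely so that the isomorphism $e$ is compatible with the orientations used to define the second Chern number, and the sign pattern of~(\ref{sign}) is inherited here verbatim.
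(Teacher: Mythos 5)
Your proposal is correct and follows essentially the same route as the paper: parts (1) and (2) are reduced to Lemma~\ref{lem3.20} via the splitting $K^0(Y\times\T)\cong K^0(Y)\oplus K^1(Y)$ (you additionally spell out why $c_2$ kills the $K^0(X)$ summand and why the second Chern number on $\SSS^3_{\pm\pm}\times\T$ recovers the three-dimensional winding number, which the paper leaves implicit), and part (3) is the same generator-by-generator computation of $f_2\circ\phi_1(x_i)$ using $t\mapsto 1$ and $l_j\mapsto 0$. The only cosmetic imprecision is that $x_6$ and $x_{13}$ die because their first entries are literally zero rather than because of an $l_j$ or $1-t$ factor, but the conclusion is unaffected.
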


\begin{proof}
(1) and (2) follow from Lemma~\ref{lem3.20}.
For (3), it is sufficient to compute the images of the generators $x_i$ of the group $K^0_{\Z/2}(X \times \tT)$ in Sect.~\ref{Sect3.3.7} under the map $f_2 \circ \phi_1$.
For example, the images of $x_{11}$ and $x_{12}$ are computed as follows:
\begin{equation*}
	f_2 \circ \phi_1(x_{11}) = f_2(\thh_{12}h_{34}) = \thh_{12}h_{34}, \ \ \
	f_2 \circ \phi_1(x_{12}) = f_2((1+t)\thh_{12} h_{34}) = 2 \thh_{12}h_{34}.
\end{equation*}
The other results follow in a similar manner.
\end{proof}

\subsection{Proof of Theorem~\ref{theorem3D}}
\label{Sect3.5}
Based on the above preliminaries, we provide a proof of Theorem~\ref{theorem3D}.
By using generators $x_i$ of the group $K^0_{\Z/2}(X \times \tT)$ in Sect.~\ref{Sect3.3.7}, we write the $K$-class $\eBloch$ in $K^0_{\Z/2}(X \times \tT)$ as $\sum_{i=1}^{13}a_i x_i$ where $a_i \in \Z$.
We further write $f_1(\eBloch)$ in $K^0(X \times \T)$ as $(\alpha_1, \alpha_2, \alpha_3, p, q, r)$ where $\alpha_i, p, q, r \in \Z$ by using the identification of $K^0(X \times \T)$ with $\Z^6$ in Sect.~\ref{Sect3.4}.
Regarding the symmetry-based indicator, by equation (\ref{mu3D}) and Lemma~\ref{02}, we have
\begin{equation*}
	\frac{1}{2}\mu_{3D}(H) = \frac{1}{2} \mu_{3D} \circ \psi_1 \circ \phi_1 (\eBloch)
		 = \sum_{i=1}^{13}\frac{a_i}{2}\mu_{3D} \circ \psi_1 \circ \phi_1(x_i) = a_{11}.
\end{equation*}
For the spectral flow of quarter-plane Toeplitz operators, by equation (\ref{sf}) and Lemma~\ref{lem3.21}, we have
\begin{equation*}
	(\sf_a(H), \sf_b(H), \sf_c(H), \sf_d(H)) = c_2 \circ f_1 (\eBloch) = (p+q, -q+r, -r, -p).
\end{equation*}
Given the commutativity of the diagram in Fig.~\ref{diagram3D} and Lemma~\ref{lem3.21}, the following are the same in the group $K^0(\SSS^2 \times \T^2)$:
\vspace{-2mm}
\begin{align*}
	f_2 \circ \phi_1 (\eBloch) &= f_2 \circ \phi_1 \Big(\sum_{i=1}^{13}a_i x_i \Big) = (a_1+a_2, a_3+2a_4, 0, a_{11}+2a_{12} ), \\
	\varphi_1 \circ f_1 (\eBloch) &= \varphi_1(\alpha_1, \alpha_2, \alpha_3, p, q, r) = (\alpha_1, \alpha_2,0, p+r).
\end{align*}
Figure~\ref{diagram3D2} below shows the correspondence of the elements.
Therefore,
\begin{equation}
\label{keyrelation}
	a_{11}+2a_{12} = p+r,
\end{equation}
holds.
By relation (\ref{keyrelation}), the following holds in $\Z/2$:
\begin{equation*}
\label{keycomp}
\begin{aligned}
	\frac{1}{2}\mu_{3D}(H) &= a_{11} = a_{11} + 2a_{12} = p+r \\
	&= (p+q) + (-q+r) = \sf_a(H) + \sf_b(H) \mod 2,
\end{aligned}
\end{equation*}
which completes the proof.
\qed

\begin{figure}[htbp]
\centering
\begin{tabular}{c}
\xymatrix{
\eBloch = \sum\limits_{i=1}^{13}a_i x_i \ar@{{|}->}[r]^{\phi_1} \ar@{{|}->}[d]^{f_1}
	& \sum\limits_{i=1}^{13}a_i \phi_1(x_i) \ar@{{|}->}[r]^{\hspace{3mm} \frac{1}{2}\mu_{3D} \circ \psi_1} \ar@{{|}->}[d]^{f_2}
		& a_{11} \\
(\alpha_1, \alpha_2, \alpha_3, p,q,r) \ar@{{|}->}[r]^{\varphi_1 \hspace{10mm}} \ar@{{|}->}[d]^{c_2}
	& \txt{$(a_1+a_2, a_3+2a_4, 0, a_{11}+2a_{12})$\\ $=(\alpha_1, \alpha_2,0, p+r) \hspace{2cm}$} & \\
(p+q, -q+r, -r, -p) & &
}
\end{tabular}
\caption{The correspondence of the elements contained in Fig.~\ref{diagram3D}}
\label{diagram3D2}
\end{figure}

\section{Bulk-corner correspondence}
\label{Sect4}
In this section, we consider two-dimensional systems and discuss the bulk-corner correspondence for inversion-symmetric second-order topological insulators that preserve chiral symmetry.

Let $N$ be a positive integer, and
let $H \colon \T^2 \to M(N, \C)$ be a two-variable rational matrix function with poles off $\T^2$.
We assume that $H$ takes values in Hermitian matrices.
We also write $H$ for the multiplication operator on the Hilbert space $l^2(\Z^2, \C^N)$ generated by $H$.
As in Sect.~\ref{Sect3}, we call $H$ the bulk Hamiltonian.
Let $\Pi$ and $I$ be $N$-by-$N$ Hermitian unitary matrices that anticommute, that is, $\Pi I = - I \Pi$.
We further assume that our Hamiltonian preserves both the inversion and chiral symmetries.
The symmetries of matrix-valued functions are as follows:
\begin{equation*}
	I H(z, w) I^* = H(\bar{z}, \bar{w}), \ \ \ \Pi H(z, w) \Pi^* = - H(z, w),
\end{equation*}
for any $(z,w)$ in $\T^2$.
We first assume that the matrix-valued function $H$ takes values in invertible matrices; that is, we assume that the bulk Hamiltonian is gapped at zero energy.
Let $\Gamma^2_i$ $(i=1,2,3,4)$ be the four fixed points of the involution on $\tT^2$.
As in Sect.~\ref{Sect3}, we introduce the bulk topological invariant and formulate the bulk-corner correspondence as follows \cite{TTM20a,OTY21,SO21}.
\begin{definition}[Symmetry-based indicator]
\label{mu2D}
$\mu_{2D}(H) = - \sum_{i=1}^4 n_-(\Gamma^2_i) \ \mod 4$.
\end{definition}
As in Remark~\ref{remark3.1}, the nonnegative integer $n_-(\Gamma^2_i)$ counts the number of occupied states with odd parities at the time-reversal invariant momentum $\Gamma^2_i$ with respect to the inversion symmetry operator $I$.
Let $H_0$ and $H_1$ be two such bulk-gapped Hamiltonians, and let $\eta_j = H_j |H_j|^{-1}$ be gradations on $\uC^N = \T^2 \times \C^N$.
Let us take $\gamma = \Pi$ and $\rho = \sqrt{-1}I \Pi$; then, $\eta_j$, $\rho$ and $\gamma$ anticommutes for each $j=0,1$.
Therefore, the quadruple $(\uC^N, \eta_j, \rho, \gamma)$ is a $c$-twisted vector bundle with $Cl_{0,1}$-action where $c = \id_{\Z/2}$ in Sect.~\ref{Sect2.3}, and the triple $(\uC^N, \eta_0, \eta_1)$ gives an element of the group $K^1_\pm(\tT^2)$.
The symmetry-based indicator provides the group homomorphism\footnote{Utilizing the notation of Remark~\ref{action}, we set $\chi = I$ and $\gamma = \Pi$. We then adjust the $\Z/2$-action $\chi$ to $\rho$ to define an element in our $K^1_\pm$-group.
In our restriction homomorphism (\ref{restriction}), the $\Z/2$-action on a vector bundle is reverted from $\rho$ to $\chi$ to define an element in the representation ring $R$.}
\begin{equation*}
	\mu_{2D} \colon K^1_{\pm}(\tT^2) \to \Z/4
\end{equation*}
given by $\mu_{2D} ([\uC^N, \eta_0, \eta_1]) = \mu_{2D} (H_0) - \mu_{2D} (H_1)$.
To study $\mu_{2D}$, we examine its relation with $\mu_{3D}$ in Sect.~\ref{Sect3} by using the map $t$ in Proposition~\ref{mapt}.
Since $K^0_{\Z/2}(\tT^2 \times J, \tT^2 \times \partial J)$ is isomorphic to $K^0_{\Z/2}(\tT^3, \tT^2 \times \{-1\})$, the inclusion $(\tT^3, \varnothing) \to (\tT^3, \tT^2 \times \{-1\})$ induces an injection
$j^* \colon  K^0_{\Z/2}(\tT^2 \times J, \tT^2 \times \partial J) \to K^0_{\Z/2}(\tT^3)$.
Composing it with the map $t_{\tT^2}$ in (\ref{t}), we obtain the injection
\begin{equation*}
	j^* \circ t_{\tT^2} \colon K^1_{\pm}(\tT^2) \to K^0_{\Z/2}(\tT^3).
\end{equation*}
Note that the group $K^1_{\pm}(\tT^2)$ is isomorphic to $\Z^6$, and that the image of the injection $j^* \circ t_{\tT^2}$ is generated by $h_{13}, th_{13}, h_{23}, th_{23}, l_3$ and $h_{12} l_3$ \cite[Proposition~4.12]{Gomi15}.
By the definition of our restriction homomorphism (\ref{restriction}) and the symmetry-based indicators, the following diagram commutes (see identification (\ref{eq4}) and Remarks~\ref{rem2.5} and ~\ref{action}):
\[\xymatrix{
K^1_\pm(\tT^2) \ar[rr]^{j^* \circ t_{\tT^2}} \ar[rd]_{\mu_{2D}}& & K^0_{\Z/2}(\tT^3) \ar[dl]^{\mu_{3D}}
\\
& \Z/4 &
}\]
By the commutativity of this diagram and Lemma~\ref{symindeven}, the following holds.

\begin{lemma}
The map $\mu_{2D}$ takes values of $0$ and $2$.
\end{lemma}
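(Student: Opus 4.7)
The plan is to read the statement off the commutative triangle displayed immediately above the lemma, which gives the factorization $\mu_{2D} = \mu_{3D} \circ (j^* \circ t_{\tT^2})$. Granted this factorization, the image of $\mu_{2D}$ is contained in the image of $\mu_{3D} \colon K^0_{\Z/2}(\tT^3) \to \Z/4$, and by Lemma~\ref{symindeven} that image lies in $\{0, 2\} \subset \Z/4$. The conclusion is then immediate.

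What really needs to be checked is the commutativity of that triangle, so the bulk of the verification is to evaluate both compositions on a representative $[\uC^N, \eta_0, \eta_1]$ with $\eta_j = H_j|H_j|^{-1}$. Following the explicit formula for $t_{\tT^2}$ in \eqref{t}, this triple is sent to the pair of gradations $\zeta_j(z,w,\theta) = \eta_j(z,w)\cos\theta - \Pi \sin\theta$ on $\tT^2 \times J$, together with the modified $\Z/2$-action $\chi = \sqrt{-1}\,\Pi\rho$ discussed in Remark~\ref{action}. Composing with $j^*$ yields a class in $K^0_{\Z/2}(\tT^3)$ whose restriction to each fixed point of the involution on $\tT^3$ records, via the identification \eqref{eq4} and Remark~\ref{rem2.5}, the parities of the valence bands of the two bulk Hamiltonians at the corresponding fixed point $\Gamma^2_i$ of $\tT^2$. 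Applying $\mu_{3D}$ then reproduces exactly the signed sum of odd-parity multiplicities that Definition~\ref{mu2D} attaches to $\mu_{2D}(H_0) - \mu_{2D}(H_1)$.

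The main (and really only) obstacle is the bookkeeping required to identify the eight fixed points of the involution on $\tT^3$ with the four points $\Gamma^2_i$ (each appearing once over $\theta = \pi/2$ and once over $\theta = -\pi/2$) and to track the replacement of the genuine $\Z/2$-action $\rho$ by $\chi$, as flagged in Remark~\ref{action}. Once this matching is in place, the containment $\mu_{2D}\bigl(K^1_\pm(\tT^2)\bigr) \subseteq \{0,2\}$ follows at once from Lemma~\ref{symindeven}; no Wiener--Hopf factorization, partial-index computation, or Mayer--Vietoris argument is needed at this step.
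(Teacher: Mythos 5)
Your argument is exactly the paper's: the lemma is deduced from the commutativity of the displayed triangle $\mu_{2D} = \mu_{3D}\circ j^*\circ t_{\tT^2}$ together with Lemma~\ref{symindeven}, and your verification of that commutativity is precisely what the paper asserts from the definitions of the restriction homomorphism and the symmetry-based indicators. One small correction to your bookkeeping: the eight $\Z/2$-fixed points of $\tT^3$ lie over $\theta=0$ and over the collapsed boundary point $[\partial J]$ (where $\zeta_0=\zeta_1$, so that contribution vanishes), not over $\theta=\pi/2$ and $\theta=-\pi/2$ separately, since those two endpoints are interchanged by the involution rather than fixed.
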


Therefore, we obtain the group homomorphism $\frac{1}{2}\mu_{2D} \colon K^1_{\pm}(\tT^2) \to \Z/2$.
The map $\frac{1}{2}\mu_{2D}$ is surjective\footnote{This is also confirmed by an example in Sect.~\ref{Sect5.1}.}, which follows from the computations in Sect.~\ref{Sect3}.

We next introduce corner topological invariants.
For $i = 1,2,3,4$ and $j = a,b,c,d$, let $H^i = T^i_{H}$ be a half-plane Toeplitz operator and let $H^j = T^j_{H}$ be a quarter-plane Toeplitz operator.
These half-plane and quarter-plane Toeplitz operators are our models of the four edges and the four corners of rectangle-shaped systems, respectively.
As in Sect.~\ref{Sect3}, we assume the following spectral gap condition in this section.

\begin{assumption}[Spectral gap condition]
\label{assumption2D}
For $i = 1,2,3,4$, half-plane Toeplitz operators $H^i$ are invertible.
\end{assumption}

Under Assumption~\ref{assumption2D}, four quarter-plane Toeplitz operators $H^j$ ($j=a,b,c,d$) are self-adjoint Fredholm operators \cite{DH71}, which are odd in the sense that each operator $H^j$ anticommutes with $\Pi$.
For $j = a,b,c,d$, we write $\ind_j(H)$ for the Fredholm index of the off-diagonal part of $H^j$; that is,
$\ind_j(H) = \mathrm{Tr}(\Pi|_{\Ker(H^j)})$.
These Fredholm indices account for the number of corner states at each of the four corners \cite{Hayashi3}.
The inversion symmetry interchanges corners $a$ with $c$ and $b$ with $d$, respectively.
Since the chiral symmetry operator $\Pi$ anticommutes with the inversion symmetry operator $I$, the following relations hold:
\begin{equation*}
	\ind_a(H) = - \ind_c(H), \ \ \ind_b(H) = - \ind_d(H).
\end{equation*}

The following is the main theorem in this section.

\begin{theorem}[Bulk-corner correspondence]
\label{theorem2D}
Under Assumption~\ref{assumption2D}, the following equality holds:
\begin{equation*}
	\frac{1}{2}\mu_{2D}(H) = \ind_a(H) + \ind_b(H) \ \mod 2.
\end{equation*}
\end{theorem}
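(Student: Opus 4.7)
The plan is to run the argument of Theorem~\ref{theorem3D} in $K^1_\pm$-theory instead of $K^0_{\Z/2}$-theory, exploiting the decomposition $K^0_{\Z/2}(X \times \tT) \cong K^0_{\Z/2}(X) \oplus K^1_\pm(X)$ identified in Remark~\ref{rem3.14}. First, by combining Lemmas~\ref{extension2} and~\ref{extension3} applied to each of the two torus variables of $H$, I would extend the 2D bulk Hamiltonian to a continuous nonsingular Hermitian matrix-valued function $H^E \colon X \to GL(N,\C)$ preserving both the chiral symmetry $\Pi H^E \Pi^* = -H^E$ and the equivariance $IH^E(z,w)I^* = H^E(z^{-1},w^{-1})$. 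Assumption~\ref{assumption2D} supplies the triviality of the left and right partial indices needed for these Wiener--Hopf factorizations. With the $\Z/2$-action $\rho = \sqrt{-1}I\Pi$ and Clifford action $\gamma = \Pi$, the extension $H^E$ defines a class $[H^E] \in K^1_\pm(X)$, which via the embedding of Remark~\ref{rem3.14} into $K^0_{\Z/2}(X \times \tT)$ can be written as $[H^E] = \sum_{i=8}^{13}a_i x_i$ for some integers $a_i$.

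Next, I would consider the $K^1_\pm$-analogue of the diagram in Figure~\ref{diagram3D}: the top row
\[K^1_\pm(X) \to K^1_\pm(\tS^2 \times \tT) \to K^1_\pm(\tT^2) \xrightarrow{\tfrac{1}{2}\mu_{2D}} \Z/2\]
computing the bulk invariant, and a vertical column $K^1_\pm(X) \to K^1(X) \to \Z^3$ (forgetting the $\Z/2$-action, then restricting to the three-sphere pieces $\SSS^3_{\pm\pm}$ via Lemma~\ref{mape}) computing the four corner Fredholm indices. Because every computation in Sections~\ref{Sect3.3} and~\ref{Sect3.4} is compatible with the decomposition of Remark~\ref{rem3.14}, restricting them to the $K^1_\pm$-summand provides the three needed ingredients: (i) by Lemma~\ref{02} together with the commutative $\mu_{2D}$/$\mu_{3D}$ diagram established at the start of this section, the top row sends $[H^E]$ to $a_{11} \bmod 2$; (ii) by Lemmas~\ref{mape} and~\ref{lem3.20} together with the index formula of~\cite{Hayashi5}, the vertical column identifies $f_1([H^E]) \in K^1(X)$ with a triple $(p,q,r) \in \Z^3$ whose image under $W_3$ is $(\ind_a(H), \ind_b(H), \ind_c(H), \ind_d(H)) = (p+q, -q+r, -r, -p)$; (iii) commuting through $K^1(\SSS^2 \times \T)$ via Lemma~\ref{lem3.21}(3) restricted to $x_8, \ldots, x_{13}$ yields the relation $a_{11} + 2a_{12} = p + r$, the direct analogue of~(\ref{keyrelation}). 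Reducing modulo two gives
\[
\tfrac{1}{2}\mu_{2D}(H) = a_{11} \equiv p + r = (p+q) + (-q+r) = \ind_a(H) + \ind_b(H) \pmod{2},
\]
as claimed.

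The main obstacle is step (ii): verifying that the composite $K^1_\pm(X) \to K^1(X) \xrightarrow{W_3} \Z^4$ applied to $[H^E]$ really delivers the four corner Fredholm indices $\ind_j(H)$ of the quarter-plane Toeplitz operators of~\cite{Hayashi3}, and not some rescaled or shifted tuple. This is the chirally symmetric analogue of the winding-number-to-spectral-flow identification used in the 3D case (based on~\cite{Hayashi5}), and it reduces to the classical equivalence between the three-dimensional winding number of the Wiener--Hopf extension on a component $\SSS^3_{\pm\pm} \subset X$ and the Fredholm index of the off-diagonal part of the corresponding chirally symmetric quarter-plane Toeplitz operator. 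Making this identification explicit --- including the sign conventions inherited from the orientation choice~(\ref{sign}) --- is the only substantial new calculation required, with the remainder of the proof following by restricting the 3D arguments to the $K^1_\pm$-summand.
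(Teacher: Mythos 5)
Your proposal follows essentially the same route as the paper: extend $H$ to a symmetric nonsingular $H^E$ on $X$ via Wiener--Hopf factorizations, regard the resulting class in $K^1_\pm(X)$ as the direct summand of $K^0_{\Z/2}(X\times\tT)$ spanned by $x_8,\dots,x_{13}$ (Remark~\ref{rem3.14}), and run the restriction of the three-dimensional diagram to that summand, arriving at $a_{11}+2a_{12}=p+r$ and reducing mod $2$. The one point you gloss over is that in Karoubi's formulation an element of $K^1_\pm(X)$ is a triple with a \emph{pair} of gradations, so ``$[H^E]$'' is not defined by $H^E$ alone; the paper takes $(\uC^N,\, H^E|H^E|^{-1},\, \epsilon^E)$ with $\epsilon$ the constant gradation $I$, and the final identities rely on both $\mu_{2D}(\epsilon)=0$ and the vanishing of the corner indices of $\epsilon$, so this reference gradation should be made explicit. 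Your ``main obstacle'' (ii) is exactly the step the paper delegates to the index formula of \cite{Hayashi5}, so no new calculation is required there beyond that citation.
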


\begin{proof}
Figure~\ref{diagram2D} presents the key diagram for the proof.
\begin{figure}[htbp]
\centering
\begin{tabular}{c}
\xymatrix{
K^1_\pm(X) \ar[r]^{\phi'_1 \hspace{3mm}} \ar[d]^{f'_1} & K^1_\pm(\tS^2 \times \tT) \ar[r]^{\hspace{3mm} \psi'_1} \ar[d]^{f'_2} & K^1_\pm(\tT^2) \ar[r]^{\hspace{3mm} \frac{1}{2}\mu_{2D}} & \Z/2\\
K^1(X) \ar[r]^{\varphi_1' \hspace{3mm}} \ar[d]^{W_3} & K^1(\SSS^2 \times \T) & &\\
\Z^4 & & &}
\vspace{2mm}
\\
\xymatrix{
\sum\limits_{i=8}^{13}a_i x_i \ar@{{|}->}[r]^{\phi'_1 \hspace{2mm}} \ar@{{|}->}[d]^{f'_1}
	& \sum\limits_{i=8}^{13}a_i \phi'_1(x_i) \ar@{{|}->}[rr]^{\hspace{10mm} \frac{1}{2}\mu_{2D} \circ \psi'_1 \hspace{5mm}} \ar@{{|}->}[d]^{f'_2}
		& & a_{11} \\
(p,q,r) \ar@{{|}->}[r]^{\varphi'_1 \hspace{5mm}} \ar@{{|}->}[d]^{W_3}
	& \txt{$(0, a_{11}+2a_{12})$\\ $=(0, p+r) \hspace{3mm}$} & & \\
(p+q, -q+r, -r, -p) & & &}
\end{tabular}
\caption{The key diagram for Theorem~\ref{theorem2D} and the correspondence of the elements}
\label{diagram2D}
\end{figure}
In this diagram, $f'_1$ and $f'_2$ are forgetful maps,
$\phi'_1$, $\varphi'_1$ and $\psi'_1$ are restriction homomorphisms, and the map $W_3$ is that introduced in Sect.~\ref{Sect3.4}.
Note that $\psi'_1 = (k \times 1)^*$ as in Sect.~\ref{Sect3.3.6}, and that the map $W_3$ is computed in Lemma~\ref{lem3.20}.
By the decomposition in (\ref{isom}), for a compact Hausdorff $\Z/2$-space $Y$, we have
\begin{equation*}
	K^0_{\Z/2}(Y \times \tT) \cong K^0_{\Z/2}(Y) \oplus K^0_{\Z/2}(Y \times J, Y \times \partial J) \cong K^0_{\Z/2}(Y) \oplus K^1_\pm(Y).
\end{equation*}
The second isomorphism is provided by the isomorphism $t_Y$ in Proposition~\ref{mapt}.
Combined with its nonequivariant analog decomposition $K^0(Y \times \T) \cong K^0(Y) \oplus K^1(Y)$, it follows that the diagram in Fig.~\ref{diagram2D} is contained in the diagram of Fig.~\ref{diagram3D} as a direct summand.

As in Sect.~\ref{Sect3.2}, under Assumption~\ref{assumption2D}, the extension
$H^E \colon X \to GL(N,\C)$ of the matrix-valued function $H$ on $\tT^2$ that takes values in Hermitian matrices is canonically associated.
By Lemmas~\ref{extension2} and \ref{extension3}, this extension satisfies
\begin{equation*}
	\Pi H^E(z,w) \Pi^* = - H^E(z,w), \ \ \
	I H^E(z,w) I^* = H^E(z^{-1}, w^{-1}),
\end{equation*}
for any $(z,w)$ in $X$.
Let us consider the constant matrix-valued function on $\tT^2$ that takes values in the matrix $I$.
This matrix-valued function preserves the inversion and chiral symmetries, and we write $\epsilon$ for this matrix-valued function.
The triple $(\uC^N, H^E|H^E|^{-1}, \epsilon^E)$ is a triple in Sect.~\ref{Sect2.3} that defines an element of $K^1_\pm(X)$.
By Remark~\ref{rem3.14}, the group $K^1_\pm(X)$, considered as a subgroup of $K^0_{\Z/2}(X \times \tT)$, is generated by $x_i$ for $i=8,9,\ldots,13$.
Therefore, the element
\begin{equation}
\label{element2D}
	[\uC^N, H^E|H^E|^{-1}, \epsilon^E] \in K^1_\pm(X)
\end{equation}
is written as $\sum_{i=8}^{13}a_i x_i$, where $a_i \in \Z$.
We also write $(p,q,r)$ for the image of this element (\ref{element2D}) by the map $f'_1$, identifying $K^1(X)$ with $\Z^3$ as in Sect.~\ref{Sect3.4}.
As in the proof of Theorem~\ref{theorem3D}, the correspondence of the element (\ref{element2D}) in our key diagram, shown in Fig.~\ref{diagram2D}, follows from the computations in Sect.~\ref{Sect3.3} and Lemma~\ref{lem3.20}.

Since the Fredholm index for the odd self-adjoint quarter-plane Toeplitz operator associated with $\epsilon$ is trivial,
the image of the above $K$-class (\ref{element2D}) under vertical maps $f'_1$ and $W_3$ corresponds to the Fredholm indices of the off-diagonal part of the quarter-plane Toeplitz operators associated with $H$ \cite{Hayashi5}; that is,
\begin{align*}
	W_3 \circ f'_1 ([\uC^N, H^E|H^E|^{-1}, \epsilon^E]) &= (\ind_a(H), \ind_b(H), \ind_c(H), \ind_d(H))\\
	&= (p+q, -q+r, -r, -p).
\end{align*}
Since the symmetry-based indicator of $\epsilon$ is also trivial, it follows that
\begin{align*}
	\frac{1}{2}\mu_{2D}(H) &= \frac{1}{2}( \mu_{2D}(H) - \mu_{2D}(\epsilon)) = a_{11} = a_{11} + 2a_{12}\\
		&= p+r = \ind_a(H) + \ind_b(H) \mod 2
\end{align*}
in $\Z/2$, which completes the proof.
\end{proof}

\begin{remark}
In \cite[Appendix B]{SO21}, the symmetry-based indicator is defined as
$z_{4,I} = \sum_{i=1}^4\frac{n_+(\Gamma_i) - n_-(\Gamma_i)}{2} \pmod4$.
Although this $z_{4,I}$ and $\mu_{2D}$ can be different for a single Hamiltonian (see Sect.~\ref{Sect5.1}, for an example), they induce the same map from the group $K^1_\pm(\T^2)$ to $\Z/4$.
Our choice $\mu_{2D}$ in Definition~\ref{mu2D} is simply due to the relation $\mu_{2D}(\epsilon) = 0$, which leads to the formula in Theorem~\ref{theorem2D}.
\end{remark}

\begin{remark}
\label{rem4.6}
While Theorems~\ref{theorem3D} and \ref{theorem2D} require the spectral gap condition for both the bulk and surface/edge models, the bulk invariants are defined solely by the bulk Hamiltonians.
In the three-dimensional case, for example, if a surface gap closes, the symmetry-based indicator remains unchanged as long as the bulk gap remains open, although the spectral flow is not defined.
When the symmetry-based indicator is two in $\Z/4$, opening the surface gap leads to the appearance of hinge states.
Note that the locations of the hinge states may change by the surface gap closing (it is stable as long as the surface gap remains open \cite{Hayashi2}).
For a more comprehensive account of the correspondence, we refer the reader to \cite{OPS24a}.
\end{remark}

\section{Examples}
\label{Sect5}
In this section, we discuss nontrivial examples of Theorems~\ref{theorem2D} and \ref{theorem3D}.
We start with a two-dimensional example in Sect.~\ref{Sect5.1} and then provide a three-dimensional example in Sect.~\ref{Sect5.2}.

\subsection{Two-dimensional example}
\label{Sect5.1}
Let $h \colon \T^2 \to GL(3,\C)$ be the following matrix-valued function.
\begin{equation*}
	h(z,w) =
{\renewcommand{\arraystretch}{1.2}
\begin{pmatrix}
zw + w & z - \frac{1}{2} & 0 \\
z - \frac{1}{2} & zw^{-1}+ \frac{1}{2}w^{-1} & 1 \\
0 & 1 & z^{-1} \\
\end{pmatrix}.}
\end{equation*}
We consider the following two-dimensional Hermitian Hamiltonian:
\begin{equation*}
	H(z,w)=
\begin{pmatrix}
0 & h(z,w)^* \\
h(z,w) & 0 \\
\end{pmatrix},
\end{equation*}
whose chiral and inversion symmetries are provided by the following matrices:
\begin{equation*}
\Pi =
\begin{pmatrix}
1_3 & 0 \\
0 & -1_3 \\
\end{pmatrix},
\ \
I =
\begin{pmatrix}
0 & 1_3 \\
1_3 & 0 \\
\end{pmatrix},
\end{equation*}
where $1_3$ denotes the three-by-three identity matrix.
The Hamiltonian $H$ satisfies the chiral and inversion symmetries.

We first show that our Hamiltonian $H$ satisfies Assumption~\ref{assumption2D}, which states that the associated half-plane Toeplitz operators $H^i$ $(i=1,2,3,4)$ are invertible.
This is equivalent to the invertibility of its off-diagonal part $T^i_{h}$ and therefore to the invertibility of the corresponding family of Toeplitz operators (e.g., $\{T^1_{h(\cdot,w)}\}_{w \in \T}$).
We confirm this by computing Wiener--Hopf factorizations of the symbols of the Toeplitz operators and verifying that their partial indices are trivial (see Sect.~\ref{Sect2.2}).
We employ an algorithm from \cite{GKS03} to obtain the factorizations and present the results.
A right Wiener--Hopf factorization concerning the $z$ variable (in the $x$-direction) is as follows:
$$h(z,w) = h^x_-(z,w) h^x_+(z,w),$$
where (we write $\alpha = \sqrt{25-16w+4w^2}$ to simplify the notations)
\begin{equation*}
	h^x_-(z,w) =
\begin{pmatrix}
1 & 0 & 0\\
0 & 1 & 0\\
\frac{-1+2w+\alpha}{6zw} & z^{-1} & 1 + \frac{-5+2w+\alpha}{4zw}
\end{pmatrix},
\end{equation*}
\begin{equation*}
	h^x_+(z,w) =
{\renewcommand{\arraystretch}{1.2}
\begin{pmatrix}
(1+z)w & -\frac{1}{2}+z & 0\\
-\frac{1}{2}+z & \frac{1+2z}{2w} & 1\\
\frac{-5-2w-\alpha}{6} & \frac{-5+4w-\alpha}{6w} & 0
\end{pmatrix}.}
\end{equation*}
This is a right canonical factorization for any $w$ in $\T$; therefore, the half-plane Toeplitz operator $T_h^1$ (and hence $H^1$) is invertible.
A right Wiener--Hopf factorization concerning the $w$ variable (in the $y$-direction) is as follows:
$$h(z,w) = h^y_-(z,w) h^y_+(z,w),$$
where
\begin{equation*}
	h^y_-(z,w) =
\begin{pmatrix}
1 & 0 & 0\\
\frac{1+2z}{(-1+2z)w} & 1 & 0\\
0 & 0 & 1
\end{pmatrix},
\ \
	h^y_+(z,w) =
\begin{pmatrix}
(1+z)w & -\frac{1}{2}+z & 0\\
\frac{1+10z}{2-4z} & 0 & 1\\
0 & 1 & z^{-1}
\end{pmatrix}.
\end{equation*}
This is a right canonical factorization for any $z$ in $\T$; therefore, the half-plane Toeplitz operator $T_h^2$ (and hence $H^2$) is invertible.
By inversion symmetry, $H^3$ and $H^4$ are also invertible (see Remark~\ref{reminv}), and Assumption~\ref{assumption2D} is satisfied.

We next compute the symmetry-based indicator $\mu_{2D}(H)$.
To simplify computations, we deform the Hamiltonian preserving the bulk gap and symmetry.
For $0 \leq r \leq 1$, let
\begin{equation*}
	h_r(z,w) =
{\renewcommand{\arraystretch}{1.2}
\begin{pmatrix}
zw + w & z - \frac{1+r}{2} & 0 \\
z - \frac{1+r}{2} & zw^{-1} + \frac{1+r}{2}w^{-1} + r & 1 \\
0 & 1 & z^{-1} \\
\end{pmatrix}.}
\end{equation*}
Each $h_r$ is a nonsingular matrix-valued function on $\T^2$.
We set
\begin{equation*}
	H_r(z,w)=
\begin{pmatrix}
0 & h_r(z,w)^* \\
h_r(z,w) & 0 \\
\end{pmatrix}.
\end{equation*}
Then, $H_0 = H$, and each $H_r$ satisfies the inversion and chiral symmetries given by $I$ and $\Pi$, respectively.
Since $H_r$ provides a homotopy between $H$ and $H_1$ preserving the bulk gap, the symmetry-based indicator for $H_1$ is the same as that for $H$.

\begin{itemize}
\item The negative eigenvalues of $H_1(1,1)$ are $-2 \pm \sqrt2, -2$, and its eigenvectors are
$^t\!(0, -1 \pm \sqrt{2}, -1, 0, 1 \mp \sqrt{2}, 1)$, $^t\!(-1,0,0,1,0,0)$ where the double sign corresponds.
Their parities are odd; therefore, $n_-(1,1)=3$.
\item The negative eigenvalues of $H_1(-1,1)$ are $\pm1-\sqrt{3},-2$, and its eigenvectors are $^t\!(\mp 1+\sqrt{3},\pm 2 - \sqrt{3},\pm1, -1 \pm \sqrt{3},2\mp \sqrt{3},1)$ and $^t\!(1,1,-1,1,1,-1)$, where the double sign correspons.
Therefore, $n_-(-1,1)=1$.
\item The negative eigenvalues of $H_1(1,-1)$ are $-2, -\sqrt{2}$, where $-\sqrt{2}$ is doubly degenerated.
Their eigenvectors are $^t\!(1,0,0,1,0,0)$, $^t\!(0, -1-\sqrt{2},1,0,-1-\sqrt{2},1)$, and $^t\!(0,-1+\sqrt{2},1,0,1-\sqrt{2},-1)$.
Therefore, $n_-(1,-1)=1$.
\item Since $H_1(-1,-1) = H_1(-1,1)$, we also have $n_-(-1,-1)=1$.
\end{itemize}
Therefore, $\frac{1}{2}\mu_{2D}(H) = \frac{1}{2}\mu_{2D}(H_1) = -3 \equiv 1 \pmod2$.
It follows from Theorem~\ref{theorem2D} that $\ind_a(H) + \ind_b(H) \equiv 1 \pmod 2$,
and topological corner states appear for our model $H$.

\subsection{Three-dimensional example}
\label{Sect5.2}
A three-dimensional example can be obtained by using the triples $H$, $\Pi$, and $I$ in the two-dimensional case of Sect.~\ref{Sect5.1} as follows.
Let $\epsilon$ be the matrix $I$.
We consider the following continuous matrix-valued function $H' \colon \T^3 \to GL(6,\C)$:
\begin{equation*}
H'(z, w, \theta) =
\left\{
\begin{aligned}
	H(z, w) \cos \theta - \Pi \sin \theta, & \hspace{3mm} \text{if} \ \ -\tfrac{\pi}{2} \leq \theta \leq \tfrac{\pi}{2},\\
	\epsilon \cos \theta - \Pi \sin \theta, & \hspace{3mm} \text{if} \ \ \ \tfrac{\pi}{2} \leq \theta \leq \tfrac{3\pi}{2}.
\end{aligned}
\right.
\end{equation*}
The inversion symmetry for $H'$ is given by the matrix $I$.
Since $H$ satisfies Assumption~\ref{assumption2D} and $\Pi$ anticommutes with $H$ and $I$, $H'$ satisfies Assumption~\ref{assumption3D}.
The eight $\Z/2$-fixed points on $\tT^3$ are $\{ \pm1, \pm1, 0 \}$ and $\{ \pm 1, \pm1, \pi \}$, on which $H'(\pm 1, \pm1, 0) = H(\pm1, \pm1)$ and $H'(\pm 1, \pm1, \pi) = -\epsilon$.
Therefore, utilizing the computations from the two-dimensional example in Sect.~\ref{Sect5.1}, we compute $\frac{1}{2}\mu_{3D}(H') = 1$.
From Theorem~\ref{theorem3D}, we find that $\sf_a(H') + \sf_b(H') = 1 \pmod 2$.

To apply Theorem~\ref{theorem3D} to other models, we need to verify Assumption~\ref{assumption3D}, even though the form of the Wiener--Hopf factorization may be much more complicated.
The applications of the method presented in this paper require further investigation, and we simply provide computable examples.

\subsection*{Acknowledgments}

The author would like to thank Takeshi Nakanishi and Ryo Okugawa for their valuable discussions.
He also thanks Ryo Takahashi for helpful information concerning symmetry-based indicators.
This work was supported by JSPS KAKENHI (Grant No. JP23K12966) and JST PRESTO (Grant No. JPMJPR19L7).

\subsection*{Declarations}

\subsection*{Funding}
This work was supported by JSPS KAKENHI (Grant No. JP23K12966) and JST PRESTO (Grant No. JPMJPR19L7).

\subsection*{Competing interests}
The author does not have any other competing interests to declare.

\subsection*{Data availability}
This article does not have any external supporting data.


\end{document}